\newcommand{\C}{\mathbb{C}}
\newcommand{\R}{\mathbb{R}}
\newcommand{\Z}{\mathbb{Z}}
\newcommand{\G}{\mathbf{G}}
\newcommand{\M}{\mathcal{M}}
\newcommand{\X}{\mathbf{X}}
\newcommand{\Ss}{\mathbb{S}}
\newcommand{\eps}{\varepsilon}
\newcommand{\mc}{\mathcal}
\newcommand{\End}{\mathrm{End}}
\newcommand{\B}{\mathbf{B}}
\newcommand{\W}{\mathbf{W}}
\newcommand{\dd}{d}
\newcommand{\Fr}{\mathrm{Fr}}
\newcommand{\Hol}{\mathrm{Hol}}
\DeclareMathOperator{\Tr}{Tr}
\DeclareMathOperator{\im}{Im}
\DeclareMathOperator{\re}{Re}
\DeclareMathOperator{\rk}{rank}
\DeclareMathOperator{\supp}{supp}
\DeclareMathOperator{\id}{Id}
\DeclareMathOperator{\E}{\mathcal{E}}
\DeclareMathOperator{\Hom}{Hom}
\DeclareMathOperator{\Op}{Op}
\DeclareMathOperator{\ran}{ran}
\DeclareMathOperator{\spann}{span}
\DeclareMathOperator{\e}{\mathbf{e}}
\DeclareMathOperator{\loc}{loc}
\theoremstyle{plain}
\newtheorem{theorem}{Theorem}[section]
\newtheorem{definition}[theorem]{Definition}
\newtheorem{lemma}[theorem]{Lemma}
\newtheorem{remark}[theorem]{Remark}
\newtheorem{proposition}[theorem]{Proposition}
\newtheorem{corollary}[theorem]{Corollary}
\numberwithin{equation}{section}
\begin{document}

\begin{abstract}
On a Riemannian manifold $(M, g)$ with Anosov geodesic flow, the problem of recovering a connection from the knowledge of traces of its holonomies along primitive closed geodesics is known as the \emph{holonomy inverse problem}. In this paper, we prove H\"older type stability estimates for this inverse problem: 
\begin{itemize}
	\item \emph{locally}, near generic connections; 
	\item \emph{globally}, for line bundles, and for vector bundles satisfying a certain low-rank assumption over negatively curved base $(M, g)$.
\end{itemize}
The proofs are based on a combination of microlocal analysis along with a new non-Abelian approximate Liv\v sic Theorem in hyperbolic dynamics.
\end{abstract}

\title{Stability estimates for the holonomy inverse problem}
\author[M. Ceki\'{c}]{Mihajlo Ceki\'{c}}
\date{\today}
\address{Institut f\"ur Mathematik, Universit\"at Z\"urich, Winterthurerstrasse 190, CH-8057 Z\"urich, Switzerland}
\email{mihajlo.cekic@math.uzh.ch}

\author[T. Lefeuvre]{Thibault Lefeuvre}
\address{Université de Paris and Sorbonne Université, CNRS, IMJ-PRG, F-75006 Paris, France.}
\email{tlefeuvre@imj-prg.fr}

\maketitle

\section{Introduction}

The invertibility of the geodesic $X$-ray transform is the problem of recovering a function from its integrals along geodesic lines, which has played a significant role in both pure and applied mathematics (see \cite{Kuchment-14}). More generally, one can consider the effect of an \emph{attenuation} (modelling some medium parameters) or more geometrically, we may \emph{twist} the $X$-ray transform with a \emph{connection} (or an endomorphism-valued potential, also known as \emph{Higgs field}). On Riemannian manifolds with boundary, such a problem has been extensively studied (see \cite{Paternain-13}); in the Euclidean case, the twisted $X$-ray transform is at the heart of the medical imaging method of SPECT and has been researched in depth (see \cite{Finch-03}). The related \emph{non-linear} problem is to determine the connection (or the Higgs field) from parallel transports along geodesics connecting boundary points. On closed manifolds, one integrates along \emph{closed} geodesics, and the correct non-linear data turns out to be \emph{traces} of parallel transports along closed geodesics. The determination of the connection from this data is called the \emph{holonomy inverse problem}, which is deeply related to inverse spectral theory \cite{Cekic-Lefeuvre-22b}, and has great similarity to quantum field theory, where this data is encoded in the \emph{Wilson loop operator} (see \cite{Wilson-74}).

\subsection{Geodesic Wilson loop operator}\label{ssec:geodesic-wilson}
 
Let $(M, g)$ be a smooth closed Riemannian manifold and denote by $SM = \{(x, v) \in TM \mid g_x(v, v) = 1\}$ its unit tangent bundle. Let $\pi: SM \to M$, $(x, v) \mapsto x$ be the footpoint projection. The geodesic flow $(\varphi_t)_{t \in \mathbb{R}}$ on $SM$ generated by the geodesic vector field $X$ is defined by $\varphi_t(x, v) = (\gamma_{x, v}(t), \dot{\gamma}_{x, v}(t))$, where $\gamma_{x, v}$ is the unit speed geodesic determined by $(x, v)$. We say that $(M, g)$ is \emph{Anosov} if its geodesic flow is Anosov (see Section \ref{sec:preliminaries} below for definition). It is known that if $(M, g)$ has negative sectional curvature, then it is Anosov (see \cite{Anosov-67} and \cite{Knieper-02} for a modern introduction).

Let $(\E, h) \to M$ be a smooth vector bundle equipped with a Hermitian metric in each fibre. We say that a connection $\nabla^{\E}$ on $\E$ is \emph{unitary} if it is compatible with $h$ (equivalently, parallel transport along any curve is unitary with respect to $h$), and denote the (affine) space of unitary connections by $\mc{A}_{\E}$. Given a unitary isomorphism $p: (\E, h) \to (\E, h)$ of vector bundles, the pullback action on $\mc{A}_{\E}$ is defined by $p^*\nabla^{\E}(\bullet) := p^{-1} \nabla^{\E}(p \bullet)$. \emph{Moduli space} of unitary connections is the space of connections modulo the pullback action, and for connections of H\"older-Zygmund regularity $N > 0$ it is denoted by $\mathbb{A}_{\E}^N$; in the smooth case we write $\mathbb{A}_{\E}^\infty := \cap_{N > 0} \mathbb{A}_{\E}^N$. There is a natural quotient metric $d_{C_*^N}(\bullet, \bullet)$ on $\mathbb{A}_{\E}^N$ (see \S \ref{sssection:distance-connections} below). 

Denote by $\mc{G}^\sharp$ the set of primitive closed geodesics on $(M, g)$; on Anosov manifolds, $\mc{G}^\sharp$ is in bijection with the set of primitive free homotopy classes of loops. Given a closed loop $\gamma$ and $x \in \gamma$, denote by $\Hol_{\nabla^{\E}}(x, \gamma) \in \mathrm{U}(\E(x))$ the parallel transport (or \emph{holonomy}) of $\nabla^{\E}$ along $\gamma$ at $x$. Observe that $\Hol_{\nabla^{\E}}(x, \gamma)$ depends up to conjugation on both changing the connection in the moduli equivalence class $[\nabla^{\E}] \in \mathbb{A}_{\E}$, as well as changing the basepoint $x \in \gamma$, hence its trace $\Tr(\Hol_{[\nabla^{\E}]}(\gamma))$ is well-defined. The \emph{(geodesic) Wilson loop operator} is given by
\[\mathbf{W}: \mathbb{A}_{\E} \to \ell^\infty(\mc{G}^\sharp), \quad \mathfrak{a} \mapsto (\mathbf{W}(\mathfrak{a}, \gamma))_{\gamma \in \mc{G}^\sharp}, \quad \mathbf{W}(\mathfrak{a}, \gamma) := \Tr(\Hol_{\mathfrak{a}}(\gamma)).\]
The \emph{holonomy inverse problem} asks to what extent can we determine $\mathfrak{a} \in \mathbb{A}_{\E}$ from $\mathbf{W}(\mathfrak{a})$.

In order to state the main results, let us introduce the following two conditions for $\mathfrak{a} \in \mathbb{A}_{\E}$. Observe first that there is a natural unitary connection $\nabla^{\End(\E)}$ on the endomorphism bundle $\End(\E)$ (see \S \ref{sssection:distance-connections}). Then
\begin{enumerate}[itemsep = 5pt]
	\item[{\bf(A)}] $\mathfrak{a}$ is \emph{opaque}, if there are no non-trivial subbundles $\mc{F} \subset \E$ which are preserved by parallel transport along geodesic lines (i.e. except $\mc{F} = M \times \{0\}$ and $\mc{F} = \E$). Equivalently, the kernel of $(\pi^*\nabla^{\End(\E)})_X$ acting on $C^\infty(SM, \pi^*\End(\E))$ is equal to $\mathbb{C} \id_{\E}$, where $\pi^*\nabla^{\End(\E)}$ denotes the pullback connection on the pullback bundle $\pi^*\End(\E)$ (see \cite[Section 5]{Cekic-Lefeuvre-22a} for more details).

	\item[{\bf(B)}] $\mathfrak{a}$ has \emph{solenoidally injective generalised $X$-ray transform} on $1$-forms with values in $\End(\E)$, if roughly speaking, the transform of integration along closed geodesics is invertible on the kernel of the adjoint $(\nabla^{\End(\E)})^*$. More precisely, if we set $\X^{\End} := (\pi^*\nabla^{\End(\E)})_X$, this means that for any $u \in C^\infty(SM, \End(\E))$ and $f \in C^\infty(M, T^*M \otimes \End(\E))$ we have
	\[\X^{\End} u = \pi_1^*f \implies \exists p \in C^\infty(M, \End(\E)), \quad f = \nabla^{\End(\E)}p.\]
	Here we denote by $\pi_1^*f \in C^\infty(SM, \End(\E))$ the section defined by $\pi_1^*f(x, v) := f(x)(v)$.
	\end{enumerate} 
	
	By the authors' prior work, when $n \geq 3$ both conditions {\bf(A)} and {\bf(B)} are valid on an \emph{open and dense set} in the moduli space $\mathbb{A}_{\E}^N$ for $N \gg 1$ sufficiently large (depending only on $n$); this was proved in \cite[Theorem 1.6]{Cekic-Lefeuvre-22a} and \cite[Theorem 1.8]{Cekic-Lefeuvre-21-2}, respectively. We note that {\bf{(A)}} and {\bf{(B)}} are stated for smooth connections, and the statements are adapted to the regularity $C_*^N$ in a straightforward way.

Injectivity of $\mathbf{W}$ was proved locally near connections that satisfy {\bf(A)} and {\bf(B)} in \cite{Cekic-Lefeuvre-22}. We now quantify this result:

\begin{theorem}
\label{theorem:main}
Let $(M, g)$ be a smooth Anosov $n$-manifold. There exists $N \gg 1$ sufficiently large (depending only on $n$) such that the following holds. Let $\mathfrak{a}_0 \in \mathbb{A}_{\E}^{N}$ be a point satisfying assumptions {\bf{(A)}} and {\bf{(B)}}, and let $\eta > 0$ arbitrary. There exist $\tau > 0$ (depending only on $(M, g)$ and $\eta$) and $\delta, C > 0$ such that for all $\mathfrak{a}_1, \mathfrak{a}_2 \in \mathbb{A}_{\E}^N$ with $d_{C_*^N}(\mathfrak{a}_i, \mathfrak{a}_0) < \delta$ for $i = 1, 2$, we have:
\begin{equation}\label{eq:main-estimate}
	d_{C^{N - \eta}_*}(\mathfrak{a}_1, \mathfrak{a}_2) \leq C \left( \sup_{\gamma \in \mc{G}^\sharp} \ell(\gamma)^{-1} \left|\W(\mathfrak{a}_1, \gamma) - \W(\mathfrak{a}_2, \gamma)\right| \right)^\tau.
\end{equation}
\end{theorem}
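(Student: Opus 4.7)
The plan is to reduce the stability estimate to a new quantitative non-Abelian Liv\v sic-type theorem in hyperbolic dynamics, combined with the linear stability provided by \textbf{(B)}, the opacity from \textbf{(A)}, and an interpolation argument to absorb the regularity loss. Set $r := \sup_{\gamma \in \mc{G}^\sharp}\ell(\gamma)^{-1}\bigl|\W(\mathfrak{a}_1,\gamma)-\W(\mathfrak{a}_2,\gamma)\bigr|$; the target is $d_{C_*^{N-\eta}}(\mathfrak{a}_1,\mathfrak{a}_2) \leq C r^\tau$. Using a local slice theorem at $\mathfrak{a}_0$ (as in \cite{Cekic-Lefeuvre-22}), I represent the connections by $\nabla_i = \nabla_0 + \phi_i$ with $\phi_i \in C_*^N(M, T^*M \otimes \End(\E))$ orthogonal to the gauge orbit and $\|\phi_i\|_{C_*^N} \lesssim \delta$, and set $\psi := \phi_1 - \phi_2$.

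The dynamical heart of the argument is a \emph{non-Abelian approximate Liv\v sic theorem}. The pullbacks $\pi^*\nabla_i$ yield two H\"older $\mathrm{U}(\E)$-cocycles $C_t^{(i)}$ over the geodesic flow $(\varphi_t)$, whose traces over closed orbits recover $\W(\mathfrak{a}_i, \cdot)$. The hypothesis states that these trace functions differ by $O(r\,\ell(\gamma))$ on each $\gamma \in \mc{G}^\sharp$. I aim to prove that this forces the two cocycles to be approximately cohomologous: there exists a H\"older section $q \in C^\alpha(SM, \mathrm{U}(\E))$ with $\|q - \id\|_{C^\alpha} \lesssim r^{\tau_0}$ such that
\[
\bigl\|C_t^{(1)} - (q \circ \varphi_t)\cdot C_t^{(2)} \cdot q^{-1}\bigr\|_{C^\alpha} \lesssim r^{\tau_0}
\]
uniformly on bounded time windows. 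Differentiating at $t=0$ produces the approximate transport equation $\X^{\End} q + \pi^*\phi_1\cdot q - q\cdot\pi^*\phi_2 = O_{C^\alpha}(r^{\tau_0})$, an approximate coboundary equation relating the two connections.

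Assumption \textbf{(A)} then enters to descend $q$ to the base: opacity says $\ker \X^{\End} = \mathbb{C} \cdot \id_{\E}$, and this isolated kernel admits a spectral gap for $\X^{\End}$, which combined with the transport equation yields a genuine gauge transformation $\bar q \in C^\alpha(M, \mathrm{U}(\E))$ with $\|\pi^*\bar q - q\|_{C^\alpha} \lesssim r^{\tau_0}$. Conjugating $\mathfrak{a}_1$ by $\bar q$ therefore reduces to the situation where, modulo the gauge slice, the twisted X-ray transform of $\psi$ along closed orbits is $O(r^{\tau_0})$. Assumption \textbf{(B)}, in its quantitative Fredholm form from \cite{Cekic-Lefeuvre-21-2}, then delivers a negative-Sobolev bound $\|\psi\|_{H^{-s}} \lesssim r^{\tau_0}$ for some $s > 0$ depending only on $n$. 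Interpolating with the a priori bound $\|\psi\|_{C_*^N} \lesssim 2\delta$ yields $\|\psi\|_{C_*^{N-\eta}} \lesssim r^\tau$ with $\tau := \tau_0\,\eta/(N+s)$, which is the desired estimate.

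The main obstacle is establishing the quantitative non-Abelian approximate Liv\v sic theorem with explicit H\"older control on $q$. Classical Liv\v sic arguments rely on specification/shadowing combined with a telescoping sum to identify the coboundary; in the Abelian approximate setting this extends relatively cleanly, but the non-commutativity of $\mathrm{U}(\E)$ makes corrections from distinct shadowing orbits interact multiplicatively along long orbits, so a naive telescoping loses control exponentially. Securing uniform H\"older bounds on $q$ with an explicit exponent $\tau_0 > 0$, in the setting of $\mathrm{U}(\E)$-valued cocycles over an Anosov flow, is the chief novelty and technical burden. A secondary delicate step is the quantitative descent of $q$ from $SM$ to $M$, which requires a quantitative spectral gap for $\X^{\End}$ afforded by opacity and careful matching of H\"older exponents between the dynamical and elliptic estimates.
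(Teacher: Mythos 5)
Your overall skeleton (approximate non-Abelian Liv\v sic conjugacy, then a quantitative use of \textbf{(B)} to get a negative-Sobolev bound, then interpolation with the a priori $C^N_*$ bound) is the same as the paper's, but there is a genuine gap at the core: the quantitative approximate Liv\v sic theorem you need is precisely the paper's main new ingredient (Theorem \ref{theorem:approximate-livsic}, proved via a stability estimate in representation theory applied to Parry's representation over homoclinic orbits, a good $\eps^{\beta_s}$-separated and $\eps^{\beta_d}$-dense orbit, and a H\"older extension/unitarisation), and you leave it unproven — you yourself flag it as the chief obstacle and observe that naive shadowing/telescoping fails. Moreover, the statement you posit is stronger than what is true: the correct conclusion is $\|\X^{\Hom}q\|_{C^\alpha}\lesssim r^{\tau}$ together with only a uniform bound $\|q\|_{C^\alpha}=\mc{O}(1)$; one cannot obtain $\|q-\id_{\E}\|_{C^\alpha}\lesssim r^{\tau_0}$ at the Liv\v sic stage (at best, and only after the spectral perturbation argument, $q$ is close to a \emph{scalar multiple} of the identity with error $\mc{O}(\delta)+\mc{O}(r^{\tau})$, controlled by the distance $\delta$ to $\mathfrak{a}_0$ rather than by $r$). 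Note also that \textbf{(A)} is needed for the Liv\v sic step itself: opacity is equivalent to irreducibility of Parry's representation, which the representation-theoretic stability lemma requires; relegating \textbf{(A)} to a later ``descent to the base'' misplaces its role.

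The downstream deduction inherits this problem. Your claim that, after conjugating by $\bar q$, the generalized X-ray transform of $\psi=\phi_1-\phi_2$ is $\mc{O}(r^{\tau_0})$ rests on $\|q-\id_{\E}\|\lesssim r^{\tau_0}$; with the true Liv\v sic output the cross terms $\pi_1^*\phi_i\,(q-c\,\id_{\E})$ are only $\mc{O}(\delta)$, so smallness of the linear data at rate $r^{\tau_0}$ does not follow, and \textbf{(B)} alone is a qualitative injectivity statement, not the quantitative bound you invoke. The paper bridges this differently: the approximate coboundary equation makes $p$ an exact resonant state of a perturbed mixed operator, and a perturbation argument for Pollicott--Ruelle resonances in low regularity (Section \ref{section:low-regularity}, together with the two-sided bounds of Lemma \ref{lemma:control}, needed because $q$ is merely $C^\alpha$) yields $|\lambda_{A_1,A_2}|=\mc{O}(\eps^{\tau})$ for the resonance at zero of $\X_{A_1,A_2}$; only then does the quantitative consequence of \textbf{(B)}, namely \cite[Lemma 4.5]{Cekic-Lefeuvre-22}, give the $H^{-1/2}$ bound in Coulomb gauge, after which the interpolation you describe closes the argument. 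So the final step is right, but both the dynamical ingredient and the bridge from it to the linear estimate are missing or incorrectly formulated as stated.
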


Note that if we write $\varepsilon$ (assumed positive) for the supremum in \eqref{eq:main-estimate}, then it is achieved for closed geodesics of length $\ell(\gamma) < 2\varepsilon^{-1} \rk(\E)$ (since the parallel transports are unitary), i.e. it always suffices to consider only \emph{finitely many geodesics}. In fact, we will see from the proof that, for any $\delta > 0$, it only suffices to consider the supremum over closed geodesics of length $\ell(\gamma) = \mc{O}_\delta(\eps^{-1/2 - \delta})$.

In the case of line bundles, i.e. $\rk(\E) = 1$, injectivity of $\mathbf{W}$ goes back to Paternain \cite{Paternain-09}; for sums of line bundles it was proved by the authors in \cite[Theorem 5.8]{Cekic-Lefeuvre-22}. The following quantifies the former result by providing a \emph{global} stability result:

\begin{theorem}
\label{theorem:stability-line-bundles-new}
	Let $(M, g)$ be an Anosov $n$-manifold and let $\pi_{\mc{L}}: (\mc{L}, h) \to M$ be a Hermitian line bundle. There exists $N \gg 1$ sufficiently large (depending on $n$) such that the following holds. Let $\eta > 0$ be arbitrary. Then, there exists $\tau > 0$ (depending only on $(M, g)$ and $\eta$) such that for all $K > 0$, there exists a constant $C > 0$, such that for all unitary connections $\nabla_1^{\mc{L}}$ and $\nabla_2^{\mc{L}}$ with $\|\nabla_1^{\mc{L}}-\nabla_2^{\mc{L}}\|_{C^N_*} \leq K$,
\[
	d_{C_*^{N - \eta}}([\nabla_1^{\mc{L}}], [\nabla_2^{\mc{L}}]) \leq C \left( \sup_{\gamma \in \mc{G}^\sharp}   \ell(\gamma)^{-1}\left|\W([\nabla_1^{\mc{L}}], \gamma) - \W([\nabla_2^{\mc{L}}], \gamma)\right| \right)^\tau.
\] 
\end{theorem}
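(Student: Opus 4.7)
The plan combines the local stability Theorem~\ref{theorem:main} with the Abelian structure of $\mathrm{U}(1)$-connections. First, I would verify that conditions \textbf{(A)} and \textbf{(B)} hold for every unitary connection on a Hermitian line bundle $\mc{L}$: opacity \textbf{(A)} is automatic since $\mc{L}$ has no proper subbundles, and for \textbf{(B)}, the canonical identification $\End(\mc{L}) \simeq M \times \C$ with the trivial connection reduces the twisted X-ray transform in \textbf{(B)} to the classical X-ray transform $I_1$ on $1$-forms, whose solenoidal injectivity on Anosov manifolds is the well-known result of Dairbekov--Sharafutdinov. Thus Theorem~\ref{theorem:main} applies near every connection, giving Hölder-stable inversion in a $C^N_*$-neighbourhood of each point; the task is to extract a globally uniform constant on the $K$-ball.

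To globalise, I would exploit the Abelian structure. Fix a reference connection $\nabla_0^{\mc{L}}$ and write $\nabla_j^{\mc{L}} = \nabla_0^{\mc{L}} + i\omega_j$ with $\omega_j$ real $1$-forms of $C^N_*$-norm bounded in terms of $K$. Setting $\omega := \omega_1 - \omega_2$, multiplicativity of $\mathrm{U}(1)$-holonomies gives
\[
	\bigl|\W(\nabla_1^{\mc{L}}, \gamma) - \W(\nabla_2^{\mc{L}}, \gamma)\bigr| \;=\; 2\bigl|\sin\bigl(\tfrac{1}{2}\textstyle\int_\gamma \omega\bigr)\bigr|,
\]
so the hypothesis controls $\int_\gamma \omega \bmod 2\pi$ by $\mc{O}(\eps\, \ell(\gamma))$ uniformly over $\gamma \in \mc{G}^\sharp$. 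Equivalently, the $\mathrm{U}(1)$-valued cocycle $\Phi_t^\omega(z) := \exp\bigl(-i\int_0^t (\pi_1^*\omega)(\varphi_s z)\,\dd s\bigr)$ is $\mc{O}(\eps\,\ell(\gamma))$-close to the identity over every closed orbit.

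I would then apply the non-Abelian approximate Liv\v sic theorem (the new dynamical ingredient of the paper) to this $\mathrm{U}(1)$-cocycle: since the infinitesimal generator $\pi_1^*\omega$ is uniformly bounded in $C^N_*(SM)$, the theorem produces $u = e^{i\psi} \in C^{N-\eta/2}(SM, \mathrm{U}(1))$ with
\[
	\|\pi_1^*\omega + X\psi\|_{C^{N-\eta/2}_*(SM)} \;\leq\; C\, \eps^{\tau_0}
\]
for some $\tau_0 > 0$ depending only on $(M,g)$, $\eta$, and $K$. A stability version of the solenoidal injectivity of $I_1$ -- built into the microlocal / tensor-tomography analysis of the cohomological equation already used for Theorem~\ref{theorem:main} (decomposition of $\psi$ into spherical-harmonic modes in $v$, combined with elliptic/Fredholm control on the Livsic transfer operator) -- upgrades this to $\|\omega - \dd f\|_{C^{N-\eta}_*(M)} \leq C' \eps^\tau$ for some $f \in C^{N-\eta}_*(M)$ and $\tau > 0$. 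Since exact $1$-forms parametrise the $\mathrm{U}(1)$-gauge orbit of connection $1$-forms, this controls the moduli distance $d_{C^{N-\eta}_*}([\nabla_1^{\mc{L}}],[\nabla_2^{\mc{L}}])$ as required.

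The main obstacle is the third step. The hypothesis only controls $\int_\gamma \omega$ modulo $2\pi$, and the winding number can be arbitrarily large on long closed geodesics even when $\omega$ is uniformly bounded. A linear / Abelian Liv\v sic theorem therefore cannot resolve this wrapping ambiguity, and this is precisely why the non-Abelian approximate Liv\v sic statement is essential; it is also where the a priori bound $K$ enters in a crucial way, by guaranteeing that the Liv\v sic constants and the subsequent microlocal estimates are uniform over the whole bounded set of connections, rather than depending on a specific base point $\mathfrak{a}_0$ as in Theorem~\ref{theorem:main}.
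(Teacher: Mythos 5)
Your overall route -- bypass Theorem~\ref{theorem:main}, feed the $\mathrm{U}(1)$-cocycle generated by $X+i\pi_1^*\omega$ (compared with the trivial cocycle, whose Parry representation is trivially irreducible and which is fixed, so the constants only depend on $K$) into the approximate Liv\v sic Theorem~\ref{theorem:approximate-livsic}, and then invoke stability of the $X$-ray transform $I_1$ -- is indeed the skeleton of the actual proof. But there are two genuine gaps in the way you execute it. First, Theorem~\ref{theorem:approximate-livsic} gives a unitary section $u\in C^\infty(SM,\Ss^1)$ with $\|u^{-1}Xu - i\pi_1^*\omega\|_{C^\alpha}=\mc{O}(\eps^\tau)$ only in a \emph{low} H\"older norm $C^\alpha$, with $\alpha$ small and depending on the flow, not in $C^{N-\eta/2}_*$ as you assert; the passage to $C^{N-\eta}_*$ has to be done afterwards, by Hodge-decomposing, applying the quantitative injectivity of $I_1$ on solenoidal $1$-forms (Gou\"ezel--Lefeuvre) together with the finite-dimensionality of harmonic forms, and interpolating against the a priori $C^N_*$ bound coming from $K$. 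More seriously, you write $u=e^{i\psi}$ with a globally defined $\psi$ on $SM$: such a logarithm need not exist, since $u$ is circle-valued and can have a nontrivial winding class in $H^1(SM;\mathbb{Z})$. This is not a technicality -- it is exactly the ``wrapping'' phenomenon you correctly flag at the end, and it does not disappear after the Liv\v sic step; it reappears as the harmonic part of the closed form $du/(iu)$, whose periods are only known to lie $\mc{O}(\eps^\tau\ell(\gamma))$-close to $2\pi\mathbb{Z}$.

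Because of this, your final step fails as stated: exact $1$-forms do \emph{not} parametrise the $\mathrm{U}(1)$-gauge orbit. The gauge action is $\nabla\mapsto\nabla+p^{-1}dp$ with $p\in C^\infty(M,\Ss^1)$, so the gauge directions are closed $1$-forms with periods in $2\pi\mathbb{Z}$, and the estimate $\|\omega-df\|_{C^{N-\eta}_*}\le C\eps^\tau$ you aim for is simply false in general: take $\nabla_2=\nabla_1+ih$ with $h$ a nonzero harmonic form with $2\pi\mathbb{Z}$-periods; then $\W(\nabla_1,\gamma)=\W(\nabla_2,\gamma)$ for every $\gamma$, so $\eps=0$, yet $\omega=-h$ is at positive distance from all exact forms. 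What is needed (and what the paper does) is to split $du/(iu)=\pi^*w+df$ using $H^1(SM)\cong H^1(M)$, show the solenoidal part of $\theta-w$ is $\mc{O}(\eps^{p\tau})$ small via the $I_1$-stability, observe that the remaining closed form has periods in $2\pi\mathbb{Z}+\mc{O}(\eps^{p\tau})\ell(\gamma)$, correct it by a small harmonic form $\eta_r$ so the periods become exactly integral, and only then exhibit an explicit gauge $G(x)=\exp\bigl(i\int_{\rho_x}(\eta-\eta_r)\bigr)\in\Ss^1$ with $\nabla_2^{\mc{L}}-\nabla_1^{\mc{L}}=G^{-1}dG+\mc{O}_{C^{N-\eta}_*}(\eps^{\widetilde\tau})$. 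Without this cohomological bookkeeping your argument only controls the distance to the \emph{identity component} of the gauge group, which is strictly weaker than the claimed bound on $d_{C_*^{N-\eta}}([\nabla_1^{\mc{L}}],[\nabla_2^{\mc{L}}])$. (Your opening paragraph about verifying {\bf(A)}, {\bf(B)} and invoking Theorem~\ref{theorem:main} is ultimately unused and would anyway not globalise by itself, since its constants are tied to a $\delta$-neighbourhood of a base point.)
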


Global injectivity of $\mathbf{W}$ under a certain low-rank assumption and assuming $(M, g)$ is negatively curved was proved in \cite{Cekic-Lefeuvre-22b}. More precisely, for each $m \in \mathbb{Z}_{\geq 1}$ we define $q(m)$ to be the least integer $r \in \mathbb{Z}_{\geq 1}$ such that there exists a \emph{non-constant} polynomial map between unit spheres $\mathbb{S}^m \to \mathbb{S}^r$. Here by polynomial map we mean the restriction of a polynomial map $\mathbb{R}^{m + 1} \to \mathbb{R}^{r + 1}$ to $\mathbb{S}^m$. It is known by results of Wood \cite{Wood-68} that 
\[m/2 < q(m) \leq m, \quad q(2^k) = 2^k,\quad k \in \mathbb{Z}_{\geq 0}.\]
We introduce $q_{\mathbb{C}}(m) := \frac{1}{2}q(m)$; note that $q(m)$ is even for $m \geq 2$. We refer to \cite[Section 2]{Cekic-Lefeuvre-22b} for more details on $q(m)$. Using this global uniqueness result and local stability proved in Theorem \ref{theorem:main}, we get:

\begin{corollary}
\label{corollary}
	Let $(M, g)$ be a closed smooth $n$-manifold with negative sectional curvature and let $(\E, h) \to M$ be a Hermitian vector bundle with $\rk(\E) \leq q_{\mathbb{C}}(n)$, equipped with an arbitrary smooth unitary connection $\mathfrak{a}_0$. There exists $N \gg 1$ sufficiently large (depending on $n$) such that the following holds. Let $\eta \in (0, 1)$ and $K > 0$ arbitrary, and let $\mc{U} \subset \mathbb{A}_{\E}^{N - \eta}$ be an open neighbourhood of connections which do not satisfy either {\bf{(A)}} or {\bf{(B)}}. Then, there exist $\tau > 0$ (depending only on $(M, g)$ and $\eta$) and $C > 0$, such that for all $\mathfrak{a}_1, \mathfrak{a}_2 \in \mathbb{A}_{\E}^N \setminus \mc{U}$ with $d_{C^N_*}(\mathfrak{a}_i, \mathfrak{a}_0) < K$, we have: 
	\[
		d_{C^{N - \eta}_*}(\mathfrak{a}_1, \mathfrak{a}_2) \leq C \left( \sup_{\gamma \in \mc{G}^\sharp} \ell(\gamma)^{-1} \left|\W(\mathfrak{a}_1, \gamma) - \W(\mathfrak{a}_2, \gamma)\right| \right)^\tau.
	\]
\end{corollary}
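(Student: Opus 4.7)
The plan is to upgrade the local stability of Theorem \ref{theorem:main} to the claimed global estimate by a compactness-and-contradiction argument, using the global injectivity of $\W$ from \cite{Cekic-Lefeuvre-22b} (valid under the hypotheses of the corollary: negative sectional curvature and $\rk(\E) \leq q_\C(n)$) to pin down the limits of any hypothetical failing sequence. I fix the exponent $\tau$ supplied by Theorem \ref{theorem:main} for the given $(M, g)$ and $\eta$; this exponent depends only on these data, which is exactly what permits a single $\tau$ to serve in the corollary. Suppose no such constant $C$ exists: one then obtains sequences $\mathfrak{a}_1^{(k)}, \mathfrak{a}_2^{(k)} \in \mathbb{A}_{\E}^N \setminus \mc{U}$ with $d_{C_*^N}(\mathfrak{a}_i^{(k)}, \mathfrak{a}_0) < K$ and
\[
d_{C_*^{N - \eta}}\bigl(\mathfrak{a}_1^{(k)}, \mathfrak{a}_2^{(k)}\bigr) \geq k \cdot \varepsilon_k^\tau, \qquad \varepsilon_k := \sup_{\gamma \in \mc{G}^\sharp} \ell(\gamma)^{-1} \bigl|\W(\mathfrak{a}_1^{(k)}, \gamma) - \W(\mathfrak{a}_2^{(k)}, \gamma)\bigr|.
\]
Since the left-hand side is bounded by the $d_{C_*^{N-\eta}}$-diameter of the $K$-ball around $\mathfrak{a}_0$, necessarily $\varepsilon_k \to 0$.

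Next I use the compact embedding $C_*^N \hookrightarrow C_*^{N-\eta}$ on the closed manifold $M$, together with a gauge-fixing step that selects for each $(i,k)$ a connection representative of $\mathfrak{a}_i^{(k)}$ whose $C_*^N$-norm is nearly minimal in its gauge orbit, to extract subsequences converging in $\mathbb{A}_{\E}^{N-\eta}$ to limits $\mathfrak{a}_1^\infty, \mathfrak{a}_2^\infty$; lower semicontinuity of the $C_*^N$-seminorm places these limits in $\mathbb{A}_{\E}^N$ as well. The map $\W(\cdot, \gamma)$ is continuous in the $C_*^{N-\eta}$-quotient topology (since holonomy already depends continuously on the connection at $C^0$ level), so $\varepsilon_k \to 0$ forces $\W(\mathfrak{a}_1^\infty, \gamma) = \W(\mathfrak{a}_2^\infty, \gamma)$ for every $\gamma \in \mc{G}^\sharp$. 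Because $\mc{U}$ is open in $\mathbb{A}_{\E}^{N-\eta}$, both limits lie in its closed complement and therefore satisfy {\bf(A)} and {\bf(B)}. The global injectivity theorem of \cite{Cekic-Lefeuvre-22b}, applied at $C_*^{N-\eta}$ regularity, then yields $\mathfrak{a}_1^\infty = \mathfrak{a}_2^\infty =: \mathfrak{a}_\infty$. Finally, Theorem \ref{theorem:main} at $\mathfrak{a}_\infty$ produces $\delta, C_0 > 0$ (with the same $\tau$) such that for all sufficiently large $k$ both $\mathfrak{a}_i^{(k)}$ belong to the $\delta$-ball around $\mathfrak{a}_\infty$, whence $d_{C_*^{N-\eta}}(\mathfrak{a}_1^{(k)}, \mathfrak{a}_2^{(k)}) \leq C_0 \varepsilon_k^\tau$, contradicting $d_{C_*^{N-\eta}}(\mathfrak{a}_1^{(k)}, \mathfrak{a}_2^{(k)}) \geq k \varepsilon_k^\tau$.

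The main obstacle I anticipate is the compactness step in the moduli space: one must verify that $d_{C_*^N}$-bounded sets are precompact in $d_{C_*^{N-\eta}}$, which requires a Palais--Uhlenbeck-type gauge-fixing lemma at finite Hölder--Zygmund regularity (for instance a Coulomb slice construction adapted to $C_*^N$). A secondary point to track is that the global uniqueness theorem of \cite{Cekic-Lefeuvre-22b}, typically stated for smooth connections, extends to $C_*^{N-\eta}$ regularity; this should be routine since that argument only uses a fixed, finite number of derivatives of the connection. Modulo these verifications, the corollary follows from the scheme above.
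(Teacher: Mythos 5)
Your proposal is correct and follows essentially the same route as the paper: a contradiction argument using the compact embedding $C_*^N \hookrightarrow C_*^{N-\eta}$ (via near-infimal gauge representatives, which is all that is needed since $d_{C_*^N}$ is defined as an infimum over the gauge group, so no Uhlenbeck-type gauge-fixing lemma is required), continuity of $\W$, the global uniqueness theorem of \cite{Cekic-Lefeuvre-22b} extended to finite regularity, and then the local stability of Theorem \ref{theorem:main} at the common limit. The only cosmetic difference is that the paper does not invoke lower semicontinuity to place the limit back in $\mathbb{A}_{\E}^N$; it simply applies the uniqueness and local stability results at the $C_*^{N-\eta}$-regular limit, exactly as you suggest in your fallback remark.
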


Let us discuss the conditions {\bf(A)} and {\bf(B)} when $(M, g)$ has negative curvature, under the low-rank assumption $\rk(\E) \leq q_{\mathbb{C}}(n)$. Non-opacity of $[\nabla^{\E}]$ is equivalent to \emph{reducibility} in the sense that we can write $(\E, \nabla^{\E}) = (\mc{F}, \nabla^{\mc{F}}) \oplus (\mc{F}^\perp, \nabla^{\mc{F}^\perp})$ as an orthogonal sum, where $\mc{F} \subset \E$ is non-trivial. This follows immediately from the proof of \cite[Theorem 4.5]{Cekic-Lefeuvre-22b}, as well as the claim about Grassmanians in \cite[Lemma 2.3]{Cekic-Lefeuvre-22b}, and \cite[Lemma 5.3]{Cekic-Lefeuvre-22a}. In particular, if the bundle $\E$ is topologically irreducible (i.e. it has no non-trivial subbundles), then \emph{all} connections in $\mathbb{A}_{\E}$ satisfy condition {\bf(A)}. The condition {\bf(B)} is much less understood, but for instance it is satisfied if the curvature of $\nabla^{\E}$ is small enough (see \cite[Theorem 1.6]{Guillarmou-Paternain-Salo-Uhlmann-16}). However, since the holonomy inverse problem (the corresponding non-linear problem) is as mentioned known to be injective in this situation, we expect that {\bf(B)} is always satisfied under the low-rank assumption. (In fact, in the analogous case of Riemannian metrics, it is known that the injectivity of the non-linear problem (boundary rigidity) implies the injectivity of the linear problem ($X$-ray transform) on two-dimensional manifolds, see \cite{Sharafutdinov-07}.)

As far as global injectivity of $\mathbf{W}$ is concerned, it is known to fail on some vector bundles of high enough rank, that is, there exist distinct $\mathfrak{a}_1, \mathfrak{a}_2 \in \mathbb{A}_{\E}$ such that $\mathbf{W}(\mathfrak{a}_1)=\mathbf{W}(\mathfrak{a}_2)$, see \cite[Proposition 4.2]{Cekic-Lefeuvre-22b}. Nevertheless, these connections are ``far apart'' in the moduli space, and it is very likely that injectivity of $\mathbf{W}$ still holds \emph{locally} around $\mathfrak{a}_1$ or $\mathfrak{a}_2$. However, a local stability estimate is also expected in this case, and more generally, as in Corollary \ref{corollary} we get global stability estimates once we restrict to a suitable subset of $\mathbb{A}_{\E}$ where injectivity of $\mathbf{W}$ and conditions {\bf(A)} and {\bf(B)} hold.

We also point out that a similar problem to the recovery of a connection from the trace of its holonomies along closed geodesics, is to recover simultaneously a connection and a Higgs field (i.e. endomorphism-valued potential) from similar data, see \cite{Paternain-Salo-Uhlmann-12,Guillarmou-Paternain-Salo-Uhlmann-16} for the related case of manifolds with boundary. It can be checked that the injectivity of the Wilson loop operator $\mathbf{W}$ for connections and Higgs fields in the low-rank setting (and negative curvature) immediately follows from \cite[Theorem 4.5]{Cekic-Lefeuvre-22b}. We expect identical stability estimates as Theorem \ref{theorem:main} and Corollary \ref{corollary} to hold.

\subsection{Approximate non-Abelian Liv\v{s}ic Theorem}

One of the main new ingredients in the above theorems is an approximate version of the Liv\v{s}ic theorem for \emph{unitary cocycles}, which may also be viewed as a stability result. We will take the infinitesimal point of view on cocycles, that is, they will be generated by differential operators $\X$; the more common parallel transport version of a cocycle is obtained by integrating (solving) the ordinary differential equation $\X u = 0$ along flow lines. 

Let $(\varphi_t)_{t \in \mathbb{R}}$ be an Anosov flow on a smooth closed manifold $\M$ generated by a smooth vector field $X$. We say that $(\varphi_t)_{t \in \mathbb{R}}$ is \emph{transitive} if it has a dense orbit in $\M$. Denote by $\mc{B}$ the set of all triples $(\E, h, \X)$, where $(\E, h) \to \M$ is a smooth Hermitian vector bundle equipped with a linear differential operator $\X$ of order $1$ satisfying the following Leibniz rule
\begin{equation*}
	\X(fu) = (Xf) u + f \X u, \quad  f \in C^\infty(\M),\,\, u \in C^\infty(\M,\E),
\end{equation*}
and such that the parallel transport induced by $\X$ is unitary with respect to $h$, or equivalently, we have:
\begin{equation*}
	X h(u_1, u_2) = h(\X u_1, u_2) + h(u_1, \X u_2), \quad u_1, u_2 \in C^\infty(\M, \E). 
\end{equation*}
We call such a triple a \emph{linear unitary extension} of $(\varphi_t)_{t \in \mathbb{R}}$. When the emphasis is on regularity, we will write $\mc{B}^1$ for the space of cocycles where $\X$ has coefficients in $C^1$. Given $(\E_j, h_j, \X_j) \in \mc{B}$ for $j = 1, 2$, it is possible to introduce a first order differential operator $\X^{\Hom}$ on the homomorphism vector bundle $\Hom(\E_1, \E_2)$ by the Leibniz formula
\begin{equation*}
	\X_2(H u) =\X^{\Hom}(H) u + H \X_1 u, \quad \forall H \in C^\infty(\M, \Hom(\E_1, \E_2)),\,\, u \in C^\infty(\M, \E_1).
\end{equation*}
Clearly $\X^{\Hom}$ depends on $\X_1$ and $\X_2$; we suppress these from the notation for simplicity and note that this dependence will be clear from context.

We now introduce a notion of uniform boundedness for linear unitary cocycles that is required to state the theorem. Let $K_0 > 0$ and $R_0 \in \mathbb{Z}_{\geq 1}$. By standard results from algebraic topology, there is an $L > 0$, such that any Hermitian vector bundle over $\M$ of rank $\leq R_0$ can be unitarily (smoothly) embedded by some $p: (\E, h) \xhookrightarrow{} \M \times (\mathbb{C}^L, h_{\mathrm{std}})$, where the latter is equipped with the standard Hermitian metric $h_{\mathrm{std}}$ on $\mathbb{C}^L$. Let $\pi_{\E}$ denote the orthogonal projection onto $p(\E)$, and consider the zero order term 
\[
	Q(\X) := p_*\X - \pi_{\E} \circ (X \times \id_{p(\E)}) \in C^1(\M, \End(p(\E))),
\] 
where $p_*\X(\bullet) := p\X(p^{-1}\bullet)$ and $\pi_{\E} \circ(X \times \id_{p(\E)})$ acts by differentiating coordinate-wise by $X$ and then projecting onto $p(\E)$. We say that $(\E, h, \X)$ is \emph{uniformly bounded by $(K_0, R_0)$} if there exists an embedding $p$ as above such that
\[
	\rk(\E) \leq R_0, \quad \|\pi_{\E}\|_{C^1} \leq K_0, \quad \|\pi_{\E}Q(\X) \pi_{\E}\|_{C^1} \leq K_0.
\]
See \S \ref{ssec:linear-unitary-extensions} below for more details on this concept.

Next, we fix an arbitrary periodic orbit $\gamma_\star$ of $(\varphi_t)_{t \in \mathbb{R}}$ together with a reference point $x_\star \in \gamma_\star$, and denote by $\mc{H}$ the set of homoclinic orbits to $\gamma_\star$ (that is, orbits that accumulate in both past and future to $\gamma_\star$). Define $\mathbf{G}$, \emph{Parry's free monoid}, to be the free monoid generated by $\mc{H}$. For $(\E, h, \X) \in \mc{B}$, introduce \emph{Parry's representation} as
\begin{equation}\label{eq:parry-rep-def}
	\rho: \mathbf{G} \to \operatorname{U}(\E(x_\star)), \quad \rho(\gamma) := \mathbf{H}^s_{x_s(\gamma) \to x_\star} \mathbf{H}^c_{x_u(\gamma) \to x_s(\gamma)} \mathbf{H}^u_{x_\star \to x_u(\gamma)}, \quad \gamma \in \mc{H},
\end{equation}
and extend multiplicatively to $\mathbf{G}$. Here $x_{s/u}(\gamma) \in \gamma$ are such that they belong to stable/unstable leaves of $x_\star$, and $\mathbf{H}^{s/c/u}$ denote \emph{stable/central/unstable holonomies}, see \S \ref{sssec:parry-monoid-anosov} below for more details. 

Similarly to the previous section, write $\mc{G}^\sharp$ for the set of primitive closed orbits of $(\varphi_t)_{t \in \mathbb{R}}$ and $\ell(\gamma)$ for the period of $\gamma \in \mc{G}^\sharp$. For $b = (\E, h, \X) \in \mc{B}$, define $\mathbf{W}(b, \gamma)$ to be the trace of the parallel transport induced by $\X$ along $\gamma \in \mc{G}^\sharp$. 

We are now in shape to state the theorem:

\begin{theorem}
\label{theorem:approximate-livsic}
Assume that $\M$ is endowed with a smooth transitive Anosov flow generated by the vector field $X$. Let $b_0 = (\E_0, h_0, \X_0) \in \mc{B}^1$ such that its Parry's representation $\rho_0$ is irreducible, and let $K_0 > 0$, $R_0 \in \mathbb{Z}_{\geq 1}$. Let $b = (\E, h, \X) \in \mc{B}^1$ such that $b$ is uniformly bounded by $(K_0, R_0)$. Then, there exist $\tau > 0$, $\alpha > 0$ (depending only on $X$) and $C > 0$, $\eps_0 > 0$ (depending on $b_0$, $K_0$, and $R_0$) such that the following holds. Assume that there exists $\eps \in (0, \eps_0)$ such that
\begin{equation}
\label{equation:wilson-small}
\left| \W(b_0,\gamma) - \W(b,\gamma) \right| < \eps \ell(\gamma), \quad \forall \gamma \in \mc{G}^\sharp.
\end{equation}
Then, there exists $p \in C^\infty(\M,\mathrm{U}(\E_0, \E))$ such that $\X^{\Hom} p \in C^\alpha(\M,\mathrm{U}(\E_0,\E))$ and
\[
	\|\X^{\Hom} p\|_{C^\alpha} \leq C \eps^\tau, \quad \|p\|_{C^\alpha} \leq C.
\]
\end{theorem}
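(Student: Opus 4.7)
The plan is to establish an approximate version of Parry's non-Abelian Liv\v{s}ic theorem by working first at the level of Parry's free monoid $\mathbf{G}$ at the reference point $x_\star$, and then propagating the intertwiner to $\M$ via hyperbolic holonomies. The first task is to translate the hypothesis \eqref{equation:wilson-small} on primitive closed orbits into an approximate trace condition on the two Parry representations $\rho_0, \rho$. Given a word $\gamma = \gamma_1 \cdots \gamma_n \in \mathbf{G}$, the Anosov closing lemma together with specification allows one to concatenate the homoclinic segments, returning near $x_\star$ between them, into a primitive closed orbit $\overline{\gamma} \in \mc{G}^\sharp$ whose holonomy under either $\X_0$ or $\X$ differs from $\rho_0(\gamma)$ resp.\ $\rho(\gamma)$ by an error of order $e^{-cT}$, where $T$ is the minimal excursion time. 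Combined with the $C^1$ bound on $\X$ from uniform boundedness by $(K_0, R_0)$, this yields
\[
    \bigl| \Tr(\rho_0(\gamma)) - \Tr(\rho(\gamma)) \bigr| \lesssim \eps\, \ell(\overline{\gamma}) + C(b_0, K_0, R_0)\, e^{-cT}, \quad \gamma \in \mathbf{G}.
\]

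The second step is to upgrade this trace closeness into an approximate intertwining at $x_\star$ using the irreducibility of $\rho_0$. By Burnside's theorem, $\rho_0(\mathbf{G})$ spans $\End(\E_0(x_\star))$, and because $\rk(\E_0) \leq R_0$ one obtains an effective lower bound on how well finitely many trace functionals $A \mapsto \Tr(\rho_0(\gamma) A)$ separate points of $\End(\E_0(x_\star))$. Averaging over a carefully chosen finite family of words of length $\lesssim |\log \eps|$ then produces $P_\star \in \mathrm{U}(\E_0(x_\star), \E(x_\star))$ satisfying
\[
    \| \rho(\gamma) P_\star - P_\star \rho_0(\gamma) \|_{\mathrm{op}} \lesssim \eps^\tau, \quad \gamma \in \mathbf{G},
\]
for some $\tau > 0$ depending only on $X$ (after absorbing the Burnside constants into $C$).

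Next, I would extend $P_\star$ to a continuous section $p : \M \to \mathrm{U}(\E_0, \E)$ by composing with stable, central and unstable holonomies of the Hom cocycle generated by $\X^{\Hom}$, following Parry's exact Liv\v{s}ic construction. The approximate intertwining from the previous step ensures that this extension is well-defined up to an $\eps^\tau$-error; transitivity gives density of heteroclinic trajectories through $x_\star$, so that every point of $\M$ is reached, and uniform boundedness by $(K_0, R_0)$ yields $\|p\|_{C^0} \lesssim 1$. Finally, the H\"older bound $\|\X^{\Hom} p\|_{C^\alpha} \lesssim \eps^\tau$ follows from a standard Liv\v{s}ic regularity bootstrap: the continuous function $\X^{\Hom} p$ is small along flow lines and its stable/unstable holonomies are small to the same order, yielding H\"older control along each weak foliation, combined by Journ\'e's lemma into a global $C^\alpha$ estimate, with $\alpha > 0$ determined by the hyperbolicity rates of $X$. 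Smoothness of $p$ itself is obtained by starting from a smooth initial datum at $x_\star$ (smoothing $P_\star$ if necessary while preserving the estimates) and solving the parallel transport equations.

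The main obstacle is the second step: extracting a near-unitary intertwiner $P_\star$ from merely approximate trace equality on the Parry monoid is the quantitative heart of the argument. In the exact case, irreducibility of $\rho_0$ together with Burnside's theorem identifies the intertwining space as one-dimensional; here one must effectively stabilise that identification, producing constants that depend in a controlled way on $(b_0, K_0, R_0)$. The subtle point is to balance the word length $\lesssim |\log \eps|$ needed for enough representation-theoretic sensitivity against both the length-linear Wilson-loop error and the exponentially decaying shadowing error from the first step, so that the resulting exponent $\tau$ is genuinely uniform.
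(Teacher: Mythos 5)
Your skeleton (shadowing to compare characters of the two Parry representations, a quantitative representation-theoretic stability at $x_\star$, then propagation by stable/central/unstable holonomies) matches the paper's strategy, but two of the steps you treat as routine are precisely where the real work lies, and as written they do not go through. First, the extraction of $P_\star$: you only describe what is needed ("averaging over a carefully chosen finite family of words") without an argument, and this step is genuinely delicate. The paper proves a separate stability lemma in representation theory (Lemma \ref{lemma:stability-representation}): from near-equality of characters on a finite "sufficiently large" set of words one builds a linear map $A$ on $\rho_0(\C[\G])$ that is approximately multiplicative, approximately $*$-preserving and approximately isometric, and then produces a unitary via a rank-one element $\omega\otimes\omega^\flat$ and polar decomposition. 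This requires irreducibility of $\rho_0$, invertibility of the character Gram matrix $M_0$, and, crucially, the equality $\rk(\E)=\rk(\E_0)$ — which is \emph{not} automatic from the trace hypothesis (the identity is not among the tested words) and is proved in the paper by a pigeonhole argument on powers $\rho(\gamma)^j$ (Lemma \ref{lemma:from-trace-to-parry}). Your proposal never establishes equal ranks, yet without it the map $P_\star\in\mathrm{U}(\E_0(x_\star),\E(x_\star))$ does not even exist.

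Second, and more seriously, the propagation and regularity step fails as described. Defining $p$ on all homoclinic/heteroclinic points through $x_\star$ and invoking transitivity gives no quantitative control: the mismatch between the "unstable-side" and "stable-side" definitions at a point of a homoclinic orbit is $\mc{O}(\eps^{\tau_0})$, and turning this into a H\"older bound requires knowing that distinct points of the chosen orbit are not too close — otherwise the H\"older constant of $p$ blows up as $\eps\to 0$. The paper's mechanism is to construct a \emph{single} homoclinic orbit of length $\leq\eps^{-1/2}$ whose trunk $\mc{T}_\eps$ is simultaneously $\eps^{\beta_d}$-dense and $\eps^{\beta_s}$-transversally separated (Lemma \ref{lemma:separation+dense}); the separation converts the $\eps^{\tau_0}$ mismatch into a bound by $d(x,y)^{\tau_0/\beta_s}$ on the trunk, a Whitney-type extension off the trunk gives a globally H\"older $\widetilde p$ with $\X^{\Hom}\widetilde p$ vanishing on the $\eps^{\beta_d}$-dense set, and smallness of $\X^{\Hom}p$ then follows from interpolation (bounded $C^\alpha$ norm plus $C^0$ norm $\mc{O}(\eps^{\beta_d\alpha})$), followed by a polar-decomposition unitarisation with its own H\"older bookkeeping. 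Your alternative — "small along flow lines, small holonomies, Journ\'e's lemma" — is not substantiated: Journ\'e yields regularity from leafwise regularity, not smallness of a quantity like $\X^{\Hom}p$ that is only defined once $p$ is already differentiable along the flow, and you never establish leafwise smallness with uniform constants. Finally, smoothness of $p$ cannot be obtained by "starting from a smooth datum at $x_\star$ and solving the transport equations" (that construction is only H\"older); the paper instead mollifies the H\"older unitary $p$ at the end while preserving the estimates.
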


Similarly to the remark after Theorem \ref{theorem:main}, we note that it only suffices to assume the condition \eqref{equation:wilson-small} for finitely closed orbits; more precisely, for any $\delta > 0$, the proof shows that the conidition \eqref{equation:wilson-small} is required only for $\ell(\gamma) = \mc{O}_{\delta}(\eps^{-1/2 - \delta})$. Observe that if we have the equality $\mathbf{W}(b_0, \gamma) = \mathbf{W}(b, \gamma)$ for all $\gamma \in \mc{G}^\sharp$, by applying the theorem for all $\varepsilon \in (0, \varepsilon_0)$ and using compactness, we may obtain $p \in C^{\frac{\alpha}{2}}(\M, \operatorname{U}(\E_0, \E))$ such that $\X^{\Hom}p = 0$. If $\X_0$ and $\X$ are of regularity $C^\infty$, then regularity results (see \cite{Bonthonneau-Lefeuvre-20}) show that $p$ is smooth and we retrieve the exact non-Abelian Liv\v{s}ic Theorem in \cite[Theorem 3.2]{Cekic-Lefeuvre-22}.

In the \emph{Abelian case}, a version of Theorem \ref{theorem:approximate-livsic} was first proved by Gou\"ezel and the second author in \cite{Gouezel-Lefeuvre-19}: it asserts that a function $f \in C^1(\M)$ whose (normalized) integral along every closed geodesic is $< \eps$ is a \emph{quasi-coboundary} in the sense that there exists $u, h \in C^\infty(\M)$ such that $f = Xu + h$ and $\|h\|_{C^\alpha} \leq C \eps^{\tau}\|f\|_{C^1}^{1-\tau}$. We note that it is possible to retrieve the Abelian approximate Liv\v sic Theorem of \cite{Gouezel-Lefeuvre-19} using Theorem \ref{theorem:approximate-livsic} (at least when the flow is \emph{homologically full}).

\subsection{Previous results} In the closed case, there are not many results on the holonomy inverse problem. Apart from the above mentioned references \cite{Cekic-Lefeuvre-22, Cekic-Lefeuvre-22b}, there are results of Paternain \cite{Paternain-09, Paternain-11, Paternain-12}, who classified \emph{transparent connections} on negatively curved surfaces, that is, connections whose parallel transport along every closed geodesic is the identity map; see also \cite[Section 9]{Guillarmou-Paternain-Salo-Uhlmann-16}.

In the boundary case, one considers the problem of recovering the connection (and a potential) from parallel transports between geodesics connecting boundary points. Paternain-Salo-Uhlmann-Zhou \cite{Paternain-Salo-Uhlmann-Zhou-19} show injectivity under a certain convex foliation condition and prove the injectivity of the related non-Abelian $X$-ray transform (based on the method of Vasy-Uhlmann \cite{Uhlmann-Vasy-16}); $L^2$-stability for potentials in the same setting was shown by Bohr \cite{Bohr-21}. Under the assumption that $(M, g)$ is \emph{simple}, generic local stability for both linear and non-linear problems was obtained by Zhou \cite{Zhou-17}, see also \cite{Paternain-Salo-Uhlmann-12}.

\subsection{Proof strategy} As mentioned above, the main new ingredient in the paper is the non-Abelian Liv\v{s}ic Theorem \ref{theorem:approximate-livsic}, as well an analysis of Pollicott-Ruelle resonances in low regularity. For the former result, we start by proving a \emph{stability result in representation theory}: a careful analysis shows that two unitary representations with nearly equal characters are nearly equivalent. This generalises the usual statement that two unitary representations are equivalent if and only if they have equal characters. We proceed to apply this to our setting: we show that the assumption of near equality of Wilson loop operators implies that Parry's representations acting on the fibres above $x_\star \in \gamma_\star$ have nearly equal characters, so our representation theoretic stability result applies. Then we proceed to construct a suitably dense, but also separated homoclinic orbit (to $\gamma_\star$) and using stable/central/unstable holonomies we extend this near equivalence to this good orbit and eventually in a controlled H\"older fashion to the whole space, completing the proof.

For the applications in \S \ref{ssec:geodesic-wilson}, we then need a careful perturbation theoretic argument in low regularity (we need to use the paradifferential calculus at this stage), as well as a stability estimate established in our previous work \cite{Cekic-Lefeuvre-22}.

\subsection{Perspectives} To the best of our knowledge, all known results (in the closed case) regarding injectivity of the holonomy inverse problem are concerned with \emph{unitary} connections. It would be interesting to generalize them to the non-unitary setting, and also to consider non skew-Hermitian Higgs fields (the latter is what typically appears in SPECT). This, as well as proving suitable stability estimates, is left for future investigation.

The \emph{connection} (or \emph{Bochner}) \emph{Laplacian} is the second order elliptic operator $\Delta_{\E} := (\nabla^{\E})^* \nabla^{\E}$; by the Duistermaat-Guillemin trace formula, the spectrum of this operator determines the Wilson loop operator $\mathbf{W}([\nabla^{\E}])$ (at least if all geodesics have distinct lengths). This observation and injectivity results for $\mathbf{W}$ were used in \cite{Cekic-Lefeuvre-22b} to prove that the spectrum of $\Delta_{\E}$ determines $[\nabla^{\E}]$. In view of the stability results of this paper, it would be interesting to quantify this inverse spectral result and to estimate the distance between the two spectra by the distance in $\mathbb{A}_{\E}$.

Chen, Erchenko, and Gogolev \cite{Chen-Erchenko-Gogolev-23} prove that simple manifolds with boundary $(M,g_M)$ can be isometrically embedded into closed Anosov manifolds $(N, g_N)$. Assuming {\bf (A)} and {\bf (B)} are true in this extension $(N, g_N)$ (which is true at least generically), our stability results also give certain H\"older type stability estimates for the parallel transport problem on $(M, g_M)$.

\subsection{Organisation of the paper} In Section \ref{sec:preliminaries}, we introduce the tools that are used throughout the paper: (linear) extensions of Anosov flows and Parry's representation, moduli space of connections, and the theory of Pollicott-Ruelle resonances. In Section \ref{section:low-regularity}, we establish some technical results for Pollicott-Ruelle resonances with low-regularity potentials that are needed in the proof of the main Theorem \ref{theorem:main}. Next, in Section \ref{section:representation-theory} we prove a stability estimate in representation theory. In Section \ref{sec:approximate-livsic} we prove the approximate non-Abelian Liv\v{s}ic Theorem \ref{theorem:approximate-livsic}. Finally, in Section \ref{sec:main-proof} we prove the main stability estimates, Theorems \ref{theorem:main} and \ref{theorem:stability-line-bundles-new}, as well as Corollary \ref{corollary}.
\medskip

\noindent \textbf{Acknowledgements:} M.C. has received funding from an Ambizione grant (project number 201806) from the Swiss National Science Foundation. During the course of writing this paper, he was also supported by the European Research Council (ERC) under the European Union’s Horizon 2020 research and innovation programme (grant agreement No. 725967). We thank the anonymous referee for their comments.

\section{Preliminaries}\label{sec:preliminaries}

In this section we introduce the tools needed in the main proofs: fundamental domains for homoclinic orbits of Anosov flows, connections in limited regularity, linear unitary cocycles, we study the \emph{uniform boundedness} assumption for cocycles, and we introduce Parry's representation.

\subsection{Anosov flows, homoclinic orbits, and fundamental domains}\label{sssec:parry-monoid-anosov} 
\label{ssec:fundamental-domain} 

Let $\M$ be a smooth manifold equipped with a transitive Anosov flow $(\varphi_t)_{t \in \R}$ generated by a vector field $X \in C^\infty(\M,T\M)$. More precisely, we assume there exists a $d\varphi_t$-invariant continuous splitting $T\M = \mathbb{R}X \oplus E_u \oplus E_s$ and there exist $C, \nu > 0$ such that for all $x \in \M$
\begin{align}\label{eq:anosov}
\begin{split}
	|d\varphi_t(x)(v)| \leq C e^{-\nu |t|} |v|, \qquad \begin{cases} v \in E_s(x),\, t \geq 0,\\
	v \in E_u(x),\, t \leq 0,
	\end{cases}
\end{split}
\end{align}
where $|\bullet|$ is an arbitrary smooth norm on the fibres of $T\M$. We call $E_{s/u}$ \emph{stable/unstable} bundles. A principal example of volume preserving Anosov flows are geodesic flows on Riemannian manifolds of negative curvature. Write $d(\bullet, \bullet)$ for a distance induced by a smooth fixed Riemannian metric. 

Anosov flows admit stable and unstable foliations tangent to stable and unstable bundles; for details of the following see \cite[Theorem 6.1.1]{Fisher-Hasselblatt-19}. \emph{Global} leaves at $x \in \M$ are defined as:
\begin{align*}
	W^{s}(x) &:= \{y \in \M \mid d(\varphi_{t}x, \varphi_{t} y) \to_{t \to \infty} 0\},\\
	W^{u}(x) &:= \{y \in \M \mid d(\varphi_{-t}x, \varphi_{-t} y) \to_{t \to \infty} 0\},
\end{align*}
which are injectively immersed smooth submanifolds tangent to $E^{u}(x)$ and $E^s(x)$, respectively. Moreover, for every $t_0 > 0$ and $x \in \M$ there are \emph{local} leaves $W^{s/u}_{\loc}(x)$ diffeomorphic to a disk and tangent to $E_{s/u}(x)$ at $x$, and the dependence on $x$ is continuous. They satisfy the following properties: there exist $C > 0$ and $\mu \in (0, \nu)$ such that for $t \geq 0$
\begin{align}\label{eq:speed-of-convergence}
	d(\varphi_t x, \varphi_t y) \leq C e^{-\mu t},\quad y \in W^s_{\loc}(x), \qquad d(\varphi_{-t} x, \varphi_{-t} y) \leq C e^{-\mu t},\quad y \in W^u_{\loc}(x).
\end{align}
Moreover, for all $t \geq t_0$ it holds that
\begin{align}\label{eq:invariance-flow-t0}
	\varphi_{t}(W^s_{\loc}(x)) \subset W^s_{\loc}(\varphi_tx), \quad \varphi_{-t}(W^u_{\loc}(x)) \subset W^u_{\loc}(\varphi_{-t}x).
\end{align}
For all $\varepsilon > 0$ small enough we have $W^{s/u}_{\eps}(x) \subset W^{s/u}_{\loc}(x)$ where
\begin{align*}
	W^{s}_{\varepsilon}(x) &:= \{y \in W^s_{\loc}(x) \mid \forall t \geq 0,\, d(\varphi_t x, \varphi_ty) < \varepsilon\},\\
	W^{u}_{\varepsilon}(x) &:= \{y \in W^u_{\loc}(x) \mid \forall t \geq 0,\, d(\varphi_{-t} x, \varphi_{-t}y) < \varepsilon\}.
\end{align*}
There exists $\varepsilon_0 > 0$ small enough such that for all $\varepsilon \in (0, \varepsilon_0)$, if $d(x, y) < \varepsilon$ there is a unique point
\begin{equation}\label{eq:bowen-bracket}
	z := [x, y] \in W^{cu}_{\varepsilon}(x) \cap W^s_{\varepsilon}(y)
\end{equation}
called the \emph{Bowen bracket}, where $W^{cu}_{\eps}(x) = \cup_{t \in (-\eps, \eps)} W^u(\varphi_t x)$. Moreover, there exists a $t = t(x, y) \in (-\varepsilon, \varepsilon)$ such that $z \in W^u_{\varepsilon}(\varphi_t(x))$, and a uniform $C > 0$ such that
\[d(z, \varphi_t(x)) < C d(x, y), \quad d(z, y) < Cd(x, y).\]
For details about the Bowen bracket, see \cite[Proposition 6.2.2]{Fisher-Hasselblatt-19}. By considering \emph{adapted norms}, defined for $x \in \M$ and $v \in T_xM$ as
\begin{equation}\label{eq:adapted-norms}
	|v|_s^2 := \int_{0}^T e^{\mu t} |d\varphi_t(x)(v)|^2\, dt, \quad |v|_u^2 := \int_{0}^T e^{\mu t} |d\varphi_{-t}(x)(v)|^2\, dt,
\end{equation}
where $\mu \in (0, \nu)$ and $T = T(\mu) > 0$ is large enough, we may assume for $t \geq 0$ that
\[|d\varphi_t(x)(v)|_s \leq e^{-\mu t} |v|_s, \quad v \in E_s(x), \quad |d\varphi_{-t}(x)(v)|_u \leq e^{-\mu t} |v|_u, \quad v \in E_u(x).\]
Indeed, this is standard and the proof is similar to \cite[Proposition 5.1.5]{Fisher-Hasselblatt-19}. 

We fix an arbitrary periodic point $x_\star$ of period $T_\star$, whose orbit is denoted by $\gamma_\star$. Denote by $\mc{H}$ the set of homoclinic orbits to $x_\star$, namely, orbits which accumulate both in the past and in the future to $\gamma_\star$. More precisely, $\gamma \in \mc{H}$ if there exist points $x_\pm \in \gamma$ such that $d(\varphi_{\pm t} x_\pm, \varphi_{\pm t} x_\star) \to 0$ as $t \to \infty$.

It will be convenient to associate a notion \emph{length} to every homoclinic orbit. The flow in time $T_\star$, $\Psi := \varphi_{T_\star}$ acts on $W^{s/u}_{\loc}(x_\star)$ by \eqref{eq:invariance-flow-t0} by taking $t_0 < T_\star$. In fact, we have:

\begin{lemma}\label{lemma:fundamental-domain}
	With the notation as above, there are further $\Psi$-invariant open subsets $U_{s/u} \subset W^{s/u}_{\loc}(x_\star)$ homeomorphic to disks for which there are fundamental domains $\mathbf{D}_{s/u} \subset U_{s/u}$ homeomorphic to annuli. Every homoclinic orbit $\gamma \in \mc{H}$, except $\gamma_\star$, crosses $\mathbf{D}_{s/u}$ exactly once.
\end{lemma}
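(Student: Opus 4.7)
The plan is to construct $U_s$ as a small open disk around $x_\star$ inside $W^s_{\loc}(x_\star)$ on which $\Psi := \varphi_{T_\star}$ acts as a strict contraction, and then to set $\mathbf{D}_s := \overline{U_s}\setminus\Psi(U_s)$; the unstable pair $(U_u, \mathbf{D}_u)$ will be obtained by the identical construction with $\Psi^{-1}$ on $W^u_{\loc}(x_\star)$. First, \eqref{eq:invariance-flow-t0} with $t_0 \leq T_\star$ guarantees $\Psi(W^s_{\loc}(x_\star)) \subset W^s_{\loc}(x_\star)$, and the adapted norm \eqref{eq:adapted-norms} shows that $d\Psi(x_\star)|_{E_s(x_\star)}$ has operator norm $\leq e^{-\mu T_\star}$; by continuity of $d\Psi$, $\Psi$ is therefore a strict contraction on some neighbourhood of $x_\star$ in $W^s_{\loc}(x_\star)$. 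I then choose an open adapted-metric ball $U_s$ around $x_\star$ small enough that $\Psi(\overline{U_s}) \subset U_s$; this forces $\overline{\Psi(U_s)} \subset U_s$, so $\mathbf{D}_s$ is homeomorphic to a closed annulus, while $\Psi(U_s) \subset U_s$ gives the required forward $\Psi$-invariance.

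Next I verify the fundamental domain property: the contraction forces $\bigcap_{n \geq 0}\Psi^n(U_s) = \{x_\star\}$, so each $y \in U_s \setminus \{x_\star\}$ lies in a unique layer $\Psi^n(U_s) \setminus \Psi^{n+1}(U_s) \subset \Psi^n(\mathbf{D}_s)$, meaning the forward $\Psi$-orbit of $y$ crosses $\mathbf{D}_s$ exactly once. The remaining, and main, step is to show that every homoclinic orbit $\gamma \in \mc{H} \setminus \{\gamma_\star\}$ meets $W^s_{\loc}(x_\star)$ along a single $\Psi$-orbit. Non-emptiness is automatic from the definition of $\mc{H}$: writing $\gamma = \{\varphi_t(x_+)\}_{t \in \mathbb{R}}$ with $x_+ \in W^s(\varphi_{s_0}(x_\star))$ for some $s_0 \in \mathbb{R}$, the point $\varphi_{-s_0}(x_+)$ lies in $W^s(x_\star) = \bigcup_{n \geq 0}\Psi^{-n}(W^s_{\loc}(x_\star))$, so some $\Psi$-iterate already sits inside $W^s_{\loc}(x_\star)$. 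For the single-orbit claim, suppose $z,\,z' = \varphi_s(z) \in \gamma \cap W^s_{\loc}(x_\star)$. A triangle inequality using $d(\varphi_t z,\varphi_t x_\star) \to 0$ and $d(\varphi_t z',\varphi_t x_\star) \to 0$ yields $d(\varphi_t z,\varphi_s(\varphi_t z)) \to 0$ as $t \to \infty$; on the other hand, if $s \notin T_\star \mathbb{Z}$ then $y \mapsto d(y,\varphi_s y)$ is continuous and strictly positive on the compact orbit $\gamma_\star$, hence bounded below by some $\delta > 0$ on a neighbourhood of $\gamma_\star$ into which $\varphi_t z$ eventually enters, a contradiction. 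Therefore $s = nT_\star$ and $z' = \Psi^n(z)$, and combined with the fundamental domain property this gives $\#(\gamma \cap \mathbf{D}_s) = 1$.

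The unstable pair is handled by the identical argument with $\Psi^{-1}$ contracting $W^u_{\loc}(x_\star)$, the backward-asymptotic condition on $\mc{H}$ replacing the forward one in the single-orbit step. The main obstacle is precisely this last step: the choice of $U_s$, $\mathbf{D}_s$ and the fundamental-domain property are formal consequences of the contraction dynamics, but excluding two distinct $\Psi$-orbits inside $\gamma \cap W^s_{\loc}(x_\star)$ is where the homoclinic structure of $\mc{H}$ is genuinely used, and it is what makes $\mathbf{D}_{s/u}$ function as a transverse cross-section for the whole family $\mc{H}$.
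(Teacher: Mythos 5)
Your route is essentially the paper's: contract on the local stable leaf at $x_\star$, take a forward $\Psi$-invariant disk $U_s$ and the "shell" between $U_s$ and $\Psi(U_s)$ as fundamental domain, and prove uniqueness of the crossing by showing that two intersection points of a homoclinic orbit with $W^s_{\loc}(x_\star)$ must differ by a time in $T_\star\mathbb{Z}$ (your triangle-inequality/compactness argument is the same in substance as the paper's). The only structural difference is that the paper invokes the Hartman--Grobman theorem to conjugate $\Psi|_{U_s}$ to the linear contraction $d\Psi(x_\star)$, which delivers at once the invariant disk, the annulus structure of $U_s\setminus\Psi(U_s)$ (complement of nested convex sets), and the fact that each $\Psi$-orbit in $U_s$ meets the domain exactly once; you replace this by a direct adapted-metric contraction ball, which is fine in spirit but puts the burden of the annulus claim on you (for a smoothly embedded ball compactly contained in a ball this needs a disk-theorem type argument, not just $\overline{\Psi(U_s)}\subset U_s$).

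There is, however, one concrete defect: your choice $\mathbf{D}_s:=\overline{U_s}\setminus\Psi(U_s)$ does not satisfy the lemma. First, it is not contained in $U_s$, as the statement requires. More importantly, it breaks the "exactly once" property for orbits that graze the boundary: if an orbit point $z$ lies exactly on $\partial U_s$, then $z\in\overline{U_s}\setminus\Psi(U_s)$ (since $\Psi(U_s)\subset U_s$), and also $\Psi(z)\in\Psi(\overline{U_s})\subset U_s\subset\overline{U_s}$ while $\Psi(z)\notin\Psi(U_s)$ by injectivity of $\Psi$ and $z\notin U_s$; hence both $z$ and $\Psi(z)$ lie in $\mathbf{D}_s$ and the orbit crosses twice. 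Your layer argument only covers points of $U_s$ and so does not exclude this. The fix is to take $\mathbf{D}_s:=U_s\setminus\Psi(U_s)$ (the paper's choice); then "at least once" still holds because if $y\in\Psi^n(U_s)$ with $n\geq 1$ then $\Psi^{-1}(y)\in\Psi^{n-1}(U_s)\subset U_s$, so the first orbit point to enter $U_s$ automatically lies in $U_s\setminus\Psi(U_s)$, and your uniqueness-of-the-$\Psi$-orbit step then gives exactly one crossing.
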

\begin{proof}
	Let us prove the claim for the stable foliation; the case of the unstable foliation follows by considering the flow $(\varphi_{-t})_{t \in \mathbb{R}}$. Note that $x_\star \in W^s_{\loc}(x_\star)$ is a hyperbolic fixed point for $\Psi$ since $d\Psi(x_\star): E_s(x_\star) \to E_s(x_\star)$ is contracting for the norm $|\bullet|_s$ by \eqref{eq:adapted-norms} and hence $x_\star$ is a hyperbolic fixed point for $\Psi$. By the Hartman-Grobman Theorem (see \cite[Theorem B.4.14]{Fisher-Hasselblatt-19}) there exist an open neighbourhood of zero $V_s = \{v \in E_s(x_\star) \mid |v|_s < \delta\} \subset E_s(x_\star)$ for some $\delta > 0$ small enough, an open set $U_s \subset W^s_{\loc}(x_\star)$ containing $x_\star$ invariant under $\Psi$ and a homeomorphism $h: U_s \to V_s$ such that $\Psi|_{U_s} = h^{-1} \circ d\Psi(x_\star) \circ h|_{U_s}$. Therefore $\Psi$ is locally topologically conjugate to the contractive linear map $d\Psi(x_\star)$ and the claim follows upon taking $\mathbf{D}_s := U_s \setminus \Psi(U_s)$.
	
	For the last claim, assume $\gamma \in \mc{H}$ intersects $\mathbf{D}_{s}$ at $x$ and $y = \varphi_{t_0}x$ for $t_0 \geq 0$, which by definition means that $d(\varphi_{kT_*}x, x_\star)\to 0$ and $d(\varphi_{kT_* + t_0}x, x_\star) \to 0$ as $k \to \infty$. This is impossible unless $t_0$ is an integer multiple of $T_\star$. But $\mathbf{D}_s$ was constructed so that each orbit of $\Psi$ in $U_s$ intersects $\mathbf{D}_s$ only once showing $t_0 = 0$ and so $x = y$. Note that each homoclinic orbit intersects $W^s_{\loc}(x_\star)$ since $W^s_{\varepsilon}(x_\star) \subset W^s_{\loc}(x_\star)$ for $\varepsilon > 0$ small enough, and so eventually also intersects the neighbourhood $U_s$ and so $\mathbf{D}_s$.
\end{proof}

Note that the choice of the fundamental domains $\mathbf{D}_{s/u}$ in Lemma \ref{lemma:fundamental-domain} is arbitrary and one can obtain another fundamental domain $\mathbf{D}'_{s/u}$ from $\mathbf{D}_{s/u}$ by simply applying the map $\Psi$ a finite number of times. Fundamental domains are connected unless the ranks of $E_{s/u}$ are equal to $1$ in which case they have two components (homeomorphic to intervals). We define the points $x_{s/u}(\gamma) \in \gamma$ such that
\[
	\left\{x_{s/u}(\gamma)\right\} := \mathbf{D}_{s/u} \cap \gamma, \quad \gamma \in \mc{H}.
\]

\begin{definition}
\label{definition:length}
The \emph{length} of $\gamma$ is defined to be the unique time $T(\gamma)$ such that $x_s(\gamma) = \varphi_{T(\gamma)}(x_u(\gamma))$. The segment of orbit $\mc{T}_\gamma := [x_u(\gamma);x_s(\gamma)]$ will be called the \emph{trunk} of $\gamma$.
\end{definition}

The following lemma shows that we can choose the fundamental domains to make the length function positive.

\begin{lemma}\label{lemma:length-positive}
	The length function is bounded from below. Therefore, by choosing the fundamental domains $\mathbf{D}_{s/u}$ suitably, we can assume that $T(\gamma)$ is positive for any $\gamma \in \mc{H}$.
\end{lemma}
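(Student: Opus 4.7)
The strategy is to establish the lower bound by contradiction, exploiting the fact that $W^u_{\loc}(x_\star)$ is exponentially contracted toward $\gamma_\star$ by the backward flow while $W^s_{\loc}(x_\star)$ is transverse to $\gamma_\star$ at $x_\star$; positivity then follows by shifting the unstable fundamental domain by a sufficiently high iterate of $\Psi$.

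Suppose for contradiction that a sequence $(\gamma_n) \subset \mc{H}$ satisfies $T(\gamma_n) \to -\infty$. Since $x_u(\gamma_n) \in \overline{\mathbf{D}_u} \subset W^u_{\loc}(x_\star)$, the unstable estimate in \eqref{eq:speed-of-convergence} applied with $t = -T(\gamma_n) \geq 0$ yields
\[
d\bigl(x_s(\gamma_n),\,\varphi_{T(\gamma_n)}(x_\star)\bigr) \;=\; d\bigl(\varphi_{T(\gamma_n)}(x_u(\gamma_n)),\,\varphi_{T(\gamma_n)}(x_\star)\bigr) \;\leq\; C e^{\mu T(\gamma_n)} \to 0,
\]
so $(x_s(\gamma_n))$ accumulates on the compact set $\gamma_\star$. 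On the other hand, $x_s(\gamma_n) \in \overline{\mathbf{D}_s} \subset W^s_{\loc}(x_\star)$; provided $W^s_{\loc}(x_\star)$ has been chosen small enough at the outset of the construction of Lemma \ref{lemma:fundamental-domain}, we have $W^s_{\loc}(x_\star) \cap \gamma_\star = \{x_\star\}$ (this uses only $X(x_\star) \notin E_s(x_\star)$, i.e.\ transversality of $W^s_{\loc}(x_\star)$ to the flow, together with compactness of $\gamma_\star$). Hence every accumulation point of $(x_s(\gamma_n))$ equals $x_\star$. This contradicts the fact that $x_\star$ lies in the interior of the open set $\Psi(U_s)$, which is disjoint from $\mathbf{D}_s = U_s \setminus \Psi(U_s)$, so that $d(x_\star, \overline{\mathbf{D}_s}) > 0$. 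Thus $T(\gamma) \geq -K$ for some $K > 0$ and all $\gamma \in \mc{H}$.

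For the second claim, replace $\mathbf{D}_u$ by $\mathbf{D}_u' := \Psi^{-N}(\mathbf{D}_u)$ for any integer $N$ with $N T_\star > K$. Since $\Psi$ is a homeomorphism of $U_u$ and each $\Psi$-orbit in $U_u$ meets $\mathbf{D}_u$ exactly once, $\mathbf{D}_u'$ is again a fundamental domain; the new base point is $x_u'(\gamma) = \varphi_{-N T_\star}(x_u(\gamma))$, and the new length is $T'(\gamma) = T(\gamma) + N T_\star > 0$, as required. The only delicate point in the whole argument is the transversality step $W^s_{\loc}(x_\star) \cap \gamma_\star = \{x_\star\}$; but this can always be arranged by shrinking the Hartman--Grobman parameter $\delta$ in Lemma \ref{lemma:fundamental-domain} so that $W^s_{\loc}(x_\star)$ lies in a small enough ball around $x_\star$, which does not affect the conclusions of that lemma.
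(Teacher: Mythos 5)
Your proof is correct and takes essentially the same approach as the paper: the paper flows $\mathbf{D}_s$ forward using \eqref{eq:speed-of-convergence} together with the fact that the fundamental domains lie at positive distance from $x_\star$, while you apply the mirror-image unstable estimate to $x_u(\gamma)$ under the backward flow, and the final shift of a fundamental domain by an iterate of $\Psi$ (changing all lengths uniformly by $NT_\star$) is identical. Your explicit handling of the point that $x_s(\gamma_n)$ approaches $\gamma_\star$ rather than $x_\star$, via $W^s_{\loc}(x_\star)\cap\gamma_\star=\{x_\star\}$, is a correct detail that the paper leaves implicit.
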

\begin{proof}
	
Observe that the definition of length depends on the choice of fundamental domains $\mathbf{D}_{s/u}$, and changing the domains (under iteration by $\Psi^{\pm 1}$) will affect uniformly all the lengths by $\pm kT_\star$, $k \in \Z_{\geq 0}$. In particular, once we show the length function is bounded from below we can modify $\mathbf{D}_s$ such that the length function is positive.

By Lemma \ref{lemma:fundamental-domain}, the fundamental domains are at some positive distance from $x_\star$. Using \eqref{eq:speed-of-convergence} we know that there are $C > 0$ and $\mu > 0$ such that for $x \in \mathbf{D}_s \subset W^s_{\loc}(x_\star)$, and $k \in \mathbb{Z}_{> 0}$, we have $d(\varphi_{kT_\star}x, x_\star) \leq C e^{-\mu k T_\star}$. Therefore, for $k$ sufficiently large $\varphi_{kT_\star}(\mathbf{D}_s)$ will be very close to $x_\star$ and will miss $\mathbf{D}_u$, showing that the length function is bounded from below. This completes the proof.
\end{proof}

From now on we make a standing assumption that $\mathbf{D}_{s/u}$ are chosen such that the length function is positive (possible by Lemma \ref{lemma:length-positive}). For later use, for each $n \in \mathbb{Z}_{\geq 0}$, we introduce the following translates of $x_{s/u}(\gamma)$:
\begin{equation}\label{eq:fundamental-domain-translate}
	x_{s}(\gamma; n) := \varphi_{nT_\star}(x_s(\gamma)), \quad x_{u}(\gamma; n) := \varphi_{-nT_\star}(x_u(\gamma)).
\end{equation}
Thanks to \eqref{eq:speed-of-convergence}, and since $\mathbf{D}_{s/u}$ are fixed, we have:
\begin{equation}\label{eq:estimate-translate}
	d(x_{s/u}(\gamma; n), x_\star) = \mc{O}(e^{-nT_\star \mu}), \quad n \to \infty.
\end{equation}

\begin{figure}
\centering
\includegraphics[scale=1.2]{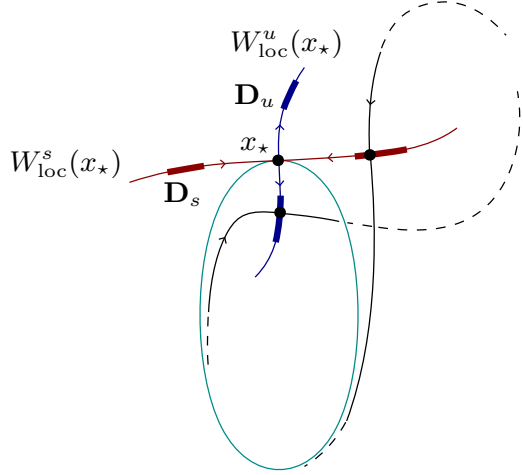}
\caption{The orbit of $x_\star$ and the fundamental domains $\mathbf{D}_{s/u}$.}
\end{figure}

\subsection{Moduli space of connections}

\label{sssection:distance-connections}

Let $\nabla^{\mc{E}}$ be a connection on the vector bundle $\mc{E}$. The connection $\nabla^{\mc{E}}$ induces a canonical connection $\nabla^{\End(\E)}$ on $\End(\mc{E})$ defined by the following Leibniz identity:
\begin{equation}
\label{equation:connection-end}
\nabla^{\mc{E}}\left(u(f)\right) = (\nabla^{\End(\E)}u)(f) + u(\nabla^{\mc{E}}f), \quad \forall u \in C^\infty(\M,\mathrm{End}(\mc{E})), \quad \forall f \in C^\infty(\M,\mc{E}).
\end{equation}
In particular, if ${\nabla'}^{\mc{E}} = \nabla^{\mc{E}} + A$ with $A \in C^\infty(\M,\mathrm{End}(\mc{E}))$ is another connection on $\mc{E}$, then:
\begin{equation}
\label{equation:diff-connection-end}
{\nabla'}^{\End(\E)}(\bullet) = \nabla^{\End(\E)}(\bullet) + [A, \bullet].
\end{equation}

Connections $\nabla_1^{\E}$ and $\nabla_2^{\E}$ are said to be \emph{gauge-equivalent}, denoted by $\nabla_1^{\E} \sim \nabla_2^{\E}$, if there exists a $p \in C^\infty(\M, \mathrm{Aut}(\E))$ such that $\nabla_1^{\E}(\bullet) = p^{-1}\nabla_2^{\mc{E}}(p\bullet) =: p^* \nabla_2^{\E}(\bullet)$ ($p^*$ stands for \emph{pullback}). Here $C^\infty(\M, \mathrm{Aut}(\E))$ denotes invertible sections of $\End(\E)$. Using the natural connection $\nabla^{\End(\E)}$, the pullback action can be rewritten as 
\begin{equation}\label{eq:gauge-equivalent-end}	
	p^*\nabla^{\E} = p^{-1} \nabla^{\End(\E)}p+ \nabla^{\mc{E}}.
\end{equation}
In the particular case where $\mc{E} = \mc{L}$ is a line bundle, this boils down to the existence of a nowhere vanishing function $p \in C^\infty(\M,\C)$ such that $p^*\nabla^{\E} = p^{-1}dp + \nabla^{\E}$.

Assume now $\mc{E}$ is endowed with a Hermitian inner product $h$ and $\nabla^{\mc{E}}$ is \emph{unitary}, that is, we have 
\[Y(h(s_1, s_2)) = h(\nabla_Y^{\E}s_1, s_2) + h(s_1, \nabla_Y^{\E} s_2), \quad \forall s_1, s_2 \in C^\infty(\M, \E), \quad \forall Y \in C^\infty(\M, T\M).\]
In this case, the set of gauge-equivalent connections is obtained by considering only sections $p \in C^\infty(M,\mathrm{U}(\E))$, that is, with values in the unitary group. Denote by $\mc{A}^{\mc{E}}_{\mathrm{unit}}$ the space of all unitary connections on $(\E, h)$ and by $\sim_{\mathrm{unit}}$ the equivalence relation obtained. Introduce the \emph{moduli space} of unitary connections on $\E$ by $\mathbb{A}_{\E}^{\mathrm{unit}} := \mc{A}_{\mc{E}}^{\mathrm{unit}}/\sim_{\mathrm{unit}}$. In order not to burden the notations, we will drop the subscript $\mathrm{unit}$ notation and \emph{from now on, we will always assume that the connections are unitary.} 

We can also work in limited regularity and consider $C^s_*$ regular connections for $s > 0$, instead of $C^\infty$; denote the space of such connections by $\mathcal{A}^s_{\E}$. Here $C^s_*$ denotes the scale of H\"older-Zygmund regularity, see \cite[Appendix A]{Taylor-91}. In this case, the gauge-equivalence is considered with $p \in C^{s+1}_*(\M, \mathrm{U}(\E))$ and the moduli space of such connections is denoted by $\mathbb{A}_{\E}^s$. For any $s \in (1, \infty]$, we introduce the following natural metric on $\mathcal{A}_{\E}^s$:
\[
\begin{split}
d(\nabla_1^{\mc{E}}, \nabla_2^{\mc{E}})& := \inf_{p \in C^\infty(\M,\mathrm{U}(\E))} \|p^{-1}\nabla_1^{\mathrm{End}(\E)}p + \nabla_1^{\mc{E}}-\nabla_2^{\mc{E}}\|_{L^\infty(\M, T^*\M \otimes \mathrm{End}(\E))}\\
&  =  \inf_{p \in C^\infty(\M,\mathrm{U}(\E))} \|p^* \nabla_1^{\E}  - \nabla_2^{\E}\|_{L^\infty(\M,T^* \M \otimes \mathrm{End}(\E))}.
\end{split}
\]

We have the following classical statement:

\begin{lemma}
\label{lemma:distance-connections}
The metric $d$ descends to a metric on $\mathbb{A}^s_\mc{E}$, for $1 < s \leq \infty$.
\end{lemma}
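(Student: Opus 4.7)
The plan is to verify that $d$ satisfies the three metric axioms on representatives in $\mathcal{A}^s_\E$, and then check that it descends to the moduli space. The workhorse throughout is the following unitarity identity: for any $q \in C^\infty(\M, \mathrm{U}(\E))$ and any $\End(\E)$-valued $1$-form $A$, one has $|q^{-1} A q|_h = |A|_h$ pointwise, so the $L^\infty$ norm is invariant under pointwise conjugation by unitary sections. Combined with the pullback law $p^* q^* = (qp)^*$ and formula \eqref{eq:gauge-equivalent-end}, this takes care of well-definedness on equivalence classes: if $\nabla_i' = q_i^*\nabla_i$ for $i = 1, 2$, then by unitary conjugation the $L^\infty$ norm of $(q_1 p)^*\nabla_1 - q_2^*\nabla_2$ equals that of $(q_1 p q_2^{-1})^*\nabla_1 - \nabla_2$, and the bijection $p \mapsto q_1 p q_2^{-1}$ of $C^\infty(\M, \mathrm{U}(\E))$ leaves the infimum unchanged. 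Symmetry follows from the identity $(p^{-1})^*\nabla_2 - \nabla_1 = -p(p^*\nabla_1 - \nabla_2)p^{-1}$ together with the bijection $p \mapsto p^{-1}$, and the triangle inequality follows from the composition formula
\[
	(p_1 p_2)^*\nabla_1 - \nabla_3 = (p_2^*\nabla_2 - \nabla_3) + p_2^{-1}(p_1^*\nabla_1 - \nabla_2)p_2,
\]
again using unitary invariance of the $L^\infty$ norm.

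The hard part is non-degeneracy. The ``if'' direction is immediate. For the converse, suppose $d(\nabla_1, \nabla_2) = 0$ and choose a sequence $p_n \in C^\infty(\M, \mathrm{U}(\E))$ with $\|p_n^*\nabla_1 - \nabla_2\|_{L^\infty} \to 0$. Rewriting via \eqref{eq:gauge-equivalent-end} and multiplying through by $p_n$ gives
\[
	\nabla_1^{\End(\E)} p_n = p_n(\nabla_2 - \nabla_1) + p_n\bigl(p_n^*\nabla_1 - \nabla_2\bigr),
\]
with the right-hand side uniformly bounded in $L^\infty$, since $p_n$ takes values in the compact bundle $\mathrm{U}(\E)$ and $\nabla_2 - \nabla_1 \in C^s_* \subset L^\infty$. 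Hence $\{p_n\}$ is uniformly Lipschitz, and Arzel\`a--Ascoli yields, up to extraction, uniform convergence $p_n \to p$ with $p \in C^0(\M, \mathrm{U}(\E))$ satisfying the distributional identity $\nabla_1^{\End(\E)} p = p(\nabla_2 - \nabla_1)$, equivalently $p^*\nabla_1 = \nabla_2$.

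The final, most delicate step is to upgrade $p$ from $C^0$ to $C^{s+1}_*$, which is required for the equivalence $[\nabla_1] = [\nabla_2]$ to hold in $\mathbb{A}^s_\E$. The plan is to bootstrap in H\"older--Zygmund regularity: given $p \in C^k_*$ with $0 \leq k \leq s$, the identity $\nabla_1^{\End(\E)} p = p(\nabla_2 - \nabla_1)$ together with the standard product estimate in $C^k_*$ spaces shows $\nabla_1^{\End(\E)} p \in C^k_*$, and the elementary gain of one derivative (locally $du = F$ with $F \in C^k_*$ implies $u \in C^{k+1}_*$) then gives $p \in C^{k+1}_*$. Iterating finitely many times produces $p \in C^{s+1}_*$; for $s = \infty$ the same scheme gives $p \in C^\infty$. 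This proves $[\nabla_1] = [\nabla_2]$ in $\mathbb{A}^s_\E$ and completes the proof.
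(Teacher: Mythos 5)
Your proof is correct, but the key step (non-degeneracy) is handled by a genuinely different route than the paper's. The paper extracts a limit of the minimizing sequence $p_j$ only in a \emph{weak} topology, using compactness of the embedding $L^\infty \hookrightarrow C_*^{-\eps/2}$, and then upgrades the distributional limit to $C^{s+1}_*$ in one stroke by viewing $U \mapsto \nabla_1^{\End(\E)}U + U(\nabla_1^{\E} - \nabla_2^{\E})$ as a first-order elliptic operator with $C^s_*$ coefficients and invoking rough (paradifferential) elliptic regularity from Taylor. You instead observe that the equation forces a uniform $L^\infty$ bound on $\nabla_1^{\End(\E)}p_n$, hence a uniform Lipschitz bound, so Arzel\`a--Ascoli gives a \emph{strong} ($C^0$, in fact Lipschitz) limit --- which in particular makes pointwise unitarity of $p$ immediate, a point the paper's weak-limit argument leaves implicit --- and then you bootstrap by hand from the local identity $dp = pA_2 - A_1p$ using product estimates and the characterization $\|u\|_{C^{k+1}_*} \simeq \|u\|_{L^\infty} + \|du\|_{C^k_*}$. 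This is more elementary and avoids the rough elliptic machinery, at the cost of a finite iteration; the paper's argument is shorter once the cited regularity theorem is granted. Two minor points: at the bottom of your bootstrap the product estimate in $C^0_*$ is not valid as stated (elements of $C^0_*$ need not be bounded), but this is harmless since your limit $p$ is unitary-valued and Lipschitz, so you may start the iteration from $L^\infty$ (or any positive H\"older exponent); and in the descent to the moduli space both you and the paper quietly pass between the smooth gauge group appearing in the definition of $d$ and the $C^{s+1}_*$ gauge group defining $\mathbb{A}^s_{\E}$, which strictly speaking needs a density/continuity remark --- your level of detail there matches the paper's.
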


\begin{proof}
Since the pullback action by a section $p \in C^\infty(\M, \mathrm{U}(\E))$ is an isometry of the fibres of $\End(\E)$, we know that $d$ descends to $\mathbb{A}_{\E}$. The non-negativity of $d$ is immediate, as well as the triangle inequality and symmetry. It remains to check that $d(\nabla_1^{\mc{E}},\nabla_2^{\mc{E}}) = 0$ (where $\nabla_1^{\E}$ and $\nabla_2^{\E}$ are $C_*^s$ regular) if and only if the two connections are gauge-equivalent. 

Consider a sequence $p_j \in C^\infty(\M,\mathrm{U}(\E))$ such that $p_j^{-1}\nabla_1^{\mathrm{End}(\E)}p_j + \nabla_1^{\mc{E}}-\nabla_2^{\mc{E}} \rightarrow_{j \rightarrow \infty} 0$ in $L^\infty$, which is equivalent to $\nabla_1^{\mathrm{End}(\E)}p_j +p_j( \nabla_1^{\mc{E}}-\nabla_2^{\mc{E}}) \rightarrow_{j \rightarrow \infty} 0$. Write $s=1+\eps$ for $\eps > 0$. Since $\|p_j\|_{L^\infty} = 1$ and $L^\infty(\M, \End(\E)) \hookrightarrow C_*^{-\eps/2}(\M, \End(\E))$ is compact, we can always assume that $p_j \rightarrow p$ in $C_*^{-\eps/2}(\M, \End(\E))$. Therefore, 
\[
\nabla_1^{\mathrm{End}(\E)}p_j +p_j( \nabla_1^{\mc{E}}-\nabla_2^{\mc{E}}) \rightarrow_{j \rightarrow \infty} \nabla_1^{\mathrm{End}(\E)}p+p( \nabla_1^{\mc{E}}-\nabla_2^{\mc{E}}) = 0,
\]
in $C_*^{-\eps/2-1}(\M, T^*\M \otimes \End(\E))$ (here we use the continuity of the multiplication $C_*^{-\eps/2} \times C_*^{1 + \eps} \to C_*^{-\eps/2}$). Since $\nabla_1^{\mc{E}}-\nabla_2^{\mc{E}} \in C^s_*(\M, T^*\M \otimes \End(\E))$, the operator 
\[U \mapsto \nabla_1^{\mathrm{End}(\E)} U+U( \nabla_1^{\mc{E}} - \nabla_2^{\mc{E}})\] 
is a differential operator which is elliptic of order $1$ with regularity $C^s_*$. Indeed, its principal symbol is that of $\nabla_1^{\mathrm{End}(\E)}$ and
\[
\sigma_{\mathrm{princ}}(\nabla_1^{\mathrm{End}(\E)})(x,\xi) : \mathrm{End}(\E_x) \rightarrow T^*_x \M \otimes \mathrm{End}(\E_x), \quad \sigma_{\mathrm{princ}}(\nabla_1^{\mathrm{End}(\E)})(x,\xi)u = \xi \otimes u,
\]
for $(x,\xi) \in T^*\M \setminus \left\{ 0 \right\}$ and $u \in \mathrm{End}(\E_x)$. This is clearly injective for $\xi \neq 0$. As a consequence, by rough elliptic theory (see \cite[Theorem 2.2.C p.55]{Taylor-91}), and using that $p \in C^{-\eps/2}_*(\M, \End(\E))$, we obtain that $p \in C^{s+1}_*(\M, \mathrm{U}(\E))$ and $\nabla_1^{\End(\E)}p+p( \nabla_1^{\mc{E}}-\nabla_2^{\mc{E}}) = 0$. In other words, the two connections are gauge equivalent (see \eqref{eq:gauge-equivalent-end}).
\end{proof}

\subsection{Linear unitary extensions and uniform boundedness}\label{ssec:linear-unitary-extensions}

Let $(\varphi_t)_{t \in \mathbb{R}}$ be a smooth Anosov flow on $\M$ generated by a smooth vector field $X$. Denote by $\mc{B}$ the set of all triples $(\E, h, \X)$, where $(\E, h) \to \M$ is a Hermitian vector bundle equipped with a linear differential operator $\X$ of order $1$ satisfying the following Leibniz rule
\begin{equation}
\label{equation:leibniz}
\X(fu) = (Xf) u + f \X u, \quad  \forall f \in C^\infty(\M),\,\, u \in C^\infty(\M,\E),
\end{equation}
and such that the norm of the propagator $e^{t\X}$ on $C^0(\M,\E)$ is constant equal to $1$ for all $t$. In other words, this last condition is equivalent to the fact that the parallel transport induced by $\X$ is unitary (with respect to $h$), or still equivalently we have
\begin{equation}\label{eq:unitary-operator}
	X h(u_1, u_2) = h(\X u_1, u_2) + h(u_1, \X u_2), \quad u_1, u_2 \in C^\infty(\M, \E). 
\end{equation}
We call such a triple a \emph{linear unitary extension} of $(\varphi_t)_{t \in \mathbb{R}}$; in the case we have only a vector bundle $\E$ equipped with $\X$ satisfying \eqref{equation:leibniz} we refer to the pair $(\E, \X)$ as a \emph{linear extension}. Then, define the moduli space $\B$ of linear unitary extensions as the quotient $\B := \mc{B}/\sim$ of $\mc{B}$ by the equivalence relation $\sim$, where $(\E_1, h_1, \X_1) \sim (\E_2, h_2, \X_2)$ if and only if there is a fibrewise unitary map $p \in C^\infty(\M,\mathrm{U}(\E_1, \E_2))$ such that $p^*\X_2(\bullet) := p^{-1}\X_2(p\bullet) = \X_1(\bullet)$.

Given $(\E_j, h_j, \X_j) \in \mc{B}$ for $j = 1, 2$, it is possible to introduce a first order differential operator $\X^{\Hom}$ on the homomorphism vector bundle $\Hom(\E_1, \E_2)$ (equipped with the natural Hermitian metric $\Hom(h_1, h_2)$ induced by $h_1$ and $h_2$) by the following formula
\begin{equation}\label{eq:hom-operator}
	\X_2(H u) =\X^{\Hom}(H) u + H \X_1 u, \quad \forall H \in C^\infty(\M, \Hom(\E_1, \E_2)),\,\, u \in C^\infty(\M, \E_1).
\end{equation}
It follows that the operator $\X^{\Hom}$ satisfies \eqref{equation:leibniz} and hence generates a linear unitary extension $(\Hom(\E_1, \E_2), \Hom(h_1, h_2), \X^{\Hom})$. Clearly $\X^{\Hom}$ depends on $\X_1$ and $\X_2$; we suppress these from the notation for simplicity and note that this dependence will be clear from context.

Finally, to obtain stability in the holonomy inverse problem we will impose certain uniformity bounds on triples in $\mc{B}$. It will be convenient to embed vector bundles in a large trivial vector bundle, which is standard but we give its proof for completeness.

\begin{proposition}\label{prop:embedding}
	Let $r \in \mathbb{Z}_{\geq 1}$ and let $(U_i)_{i = 1}^k$ be a cover of $\M$ by geodesically convex balls. Assume $b := (\E, h, \X) \in \mc{B}$ satisfies $\rk(\E) \leq r$. Then there exists a unitary embedding of vector bundles $p: \E \xhookrightarrow{} \M \times (\mathbb{C}^{rk}, h_{\mathrm{std}})$, where $h_{\mathrm{std}}$ is the canonical Hermitian metric on $\mathbb{C}^{rk}$. In particular, points $b \in \mc{B}$ as above determine linear unitary extensions $(p_*\E, h_\mathrm{std}, p_*\X)$ and consequently pairs $(\pi_{p_*\E}, Q(p_*\X))$, where $\pi_{p_*\E}$ is the orthogonal projector onto $p_*\E$ and $Q(p_*\X)$ is a skew-Hermitian section of $\End(p(\E))$.
\end{proposition}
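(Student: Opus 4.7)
My plan is a standard partition-of-unity construction for the embedding, followed by an identification of $Q(p_*\X)$ as a zero-order skew-Hermitian operator.

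\textbf{Construction of the embedding.} I would use the fact that each $U_i$, being geodesically convex hence contractible, admits a smooth trivialization of $\E|_{U_i}$; applying fibrewise Gram--Schmidt with respect to $h$ then yields fibrewise-unitary trivializations $\phi_i: \E|_{U_i} \to U_i \times \C^{\rk(\E)}$, and composing with the standard isometric inclusion $\C^{\rk(\E)} \hookrightarrow \C^r$ I would view each $\phi_i$ as a fibrewise isometric embedding into $U_i \times \C^r$. To glue, I would pick smooth cutoffs $(\chi_i)_{i=1}^k$ with $\supp(\chi_i) \Subset U_i$ and $\sum_i \chi_i^2 \equiv 1$ on $\M$ (obtainable by normalizing an ordinary subordinate partition of unity $(\psi_i)$ via $\chi_i := \psi_i/(\sum_j \psi_j^2)^{1/2}$), and set
$$p: \E \to \M \times \C^{rk}, \qquad p(e) := \bigl(x,\ \chi_1(x)\phi_1(e),\ \dots,\ \chi_k(x)\phi_k(e)\bigr), \quad e \in \E_x,$$
reading $\chi_i(x)\phi_i(e)$ as $0$ for $x \notin U_i$. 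Smoothness of $p$ is automatic because each $\chi_i$ vanishes near $\partial U_i$, and the identity $\sum_i \chi_i^2 \equiv 1$ immediately gives fibrewise norm preservation, so $p$ will be the required unitary embedding.

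\textbf{The ``in particular'' statement.} Given $p$, the extension $(p(\E), h_{\mathrm{std}}, p_*\X)$ will inherit the Leibniz rule \eqref{equation:leibniz} and the unitarity identity \eqref{eq:unitary-operator} by conjugation. I would then verify that the reference operator $\pi_{p(\E)} \circ (X \times \id_{p(\E)})$, acting on $C^\infty(\M, p(\E))$, also satisfies both: the Leibniz rule because $\pi_{p(\E)} u = u$ for $u \in C^\infty(\M, p(\E))$, and \eqref{eq:unitary-operator} because the self-adjoint projector $\pi_{p(\E)}$ drops out when pairing against sections of $p(\E)$. Subtracting the two operators kills the first-order part, so $Q(p_*\X)$ will be a smooth section of $\End(p(\E))$, and as the difference of two operators each satisfying \eqref{eq:unitary-operator} it will be pointwise skew-Hermitian with respect to $h_{\mathrm{std}}$.

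The hardest step -- and really the only subtle one -- is the Leibniz/unitarity verification for $\pi_{p(\E)} \circ (X \times \id_{p(\E)})$, which reduces to the observation that the projector acts as the identity on $p(\E)$-valued sections. Everything else is routine partition-of-unity bookkeeping.
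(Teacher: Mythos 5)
Your proposal is correct and follows essentially the same route as the paper: a quadratic partition of unity ($\sum_i \chi_i^2 \equiv 1$) combined with fibrewise-unitary local trivializations (equivalently, local orthonormal frames of $\E|_{U_i}$) yields the unitary embedding, and the paper's map $p(x,v) = \big(x, (h(v,\rho_i(x)\e_{ij}(x)))_{ij}\big)$ is exactly your $\big(x,\chi_i(x)\phi_i(v)\big)_i$ in frame coordinates. Your verification of the ``in particular'' part — that $\pi_{p(\E)}\circ(X\times\id)$ satisfies the Leibniz rule and \eqref{eq:unitary-operator}, so the difference $Q(p_*\X)$ is a zero-order, pointwise skew-Hermitian section of $\End(p(\E))$ — is precisely the argument the paper leaves implicit.
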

\begin{proof}
	Assume without loss of generality that $\rk(\E) = r$; in fact the proof gives a unitary embedding into $\M \times \mathbb{C}^{k \rk(\E)}$ which embeds unitarily into $\M \times \mathbb{C}^{rk}$. Let $(\rho_i)_{i = 1}^k$ be a quadratic partition of unity subordinate to the cover $(U_i)_{i = 1}^k$, i.e. satisfying $\sum_{i = 1}^k \rho_i^2 \equiv 1$ and $\supp(\rho_i) \subset U_i$. For each $i =1, \dotsc, k$, there exists a smooth orthonormal frame $(\e_{ij})_{j = 1}^{r}$ of $\E|_{U_i}$; define
	\[
		p: \E \to \M \times \mathbb{C}^{k \times r}, \quad p(x, v) := \big(x, \big(h(v, \rho_i(x) \e_{ij}(x))\big)_{ij}\big),\quad i = 1,\dotsc, k, \quad j = 1,\dotsc, r.
	\]
	Then we have for arbitrary $x \in \M$, $u, v \in \E(x)$
	\begin{align*}
		h_{\mathrm{std}}(p(x, v), p(x, u)) &= \sum_{i, j} h(v, \rho_i(x) \e_{ij}(x)) h(\rho_j(x) \e_{ij}(x), u)\\ 
		&= \sum_{i, j} \rho_i(x)^2 h(v, \e_{ij}(x) h(u, \e_{ij}(x))) = h(v, u),
	\end{align*}
	where $h_{\mathrm{std}}$ is the standard Hermitian metric on $\mathbb{C}^{kr}$. Therefore $p$ is a unitary embedding, and the triple $(p(\E), h_{\mathrm{std}}, p_*\X)$ is a linear unitary extension equivalent to $(\E, h, \X)$, and in what follows we identify the two triples.
	
	 Denote by $\pi_{\E}$ the orthogonal projector onto $\E \subset \M \times \mathbb{C}^{kr}$. Using the Leibniz property \eqref{equation:leibniz} and the property \eqref{eq:unitary-operator}, we conclude that \[	\X = \pi_{\E} X \times \id_{\E} + Q(\X),\] 
	 where $Q(\X)$ is a skew-Hermitian endomorphism of $\E$ and $X \times \id_{\E}$ is the operator obtained by differentiating a section of $\E$ in each coordinate using the vector field $X$. Conversely, every pair $(\pi_{\E}, Q(\X))$ defines a linear unitary extension $(\E, h_{\mathrm{std}}, \X)$ by reversing this procedure, which completes the proof.
\end{proof}
\begin{remark}\rm
	Any two Hermitian metrics on a vector bundles are unitarily equivalent. Indeed, given two Hermitian metrics $h_1$ and $h_2$ on $\E$ there exists a canonical unitary isomorphism $p: (\E, h_1) \to (\E, h_2)$ defined as follows. At $x \in \M$, set $p(x): (\E(x), h_1) \to (\E(x), h_2)$ to be the unique unitary linear map which is Hermitian and positive definite with respect to $h_2$. The smoothness of such a map follows from a straightforward argument in a local trivialisation and the fact that $A \mapsto \sqrt{A}$ is smooth on the manifold of Hermitian and positive definite matrices.
	
	Also, we note that the map $p$ in Proposition \ref{prop:embedding} is \emph{not} canonical: it depends on the choices of orthonormal frames $(\e_{ij})_{j = 1}^r$ of $\E|_{U_i}$.
\end{remark}

Proposition \ref{prop:embedding} says that any triple $b \in \mc{B}$ is unitarily equivalent to a triple $(\mc{E}, h_{\mathrm{std}}, \X)$ and is in turn characterised by the pair $(\pi_{\E}, Q(\X))$. We say that $b$ is \emph{uniformly bounded by $(K, r)$} where $K > 0$ and $r \in \mathbb{Z}_{\geq 1}$ if there exists a unitary embedding $p$ as in Proposition \ref{prop:embedding}, such that:
\begin{equation}\label{eq:uniform-bound}
	\rk(\E) \leq r, \quad \|\pi_{\E}\|_{C^1(\M, \End(\mathbb{C}^{rk}))} \leq K, \quad \|\pi_{\E} Q(\X) \pi_{\E}\|_{C^1(\M, \End(\mathbb{C}^{rk}))} \leq K.
\end{equation}
We introduce the notation 
\[\widetilde{Q}(\X) := \pi_{\E} Q(\X) \pi_{\E} \in C^\infty(\M, \End(\mathbb{C}^{rk}))\] for future reference and note that $\widetilde{Q}(\X)$ is skew-Hermitian. For a vector field $Y$ we denote by $Y^\flat$ the $1$-form defined by the musical isomorphism through the fixed background metric $g$ on $\M$.

We finish this section with a lemma associating a unitary connection to every linear unitary extension, and transforming the uniform bound \eqref{eq:uniform-bound} to a uniform bound on \emph{curvature} of this connection.

\begin{lemma}\label{lemma:uniform-bound-curvature}
	For each triple $(\E, h_{\mathrm{std}}, \X)$ with $\rk(\E) \leq r$, there exists an associated unitary connection $\nabla = \nabla(\X)$ on $(\E, h_{\mathrm{std}})$ such that $\nabla_X = \X$. Moreover, there exists a constant $C > 0$ (depending only on $\M$, $X$, and $g$), such that the curvature $F_{\nabla}$ of the connection $\nabla$ satisfies
	\[\|F_{\nabla}\|_{C^0} \leq C(\|\pi_{\E}\|_{C^1}^2 + \|\widetilde{Q}(\X)\|_{C^1}).\]
\end{lemma}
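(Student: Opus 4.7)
The plan is to construct $\nabla$ as a controlled perturbation of the Grassmann connection associated with the embedding $p \colon \E \hookrightarrow \M \times (\mathbb{C}^{rk}, h_{\mathrm{std}})$ supplied by Proposition \ref{prop:embedding}, and then to bound each piece of the resulting curvature using classical identities. Identifying $\E$ with $p(\E)$, recall that $\X = \pi_\E (X \times \id_\E) + Q(\X)$ with $Q(\X) \in C^1(\M, \End(\E))$ skew-Hermitian, and that $\widetilde{Q}(\X) = \pi_\E Q(\X) \pi_\E$ agrees with $Q(\X)$ on $\E$ (and vanishes on $\E^\perp$). Define the Grassmann connection by $\nabla_0 u := \pi_\E \, du$ for sections $u$ of $\E$ viewed as $\mathbb{C}^{rk}$-valued; this is unitary with respect to $h_{\mathrm{std}}|_\E$ and satisfies $\nabla_{0,X} = \pi_\E(X \times \id_\E)$.

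Next, fix once and for all a smooth $1$-form $\alpha := X^\flat/g(X, X)$, which is well-defined because the Anosov vector field $X$ has no zeros, and set
\[
	A := \alpha \otimes \widetilde{Q}(\X) \in C^1(\M, T^*\M \otimes \End(\E)), \qquad \nabla := \nabla_0 + A.
\]
Then $\nabla$ is unitary because $\widetilde{Q}(\X)$ is skew-Hermitian, and the normalisation $\alpha(X) \equiv 1$ yields $\nabla_X u = \pi_\E (Xu) + \widetilde{Q}(\X) u = \X u$ on sections of $\E$, so $\nabla_X = \X$ as required.

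It remains to estimate $F_\nabla = F_{\nabla_0} + d^{\nabla_0} A + A \wedge A$. Because $\alpha$ is scalar, $(A \wedge A)(Y, Z) = \alpha(Y)\alpha(Z)\,[\widetilde{Q}(\X), \widetilde{Q}(\X)] = 0$, so the last term vanishes. For the first term, differentiating $\pi_\E u = u$ for $u \in \E$ and projecting yields the classical identity $F_{\nabla_0} = \pi_\E \, d\pi_\E \wedge d\pi_\E$ on $\E$, hence $\|F_{\nabla_0}\|_{C^0} \leq \|d\pi_\E\|_{C^0}^2 \leq \|\pi_\E\|_{C^1}^2$. For the middle term, $d^{\nabla_0}A = d\alpha \otimes \widetilde{Q}(\X) + \alpha \wedge \nabla_0^{\End}\widetilde{Q}(\X)$, and a direct Leibniz computation shows that the induced Grassmann connection on $\End(\E)$ acts by $\nabla_0^{\End}\widetilde{Q}(\X) = \pi_\E \circ d\widetilde{Q}(\X) \circ \pi_\E$. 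Using $\|\pi_\E\|_{C^0} \leq 1$, one deduces $\|d^{\nabla_0}A\|_{C^0} \leq C\|\widetilde{Q}(\X)\|_{C^1}$, with $C$ depending only on $\alpha$ (hence only on $X$ and $g$). Summing the two contributions produces the bound claimed in the statement.

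No genuine obstacle arises; the two points worth isolating are the cancellation $A \wedge A = 0$, which follows from the scalar nature of $\alpha$, and the fact that $\nabla_0^{\End}$ is just projected differentiation, so covariantly differentiating $\widetilde{Q}(\X)$ introduces no additional factors of $\|\pi_\E\|_{C^1}$. Together, these two observations are what make the estimate \emph{linear} in $\|\widetilde{Q}(\X)\|_{C^1}$, as required.
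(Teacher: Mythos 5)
Your construction is essentially the paper's: the paper defines the very same connection $\nabla = \pi_{\E}\, d + \widetilde{Q}(\X)\, X^{\flat}\wedge$ and expands $\nabla^2$ directly, producing exactly the three types of terms you control via the decomposition $F_{\nabla} = F_{\nabla_0} + d^{\nabla_0}A + A \wedge A$ (the $d\pi_{\E}\wedge d\pi_{\E}$ piece quadratic in $\|\pi_{\E}\|_{C^1}$ and pieces linear in $\|\widetilde{Q}(\X)\|_{C^1}$). The only difference is cosmetic but welcome: your normalisation $\alpha = X^{\flat}/g(X,X)$ makes $\nabla_X = \X$ hold without implicitly assuming $g(X,X)=1$, which the paper's choice of $X^{\flat}$ glosses over.
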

\begin{proof}
	Define the unitary connection $\nabla$ by the covariant derivative formula
	\[\nabla = \pi_{\E} d + \widetilde{Q}(\X) X^{\flat}\wedge,\]
	where the unitarity follows from the fact that $\widetilde{Q}(\X)$ is skew-Hermitian. (This is the restriction of the connection $d + \widetilde{Q}(\X) X^{\flat} \wedge$ from $\M \times \mathbb{C}^{rk}$ to $\E$.) It follows that $\nabla_X = \pi_{\E} X \times \id_{\E} + Q(\X) = \X$.
	
	Next, we compute the curvature of $\nabla$, which is defined as the two form with values in $\End(\E)$ by the formula $F_{\nabla} = \nabla^2$ (the covariant derivative is naturally extended to form valued sections of $\E$). Then for a section $s$ of $\E \subset \M \times \mathbb{C}^{rk}$ (for simplicity we drop the argument in $\widetilde{Q}(\X)$)
	\begin{align*}
		F_{\nabla}s &= \nabla^2s = (\pi_{\E} d + \widetilde{Q} X^{\flat}\wedge)(\pi_{\E} ds + \widetilde{Q} X^{\flat}s)\\
		&= \pi_{\E} d\pi_{\E} \wedge ds + \pi_{\E} d\widetilde{Q} \wedge X^\flat s + \widetilde{Q} (dX^{\flat})s + \widetilde{Q} X^{\flat} \wedge (- ds + \pi_{\E} ds) + \widetilde{Q} X^{\flat} \wedge \widetilde{Q} X^{\flat} s\\
		&= (d\pi_{\E} - (d\pi_{\E}) \pi_{\E}) \wedge ds + \pi_{\E} d\widetilde{Q} \wedge X^\flat s + \widetilde{Q} (dX^{\flat})s - \widetilde{Q} X^{\flat} \wedge d\pi_{\E} s\\
		&= d\pi_{\E} \wedge d\pi_{\E} s + \pi_{\E} d\widetilde{Q} \wedge X^\flat s + \widetilde{Q} (dX^{\flat})s + d\widetilde{Q} (\id - \pi_{\E}) \wedge X^{\flat} s.
	\end{align*}
	where in the second line we used $d^2 = 0$, in the third line we used $X^{\flat} \wedge X^{\flat} = 0$, the fact that $d\pi_{\E} s + \pi_{\E} ds = ds$ (coming from the fact that $\pi_{\E}s = s$), and $\pi_{\E}^2 = \pi_{\E}$, and in the fourth lines we used again that $\pi_{\E} ds = ds - d\pi_{\E}s$ and $\widetilde{Q} d\pi_{\E} = d\widetilde{Q}(\id - \pi_{\E})$ (coming from the fact that $\widetilde{Q} = \widetilde{Q} \pi_{\E}$). This shows that
	\[F_{\nabla} = d\pi_{\E} \wedge d\pi_{\E} + \widetilde{Q} (dX^{\flat}) + (d\widetilde{Q} + [\pi_{\E}, d\widetilde{Q}]) \wedge X^{\flat},\]
	and the main estimate follows directly.
\end{proof}
\begin{remark}\rm
	Compared to the connection constructed in Lemma \ref{lemma:uniform-bound-curvature}, a more intrinsic choice of a connection $\nabla$ satisfying $\X = \nabla_X$ is the dynamical connection (see Remark \ref{remark:dynamical-connection} below). However, this connection has the downside that it is only H\"older regular, and hence its curvature is only distributionally defined which is not well suited for our repeated applications of the Ambrose-Singer formula in Section \ref{sec:approximate-livsic}.
\end{remark}

\subsection{Parry's representation}\label{ssec:parry-rep}

Let $(\E, h, \X) \in \mc{B}$ be a linear unitary of the Anosov flow $(\varphi_t)_{t \in \mathbb{R}}$; we assume the notation of \S \ref{ssec:fundamental-domain} and \S \ref{ssec:linear-unitary-extensions}. Define \emph{Parry's free monoid} as the formal free monoid generated by $\mc{H}$-words, namely,
\[
	\mathbf{G} := \left\{ \gamma_{1}^{k_1} \dotsb \gamma_n^{k_n} ~|~ n, k_i \in \Z_{\geq 0},\, \gamma_i \in \mc{H},\, i = 1, \dotsc, n\right\}.
\]
The empty word corresponds to the identity element $\mathbf{1}_{\G}$. Also observe that $\gamma_\star \in \G$ is a specific element.
 
Write $(\Phi_t)_{t \in \mathbb{R}}: \E \to \E$ for the parallel transport flow induced by the cocycle $(\E, h, \X)$ on $\E$. If $\pi: \E \to \M$ is the projection, it satisfies $\pi \circ \Phi_t = \varphi_t \circ \pi$, and $\Phi_t: \E(x) \to \E(\varphi_t x)$ is an isometry for for every $t \in \mathbb{R}$ and $x \in \M$; in other words, $(\Phi_t)_{t \in \mathbb{R}}$ is an isometric extension of $(\varphi_t)_{t \in \mathbb{R}}$. Moreover, let $\nabla$ be a unitary connection on $(\E, h)$ such that $\nabla_X = \X$ (for instance, take the connection provided by Lemma \ref{lemma:uniform-bound-curvature}). If $x, y \in \M$ are at distance smaller than the injectivity radius of the fixed backround metric, write $C_{x \to y}: \E(x) \to \E(y)$ for the parallel transport with respect to $\nabla$ along the unique unit speed short geodesic connecting $x$ to $y$. Also, write $C(x, t)e := \Phi_t(x, e)$ for the parallel along the flow trajectories, where $x \in \M$, $e\in \E(x)$, and $t \in \mathbb{R}$.

We are now in shape to define \emph{stable/central/unstable holonomies}. If $x, y \in \M$ and $e \in \E(x)$ we set:
\begin{itemize}
	\item if $y = \varphi_tx$ for some $t \in \mathbb{R}$, then: $\mathbf{H}_{x \to y}e := \Phi_t(x, e)$;
	\item if $y \in W^{s}(x)$, then $f := \mathbf{H}^{s}_{x \to y}e \in \E(y)$ is defined to be the unique vector such that $d_{\E}(\Phi_t(x, f), \Phi_t(x, e)) \to 0$ as $t \to \infty$, where $d_{\E}(\bullet, \bullet)$ is the distance on $\E$. This is well-defined by the Ambrose-Singer lemma, see \cite[Lemma 3.14]{Cekic-Lefeuvre-22} and in fact $\mathbf{H}^{s}_{x \to y}: \E(x) \to \E(y)$ is a unitary isomorphism. More precisely, we have
	\begin{equation}\label{eq:stable-holonomy}
		\mathbf{H}_{x \to y}^s := \lim_{t \to \infty} C(\varphi_t y, -t) C_{\varphi_t x\to \varphi_t y}C(x, t).
	\end{equation}
	\item if $y \in W^{y}(x)$, then $f := \mathbf{H}^{u}_{x \to y}e \in \E(y)$ is defined to be the unique vector such that $d_{\E}(\Phi_{-t}(x, f), \Phi_{-t}(x, e)) \to 0$ as $t \to \infty$, where this is as above well-defined by the Ambrose-Singer lemma, and in fact $\mathbf{H}^{u}_{x \to y}: \E(x) \to \E(y)$ is a unitary isomorphism.
\end{itemize}
These definitions are independent of the choice of $\nabla$, as follows once more by an application of the Ambrose-Singer lemma. From the defining properties it follows that when $y, z \in W^{s/u}(x)$, one has the property $\mathbf{H}^{s/u}_{y \to z} \mathbf{H}^{s/u}_{x \to y} = \mathbf{H}^{s/u}_{x \to z}$.

\begin{remark}\rm\label{remark:dynamical-connection}
	The \emph{dynamical connection} is the connection whose parallel transports along curves in stable/unstable foliations and the flow direction, are given by stable/unstable and the central holonomies, respectively. It can be shown that this connection is H\"older regular, flat when restricted to stable/unstable leaves, and is an interesting object on its own for further studies.
\end{remark}

Finally, \emph{Parry's representation} $\rho: \mathbf{G} \to \operatorname{U}(\E(x_\star))$ is defined as
\begin{equation}\label{eq:parry-rep-def}
	\rho(\gamma) := \mathbf{H}^s_{x_s(\gamma) \to x_\star} \mathbf{H}^c_{x_u(\gamma) \to x_s(\gamma)} \mathbf{H}^u_{x_\star \to x_u(\gamma)}, \quad \gamma \in \mc{H},
\end{equation}
and extending multiplicatively to $\mathbf{G}$. It will be convenient for us to approximate $\rho(\gamma)$ by parallel transports over long orbits, i.e. to take one step back and use the limit definition as in \eqref{eq:stable-holonomy}. To this end, for $m, n \in \mathbb{Z}_{\geq 0}$ we define
\[
	\rho(\gamma; m, n) := C(x_\star, - nT_\star) C_{x_s(\gamma; n) \to x_\star} C(x_u(\gamma; m), x_s(\gamma; n)) C_{x_\star \to x_u(\gamma; m)}C(x_\star, - mT_\star).
\]
Another application of the Ambrose-Singer lemma (see \cite[Lemma 3.14]{Cekic-Lefeuvre-22}) together with \eqref{eq:estimate-translate} shows that
\begin{equation}\label{eq:approximate-parry}
	\rho(\gamma) = \rho(\gamma; m, n) + \|F_\nabla\|_{C^0}\mc{O}(e^{-\min(m, n) T_\star \mu}), \quad m, n \to \infty,
\end{equation}
where $F_\nabla$ is the curvature of $\nabla$. We will revisit the notion of Parry's representation in Section \ref{sec:approximate-livsic}.

\subsection{Pollicott-Ruelle resonances}\label{ssec:P-R-resonances}
In the past twenty years there was a significant progress in the theory of anisotropic Banach or Sobolev spaces for hyperbolic dynamics, which significantly increased the understanding of Pollicott-Ruelle resonances -- the eigenvalues of the flow generator acting in these spaces. For more details, see Baladi \cite{Baladi-05}, Baladi-Tsujii \cite{Baladi-Tsujii-07}, Blank-Keller-Liverani \cite{Blank-Keller-Liverani-02}, Gou\"ezel-Liverani \cite{Gouezel-Liverani-06}, and Liverani \cite{Liverani-04}. In this paper, we take the viewpoint of microlocal methods for dynamical resonances, introduced by Faure-Sj\"ostrand \cite{Faure-Sjostrand-11} and Dyatlov-Zworski \cite{Dyatlov-Zworski-16}.

Let $(\E, \X)$ be a linear extension of the Anosov flow $(\varphi_t)_{t\in \mathbb{R}}$ generated by a smooth vector field $X$. Let $m \in C^\infty(T^*\M, [-1, 1])$ be a function which is $0$-homogeneous in $\xi$ for $|\xi| \geq 1$, and satisfies
\begin{equation}\label{eq:order-function}
	m = -1\,\, \mathrm{near}\,\, E_u^*, \quad m = 1\,\, \mathrm{near}\,\, E_s^*, \quad \mathbb{X}m \leq 0 \,\, \mathrm{for}\,\, |\xi| \geq 1,
\end{equation}
where $\mathbb{X}$ is the Hamiltonian lift of $X$, that is, the generator of $(x, \xi) \mapsto (\varphi_tx, \xi \circ d\varphi_{t}^{-1}(x))$, and define $G(x, \xi) := m(x, \xi) \log(1 + |\xi|)$. Denote by $\Op(\bullet)$ the quantisation procedure on $\E \to \M$ and for $s \geq 0$ introduce the anisotropic Sobolev spaces
\[\mc{H}^{s}_\pm(\M, \E) := \mc{H}_{\pm}^s := \Op(e^{\mp sG} \times \id_{\E}) L^2(\M, \E),\]
where the space $L^2(\M, \E)$ is with defined with respect to a smooth measure on $\M$ and using $h$, and $\Op(e^{\mp sG} \times \id_{\E})$ is possibly modifed by a smoothing (finite rank) operator to make it invertible. This construction is carried out in \cite{Faure-Sjostrand-11} and \cite{Dyatlov-Zworski-16}, who show there exist $C, c > 0$ (depending only on $X$) such that the resolvent $(\pm \X + z)^{-1}: \mc{H}^s_{\pm} \to \mc{H}^s_{\pm}$, defined initially for $\re(z) \gg 1$, extends meromorphically as a map 
\begin{equation}\label{eq:mero-ext}
	\{z \in \mathbb{C} \mid \re(z) > C - cs\} \ni z \mapsto (\pm \X + z)^{-1} \in \mc{L}(\mc{H}^s_{\pm},  \mc{H}^s_{\pm}).
\end{equation}

If the flow $(\varphi_t)_{t \in \mathbb{R}}$ preserves a smooth volume $\dd \mu$, and $(\E, h, \X)$ is a unitary linear extension, then we may take $C = 0$ in the above, and the resolvent is holomorphic as a map
\begin{equation}\label{eq:holo-ext}
	\{z \in \mathbb{C} \mid \re(z) > 0\} \ni z \mapsto (\pm \X + z)^{-1} \in \mc{L}(L^2(\M, \E), L^2(\M, \E)),
\end{equation}
since $e^{t\X}$ acts by unitary isomorphisms on $L^2(\M, \E)$.

Finally, note that when working with Hermitian bundles $(\E, h)$, we may choose the anistropic spaces such that $\mc{H}^s_\pm(\M, \End(\E))$ is invariant under taking adjoints. Indeed, this can be achieved by replacing the order function $m(x, \xi)$ by $\frac{1}{2}(m(x, \xi) + m(x, -\xi))$ and observing it also satisfies the required conditions in \eqref{eq:order-function} (this works because $E_{u/s}$ have a vector bundle structure). The invariance under the adjoint then follows from a straightforward computation in local coordinates.
%%%
%%%
%%%

\section{Pollicott-Ruelle resonances in low regularity}

\label{section:low-regularity}

In this section we prove some properties of the resolvent for Anosov flows and potentials in low regularity. These results will be used in the proof of the Main Theorem later in Section \ref{sec:main-proof}.

\subsection{A multiplication lemma}

Let $X$ generate a smooth Anosov flow on an $n$-manifold $\M$, preserving a smooth measure $\dd\mu$, and let $(\E, h, \X) \rightarrow \M$ be a linear unitary extension of this flow. As explained in \S\ref{ssec:P-R-resonances}, the resolvent map $z \mapsto (\X + z)^{-1}$ is a holomorphic linear map on $L^2(\M, \E)$ in the right half-plane, and meromorphic as acting on \emph{anisotropic Sobolev spaces} $\mc{H}^s_+$ for $\re(z) > -cs$, where $c > 0$ depends only on the flow and $s > 0$ is arbitrary.

We will need a first technical lemma:

\begin{lemma}
\label{lemma:multiplication}
For any $\beta > 2s$, the follow map is continuous:
\[
	C^\beta(\M, \End(\E)) \times \mc{H}_+^s(\M, \E) \ni (f_1, f_2) \mapsto f_1 f_2 \in \mc{H}_+^s(\M, \E).
\]
\end{lemma}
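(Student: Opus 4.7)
The plan is to reformulate the desired bound as an $L^2$-boundedness statement for a pseudodifferential operator with rough coefficients, and then invoke paradifferential calculus. Recall that by the definition of the anisotropic space in \S\ref{ssec:P-R-resonances}, the norm satisfies $\|u\|_{\mc{H}^s_+} \asymp \|\Op(e^{sG}) u\|_{L^2}$, and since $|G(x,\xi)| \leq \log(1+|\xi|)$, the operators $\Op(e^{\pm sG})$ are pseudodifferential of order $\pm s$. Hence the conclusion of the lemma reduces to showing that for $f_1 \in C^\beta(\M, \End(\E))$, the conjugated multiplication operator $\Op(e^{sG})\, M_{f_1}\, \Op(e^{-sG})$ is bounded on $L^2(\M, \E)$ with operator norm controlled by $\|f_1\|_{C^\beta}$, where $M_{f_1}$ denotes the fibrewise action of $f_1$.

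To handle this conjugation with a merely $C^\beta$ coefficient, I would use the Bony paradifferential decomposition $f_1 f_2 = T_{f_1} f_2 + T_{f_2} f_1 + R(f_1, f_2)$ together with the pseudodifferential calculus for symbols of limited regularity, see for instance \cite[Ch.~2 and App.~A]{Taylor-91}. The paraproduct $T_{f_1}$ is a pseudodifferential operator of order $0$ with symbol in a class of type $C^\beta S^0$; conjugating by $\Op(e^{\pm sG})$ and expanding the resulting composition symbolically to order $\lfloor \beta \rfloor$, the remainder is of negative order and hence $L^2$-bounded once $\beta > s$, which handles the principal piece $T_{f_1} f_2$. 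The symmetric paraproduct $T_{f_2} f_1$ and the high-high remainder $R(f_1, f_2)$ are controlled using the gain-of-regularity estimates of Bony's decomposition. Since $\Op(e^{-sG})$ has order $s$ and $\mc{H}^s_+ = \Op(e^{-sG}) L^2$, one has the continuous embedding $\mc{H}^s_+ \hookrightarrow H^{-s}$ and conversely $H^s \hookrightarrow \mc{H}^s_+$. Thus for $f_2 \in \mc{H}^s_+ \subset H^{-s}$, the standard paradifferential bounds yield $R(f_1, f_2) \in H^{\beta - s}$ whenever $\beta > s$, and requiring $R(f_1, f_2) \in H^s \hookrightarrow \mc{H}^s_+$ forces the sharper condition $\beta - s > s$, namely $\beta > 2s$, which is precisely the hypothesis of the lemma; a parallel analysis controls $T_{f_2} f_1$.

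The principal obstacle I expect is tracking the regularity loss at each step of the symbolic expansion of $\Op(e^{sG})\, M_{f_1}\, \Op(e^{-sG})$, and verifying that for $C^\beta$ coefficients with $\beta > 2s$ the expansion can be carried to a remainder that is genuinely $L^2$-bounded; the usual smoothing steps in rough symbolic calculus cost one derivative of the coefficient at each iteration, and one must count exactly how many iterations are needed. Intuitively, the factor of two in $\beta > 2s$ reflects the fact that $\mc{H}^s_+$ is sandwiched between $H^{s}$ and $H^{-s}$, so transporting a distribution of regularity $-s$ back up to regularity $s$ through a low-regularity multiplier demands a gain of $2s$ derivatives from the paradifferential remainder, which in turn demands $\beta > 2s$.
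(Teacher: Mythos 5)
Your strategy is in substance the same as the paper's: reduce the lemma to the $L^2$-boundedness of the conjugated multiplication operator $\Op(e^{-sG})^{-1} M_{f_1} \Op(e^{-sG})$ and treat the rough coefficient by paradifferential calculus. The paper implements this with the global rough calculus of \cite{Guillarmou-de-Poyferre-22}: it splits $f_1 = f_1^\sharp + f_1^\flat$, with $T_{f_1} = \Op(f_1^\sharp)$ the paradifferential part and $\Op(f_1^\flat) : H^{\alpha - \beta} \to H^{\alpha}$ smoothing. Your trilinear Bony decomposition $f_1 f_2 = T_{f_1} f_2 + T_{f_2} f_1 + R(f_1, f_2)$, where the last two terms are estimated using only $f_2 \in \mc{H}^s_+ \hookrightarrow H^{-s}$ and land in $H^{\beta - s} \subset H^{s} \hookrightarrow \mc{H}^s_+$ precisely when $\beta > 2s$, plays exactly the role of the $f_1^\flat$ contribution, so the two arguments coincide up to repackaging.

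Two inaccuracies are worth flagging, neither of which invalidates the proof under the stated hypothesis. First, $\Op(e^{sG})$ and $\Op(e^{-sG})$ are not of orders $+s$ and $-s$: since the order function $m$ takes values in $[-1,1]$, both operators have order $s$ (up to an arbitrarily small loss), e.g.\ $e^{-sG} = (1+|\xi|)^{s}$ microlocally near $E_u^*$. What you actually use, namely $H^s \hookrightarrow \mc{H}^s_+ \hookrightarrow H^{-s}$, is nevertheless correct. Second, your claim that the conjugated paraproduct piece $T_{f_1}$ is already handled when $\beta > s$ is too optimistic for the standard two-step composition argument: conjugating a rough operator of order $0$ with coefficient regularity $C^\beta$ by two operators of order $s$ leaves, by the composition results of \cite[Propositions 2.8 and 2.9]{Guillarmou-de-Poyferre-22} (or the analogous rough-calculus statements in \cite{Taylor-91}), a remainder of order $(2s - \beta)+$, so this piece also consumes the full hypothesis $\beta > 2s$ --- exactly as in the paper, where the remainder is $\mc{O}(\Psi^{(2s-\beta)+}_{1,1})$. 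Thus attributing the threshold $\beta > 2s$ solely to $T_{f_2} f_1$ and $R(f_1, f_2)$ is misleading, and to make the "expand to order $\lfloor \beta \rfloor$" step rigorous you should invoke such rough-composition theorems rather than a formal symbolic expansion; with that correction your proposal goes through.
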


\begin{proof}
	First of all, observe that this is only a local statement. Hence, up to taking a cover by local charts and using local trivialisations, it is sufficient to prove the lemma in $\R^n$ for functions. The proof relies on \emph{paradifferential calculus}, see \cite{Bahouri-Chemin-Danchin-11} or \cite[Chapter 10]{Hormander-97} for further references. We will however use the convenient global approach which was recently set up by Guedes-Bonthonneau--Guillarmou--de Poyferre \cite{Guillarmou-de-Poyferre-22}. The proof is essentially contained in \cite[Proposition 4.1]{Guillarmou-de-Poyferre-22} but we give it here for completeness; we will freely use the notation of \cite{Guillarmou-de-Poyferre-22}.
	
For notational simplicity, in what follows we give the proof in the case of $\E = \M \times \mathbb{C}$; the transition to the general case is given by the verbatim same argument replacing functions by sections (see also the discussion on \cite[p. 697]{Guillarmou-de-Poyferre-22} for the case of vector bundles). By \cite[Lemma 2.14]{Guillarmou-de-Poyferre-22}, there is a splitting 
\begin{equation}\label{eq:splitting}
	f = f^{\sharp} + f^{\flat}, \quad f^{\sharp} \in {}^{\beta} \widetilde{S}^0_{1, 1}(\M), \quad f^{\flat} \in \cap_{\alpha \in (0, \beta)} C^{\beta - \alpha} S^{-\alpha}(\M).
\end{equation}
Then $T_f := \Op(f^{\sharp}) \in {}^\beta \widetilde{\Psi}^0_{1, 1}(\M)$ is called the associated \emph{paradifferential operator}, and we have
\begin{equation}\label{eq:flat-bounded}
	\forall \alpha \in (0, \beta), \quad \Op(f^\flat): H^{\alpha - \beta}(\M) \to H^{\alpha}(\M),
\end{equation}
is bounded as a map on Sobolev spaces. We want to prove that the following map is bounded (where $M_f$ is multiplication by $f$):
\[\Op(e^{-sG})^{-1} M_f \Op(e^{-sG}): L^2(\M) \to L^2(\M).\]
According to the splitting \eqref{eq:splitting}, it suffices to show that that the terms
\[\Op(e^{-sG})^{-1} T_f \Op(e^{-sG}), \quad \Op(e^{-sG})^{-1} \Op(f^{\flat}) \Op(e^{-sG}): L^2(\M) \to L^2(\M)\]
are bounded. For the latter term, we simply observe that $\Op(e^{sG})^{-1} = \Op(e^{-sG}) + \mc{O}(\Psi^{(s - 1) +}(\M))$, as well as that $\Op(e^{\pm sG}) \in \Psi^{s+}(\M)$. Here and throughout the proof, we write $\Psi^{\bullet+}(\M) := \cup_{k > 0} \Psi^{\bullet + k}(\M)$. Therefore, the composition is bounded as:
\[\Op(e^{-sG})^{-1} \Op(f^{\flat}) \Op(e^{-sG}): L^2(\M) \to H^{-2s + \beta}(\M) \subset L^2(\M),\]
where we also used \eqref{eq:flat-bounded} and the assumption that $\beta > 2s$, showing that this contribution is moreover compact.

For the former term, we argue in two steps as follows. Observe first that
\begin{equation}\label{eq:composition-1}
	T_f \Op(e^{-sG}) = \Op(f_1) + \mc{O}(\Psi^{-\infty}) \in {}^\beta \widetilde{\Psi}^{s +}_{1, 1}(\M), \quad f_1 = f^{\sharp} e^{-s G} + \mc{O}(S^{(s - \beta) + }_{1, 1}(\M)),
\end{equation}
where we used \cite[Proposition 2.8 and 2.9]{Guillarmou-de-Poyferre-22} (note that $\Op(e^{-sG}) \in \Psi^{s+}(\M) \subset {}^\beta \widetilde{\Psi}^{s+}_{1, 1}(\M)$). Next, we note that by the parametrix consutrction we may write 
\[
	\Op(e^{-sG})^{-1} = \Op(F e^{sG}) + \mc{O}(\Psi^{-\infty}), \quad F \in S^0(\M).
\]
We next compose on the left to get, again using \cite[Proposition 2.8 and 2.9]{Guillarmou-de-Poyferre-22}:
\begin{equation}\label{eq:composition-2}
	\Op(e^{-sG})^{-1} T_f \Op(e^{-sG}) =  \Op(f_2) + \mc{O}(\Psi^{-\infty}), \quad f_2 = f_1 F e^{sG} + \mc{O}(S^{(2s - \beta)+}_{1, 1}(\M)).
\end{equation}
Using \eqref{eq:composition-1} and \eqref{eq:composition-2}, we conclude that
\[\Op(e^{-sG})^{-1} T_f \Op(e^{-sG}) = \underbrace{\Op(F f^{\sharp})}_{\in {}^\beta\widetilde{\Psi}^{0}_{1,1}(\M)} + \mc{O}(\Psi^{(2s - \beta)+}_{1, 1}(\M)),\]
which concludes the proof since both summands are bounded as maps $L^2(\M) \to L^2(\M)$ by \cite[Proposition 2.6]{Guillarmou-de-Poyferre-22} (in fact the sum on the right hand side is equal to $T_f$ plus compact terms).
\end{proof}

Recall now that $\mc{H}^s_- = \Op(e^{sG} \times \id_{\E}) L^2(\M, \E)$. Note that the $L^2$-pairing $C^\infty(\M, \E) \times C^\infty(\M, \E) \to \mathbb{C}$, $(u, v) \mapsto \langle{u, v}\rangle_{L^2}$, extends continuously to $\mc{H}^s_+ \times \mc{H}^s_- \to \mathbb{C}$ by the fact that $\Op(e^{sG} \times \id_{\E})^* \Op(e^{-sG} \times \id_{\E}) \in \Psi^0(\M, \E)$.

\begin{lemma}
\label{lemma:multiplication2}
Let $\beta > 0$ such that $\beta > 2s$. Then the following multiplication map is continuous:
\[
	\mc{H}^s_+(\M, \End(\E)) \times \mc{H}^s_-(\M, \End(\E)) \ni (f_1, f_2) \mapsto f_1 f_2 \in H^{-(n/2 + \beta)}(\M, \End(\E)).
\]
\end{lemma}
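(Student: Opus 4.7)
The strategy is to dualise and reduce to the previous lemma. Recall that the $L^2$-pairing extends to a continuous sesquilinear pairing $\mc{H}^s_+ \times \mc{H}^s_- \to \mathbb{C}$, so $(\mc{H}^s_+)' \cong \mc{H}^s_-$ in a natural way, and standard duality identifies $H^{-(n/2+\beta)}(\M, \End(\E))$ with the dual of $H^{n/2+\beta}(\M, \End(\E))$ via the $L^2$-pairing. Hence, to control $\|f_1 f_2\|_{H^{-(n/2+\beta)}}$ it suffices to estimate the pairing $\langle f_1 f_2, \varphi \rangle_{L^2}$ against arbitrary $\varphi \in H^{n/2+\beta}(\M, \End(\E))$.

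The first step is to rewrite this pairing using the trace pairing on $\End(\E)$ so that $\langle f_1 f_2, \varphi\rangle_{L^2} = \langle f_1, \widetilde{\varphi}\, f_2\rangle_{L^2}$ (up to an adjoint, irrelevant here) for some $\widetilde{\varphi}$ built from $\varphi$ via cyclicity of the trace, with $\|\widetilde{\varphi}\|_{H^{n/2+\beta}} = \|\varphi\|_{H^{n/2+\beta}}$. By the continuous extension of the $L^2$-pairing to $\mc{H}^s_+ \times \mc{H}^s_-$, this is bounded by $C\|f_1\|_{\mc{H}^s_+}\|\widetilde{\varphi}\, f_2\|_{\mc{H}^s_-}$, so the task reduces to proving
\[\|\widetilde{\varphi}\, f_2\|_{\mc{H}^s_-} \leq C\|\widetilde{\varphi}\|_{H^{n/2+\beta}}\|f_2\|_{\mc{H}^s_-}.\]

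The second step is to invoke Sobolev embedding in Zygmund scale, $H^{n/2+\beta}(\M) \hookrightarrow C^\beta_*(\M)$ for $\beta > 0$, which gives $\|\widetilde{\varphi}\|_{C^\beta_*} \lesssim \|\widetilde{\varphi}\|_{H^{n/2+\beta}}$. We are thus reduced to establishing the multiplication estimate
\[C^\beta_*(\M, \End(\E)) \times \mc{H}^s_-(\M, \End(\E)) \ni (g, f) \mapsto g f \in \mc{H}^s_-(\M, \End(\E)), \qquad \beta > 2s,\]
which is the exact analogue of Lemma \ref{lemma:multiplication} with the minus anisotropic space in place of the plus one. The proof of Lemma \ref{lemma:multiplication} transfers verbatim: one splits $g = g^\sharp + g^\flat$ via paradifferential calculus as in \cite[Lemma 2.14]{Guillarmou-de-Poyferre-22}, conjugates $M_g$ by $\Op(e^{sG} \times \id_\E)$ instead of $\Op(e^{-sG} \times \id_\E)$, and observes that the same symbol-class arithmetic applies because it depends only on the orders of $\Op(e^{\pm sG})$ (both lie in $\Psi^{s+}(\M)$) and on the paradifferential bounds, none of which are sensitive to the sign of $G$.

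The only subtle point is the bookkeeping of the trace-duality rewriting in the first step, to make sure one really gets multiplication of $f_2$ by a $C^\beta_*$ endomorphism on the correct side; this is purely algebraic and does not create a genuine obstacle. Combining the three displays yields
\[|\langle f_1 f_2, \varphi\rangle_{L^2}| \leq C \|f_1\|_{\mc{H}^s_+} \|f_2\|_{\mc{H}^s_-} \|\varphi\|_{H^{n/2+\beta}},\]
which is the claim after taking the supremum over $\varphi$ in the unit ball of $H^{n/2+\beta}$.
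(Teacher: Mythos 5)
Your proposal is correct and follows essentially the same route as the paper: duality against test sections $\phi \in H^{n/2+\beta}$, Sobolev embedding into $C^\beta$, the multiplication Lemma \ref{lemma:multiplication} transferred to $\mc{H}^s_-$, and the extended $L^2$-pairing $\mc{H}^s_+ \times \mc{H}^s_- \to \mathbb{C}$. The only difference is cosmetic: you spell out explicitly (via the trace pairing and the sign-insensitivity of the paradifferential argument) two points the paper leaves implicit, namely the rewriting $\langle f_1 f_2, \phi\rangle_{L^2} = \langle f_1, \overline{f_2}\phi\rangle_{L^2}$ and the validity of the $C^\beta \times \mc{H}^s_- \to \mc{H}^s_-$ estimate.
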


\begin{proof}
Let $N := n/2 + \beta$ and $\phi \in H^{N}(\M, \End(\E)) \subset C^\beta(\M, \End(\E))$ (the last inclusion follows by the Sobolev embedding theorem). Hence by the multiplication Lemma \ref{lemma:multiplication} (more precisely, its version for multiplication of $\End(\E)$ sections) we get:
\[ 
	\|f_2 \phi\|_{\mc{H}_-^s} \lesssim \|f_2\|_{\mc{H}_-^s} \|\phi\|_{C^{\beta}} \lesssim \|f_2\|_{\mc{H}_-} \|\phi\|_{H^N},
\]
and moreover using this bound, we have that:
\[
	\langle f_1 f_2, \phi \rangle_{L^2} = \langle f_1, \overline{f_2} \phi \rangle_{L^2} \lesssim \|f_1\|_{\mc{H}_+} \|\overline{f_2} \phi\|_{\mc{H}_-} \lesssim \|f_1\|_{\mc{H}_+}  \|f_2\|_{\mc{H}_-} \|\phi\|_{H^N}.
\]
By duality, this proves the announced result.
\end{proof}

\subsection{Meromorphic extension in low regularity}

\label{appendix:b}

Fix $s > 0$ and let $\beta > 2s$ (so that the multiplication Lemma \ref{lemma:multiplication} applies) and $V \in C^\beta(\M,\End(\E))$. We want to look at the meromorphic extension of $z \mapsto(\X + V + z)^{-1}$ in the rectangle
\[\Omega_R := \left\{z \in \mathbb{C} \mid \re(z) > -cs/2,\,\, |\im(z)|<R \right\}, \quad R > 0.\]

\begin{lemma}\label{lemma:mero-perturbation}
Fix $R > 0$. There exists $\eps = \eps(R) > 0$ such that for all $V \in C^\beta(\M, \End(\E))$ such that $\|V\|_{C^\beta} < \eps$, the following map is meromorphic:
\[
	\Omega_R \ni z \mapsto (\X + V + z)^{-1} \in \mc{L}(\mc{H}^s_+,\mc{H}^s_+).
\]
\end{lemma}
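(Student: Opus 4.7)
The plan is to apply the second resolvent identity
\[
	(\X + V + z)^{-1} = (I + R_0(z) V)^{-1} R_0(z), \qquad R_0(z) := (\X + z)^{-1},
\]
and reduce the problem to showing that $(I + R_0(z) V)^{-1}$ extends meromorphically to $\Omega_R$ as a bounded operator on $\mc{H}^s_+$. Since $R_0$ is already known to be meromorphic on $\Omega_R$ by the Faure--Sj\"ostrand / Dyatlov--Zworski theory recalled in \S \ref{ssec:P-R-resonances} (and indeed holomorphic on $\{\re(z)>0\}$ thanks to the volume-preserving unitary setting), composing will then yield the claimed extension of $(\X + V + z)^{-1}$.

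The first step is to decompose $R_0 = P + H$, where $P(z)$ is the sum of the (finite-rank) principal parts at the poles of $R_0$ inside $\Omega_R$ and $H(z)$ is holomorphic on $\Omega_R$. Because $R_0$ is holomorphic for $\re(z) > 0$, all the poles lie in the bounded slab $\{-cs/2 < \re(z) \leq 0,\ |\im(z)| \leq R\}$, hence they are finite in number. I then want a uniform bound on $\|H(z)\|_{\mc{H}^s_+ \to \mc{H}^s_+}$ over the (unbounded) set $\Omega_R$: this follows from holomorphy on compact subsets together with the fact that both $R_0(z)$ (by the standard contraction-semigroup resolvent estimate) and $P(z)$ (explicitly of the form $\sum (z-z_*)^{-k} A_{*,k}$) decay to $0$ as $\re(z) \to \infty$. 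By Lemma \ref{lemma:multiplication}, multiplication by $V$ is bounded on $\mc{H}^s_+$ with norm $\lesssim \|V\|_{C^\beta}$, so choosing $\eps = \eps(R)$ sufficiently small forces $\sup_{z \in \Omega_R} \|H(z) V\|_{\mc{H}^s_+ \to \mc{H}^s_+} < 1/2$, and a Neumann series produces a holomorphic inverse $(I + H(z)V)^{-1}$ on $\Omega_R$.

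With this in hand, I factor
\[
	I + R_0(z) V = \bigl(I + H(z) V\bigr)\bigl(I + K(z)\bigr), \qquad K(z) := (I + H(z)V)^{-1} P(z) V,
\]
where $K(z)$ is a meromorphic family of \emph{finite-rank} operators on $\Omega_R$. For $\re(z) \gg 1$ the operator $R_0(z) V$ has norm $<1$, so both $I + R_0(z) V$ and $I + K(z)$ are invertible there; the finite-rank analytic Fredholm theorem, which for finite-rank $K$ reduces via a Grushin / Schur-complement computation to inverting a finite-dimensional matrix of meromorphic scalar functions, then yields meromorphy of $(I + K(z))^{-1}$ on $\Omega_R$. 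Composing, $(I + R_0(z) V)^{-1} = (I + K(z))^{-1}(I + H(z)V)^{-1}$ is meromorphic on $\Omega_R$, and so is $(\X + V + z)^{-1}$. The main technical point I expect to be slightly delicate is the uniform bound on $\|H(z)\|_{\mc{H}^s_+ \to \mc{H}^s_+}$ over the unbounded set $\Omega_R$ (requiring matched decay of $R_0$ and of each principal-part operator as $\re(z)\to\infty$); once that is established, the remainder of the argument is mechanical, and the dependence $\eps=\eps(R)$ arises precisely through this uniform bound together with the constant in Lemma \ref{lemma:multiplication}.
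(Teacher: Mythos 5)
Your argument is correct in substance, but it takes a genuinely different route from the paper. The paper does not decompose the free resolvent at all: it first obtains invertibility of $\X+V+z$ on $\mc{H}^s_+$ for $\re(z)\gg 1$ from propagator bounds (the locally bounded family $e^{-t\X}$ on $\mc{H}^s_+$ together with Lemma \ref{lemma:multiplication}), then uses that $\X+z:\mc{D}^s_+\to\mc{H}^s_+$ is Fredholm on $\{\re(z)>-cs\}$ by \eqref{eq:mero-ext}, so that for $\|V\|_{C^\beta}$ small the bounded perturbation by $V$ (again Lemma \ref{lemma:multiplication}) keeps $\X+V+z$ Fredholm on $\Omega_R$ by openness of the Fredholm set, and concludes by the analytic Fredholm theorem \cite[Theorem D.4]{Zworski-12}. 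Your scheme instead inverts $I+R_0(z)V$ directly — note your starting factorization is exactly \eqref{eq:identity}, which the paper itself uses only in the next step, Lemma \ref{lemma:perturbed-resolvent} — by splitting $R_0=P+H$ into finite-rank principal parts plus a holomorphic remainder, then combining a Neumann series for $I+H(z)V$ with a finite-rank meromorphic Fredholm argument. What your route buys is a more explicit localisation of the perturbed poles (old resonances and zeros of a finite determinant); what the paper's route buys is brevity, since the Fredholm property of $\X+z$ is already packaged in \eqref{eq:mero-ext} and no uniform control of a holomorphic part over the unbounded half-strip is needed.

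Three points in your write-up need (easy) repairs. First, the uniform bound on $\|H(z)\|$ over $\Omega_R$ fails as stated if a resonance of $\X$ sits exactly on $\partial\Omega_R$; since $R_0$ is meromorphic on the larger region $\{\re(z)>-cs\}$, with finitely many poles in any compact subset and none for $\re(z)$ large, you should build $P$ from all poles in a slightly enlarged compact rectangle, so that $H$ is holomorphic on a neighbourhood of the bounded part of $\overline{\Omega_R}$. Second, on $\mc{H}^s_+$ the propagator is not a contraction but only $\mc{O}(e^{C_0 t})$; this still gives $\|R_0(z)\|_{\mc{H}^s_+\to\mc{H}^s_+}=\mc{O}((\re(z)-C_0)^{-1})\to 0$, so your decay claim survives — and in fact you only need absence of poles for $\re(z)>C_0$, not the sharper statement for $\re(z)>0$, whose transfer from the $L^2$ statement \eqref{eq:holo-ext} to $\mc{H}^s_+$ itself requires the standard independence-of-the-space argument. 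Third, with your ordering $K(z)=(I+H(z)V)^{-1}P(z)V$ the range of $K(z)$ varies with $z$, so the advertised reduction to a matrix of scalar meromorphic functions does not apply verbatim; either use the identity $(I+AT)^{-1}=I-A(I+TA)^{-1}T$ with $A=(I+HV)^{-1}$ and $T=PV$, so that the finite-rank factor $TA$ has range in a fixed finite-dimensional space, or invoke the Fredholm theorem for finitely meromorphic families directly. With these adjustments your proof goes through and produces the same dependence $\eps=\eps(R)$ as the paper's argument.
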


\begin{proof}
Using the condition \eqref{eq:order-function} and the Anosov condition \eqref{eq:anosov}, it is possible to show that the propagators $(e^{-t\X})_{t \geq 0}: \mc{H}_+^s \to \mc{H}_+^s$ are a locally bounded family of operators (see for instance \cite[Sections 2 and 3]{Faure-Sjostrand-11} for the related case of flow pullback $(\varphi_{-t}^*)_{t \geq 0}$). Therefore, there exists $C_0 > 0$ such that $\|e^{-t \X}\|_{\mc{H}^s_+ \to \mc{H}_+^s} \leq C_0 e^{C_0t}$ for $t > 0$. By Lemma \ref{lemma:multiplication}, the propagator $e^{\int_0^t e^{-p\X} V\, dp}$ is also bounded as a map $\mc{H}^{s}_+ \to \mc{H}^s_+$ by $C_V e^{C_V t}$ for some other $C_V > 0$. We conclude that for $C := C_0 + C_V$ we have
\[
	(\X + V + z)^{-1} := \int_0^\infty e^{-\int_0^t e^{-p \X} V\, dp} e^{-zt} e^{-t\X}\, dt :\mc{H}^s_+ \to \mc{H}^s_+,
\]
is a holomorphic family of operators for $\re(z) > C + \varepsilon$. Using the fact that $\X + z: \mc{D}^s_+ \to \mc{H}^s_+$ is Fredholm for $\re(z) > -cs$ (which follows from \eqref{eq:mero-ext}, and where $\mc{D}^s_+ = \{u \in \mc{H}^s_+ \mid \X u \in \mc{H}^s_+\}$), and openness of the set of Fredholm operators in $\mc{L}(\mc{D}^s_+, \mc{H}^s_+)$, as well as compactness of $\Omega_R$, we conclude that for $\varepsilon > 0$ small enough and $z \in \Omega_R$ the family of maps $\X + V + z: \mc{D}^s_+ \to \mc{H}^s_+$ is Fredholm. The main claim then follows by the analytic Fredholm theorem (see \cite[Theorem D.4]{Zworski-12}).
\end{proof}

Let $\gamma$ be a small oriented contour around $0$ such that it encloses only the (potential) resonance $0$ of $\X$. By Lemma \ref{lemma:mero-perturbation}, if $\|V\|_{C^\beta}$ is sufficiently small we can speak of the inverse of the perturbed operator $\X + V + z$ on $\gamma$.

\begin{lemma}\label{lemma:perturbed-resolvent}
There exist $C, \eps > 0$ such that for all $\|V\|_{C^\beta} < \eps$, for all $z \in \gamma$ we have:
\begin{equation*}
	\|(\X + V + z)^{-1}\|_{\mc{H}^s_+\rightarrow \mc{H}^s_+} \leq C.
\end{equation*}
\end{lemma}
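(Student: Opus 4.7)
The plan is to write $(\mathbb{X} + V + z)^{-1}$ as a Neumann-series perturbation of $(\mathbb{X} + z)^{-1}$ on the contour $\gamma$, which works because $\gamma$ avoids the resonances of $\mathbb{X}$ and is compact.

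First, I would observe that by the meromorphy statement \eqref{eq:mero-ext} applied on $\mc{H}^s_+$, the map $z \mapsto (\mathbb{X} + z)^{-1} \in \mc{L}(\mc{H}^s_+, \mc{H}^s_+)$ is meromorphic in a neighbourhood of $\overline{\Omega_R}$ for $R$ fixed, with poles only at the (finitely many) Pollicott--Ruelle resonances. By the choice of $\gamma$, the only resonance inside $\gamma$ is $0$, and $\gamma$ itself contains no resonance. Since $\gamma$ is compact and the resolvent is holomorphic in a neighbourhood of $\gamma$, we obtain
\[
M := \sup_{z \in \gamma} \|(\mathbb{X} + z)^{-1}\|_{\mc{H}^s_+ \to \mc{H}^s_+} < \infty.
\]

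Next, I would invoke the multiplication Lemma \ref{lemma:multiplication} (whose hypothesis $\beta > 2s$ is in force) to produce a constant $C_0 > 0$ such that the operator of multiplication by $V$ satisfies
\[
\|V u\|_{\mc{H}^s_+} \leq C_0 \|V\|_{C^\beta} \|u\|_{\mc{H}^s_+}, \qquad u \in \mc{H}^s_+.
\]
Consequently, for $z \in \gamma$,
\[
\|(\mathbb{X} + z)^{-1} V\|_{\mc{H}^s_+ \to \mc{H}^s_+} \leq C_0 M \|V\|_{C^\beta}.
\]
Pick $\eps > 0$ so small that $C_0 M \eps < 1/2$. Then for any $V$ with $\|V\|_{C^\beta} < \eps$ and any $z \in \gamma$, the operator $I + (\mathbb{X} + z)^{-1} V$ is invertible on $\mc{H}^s_+$ by Neumann series, with inverse bounded in norm by $2$.

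Finally, I would factor
\[
\mathbb{X} + V + z = (\mathbb{X} + z)\bigl(I + (\mathbb{X} + z)^{-1} V\bigr)
\]
on $\mc{D}^s_+$ (this uses only that $V$ is a bounded multiplier on $\mc{H}^s_+$), and conclude that $\mathbb{X} + V + z: \mc{D}^s_+ \to \mc{H}^s_+$ is invertible with
\[
(\mathbb{X} + V + z)^{-1} = \bigl(I + (\mathbb{X} + z)^{-1} V\bigr)^{-1} (\mathbb{X} + z)^{-1},
\]
whose $\mc{H}^s_+ \to \mc{H}^s_+$ norm is bounded by $2 M$, uniformly in $z \in \gamma$. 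Taking $C := 2M$ completes the argument.

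The only genuinely non-trivial point is the uniform bound on $\|(\mathbb{X} + z)^{-1}\|$ over $\gamma$, which in turn rests on the compactness of $\gamma$, the meromorphic extension \eqref{eq:mero-ext}, and the separation of the resonance $0$ from $\gamma$; everything else is a standard Neumann-series perturbation that becomes available once the multiplication bound from Lemma \ref{lemma:multiplication} is in hand.
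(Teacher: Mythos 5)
Your argument is correct and is essentially the paper's own proof: the same factorization $\X + V + z = (\X + z)\left(\id + (\X + z)^{-1}V\right)$ on $\mc{D}^s_+$, the uniform bound on $(\X+z)^{-1}$ over the compact contour $\gamma$ (which avoids the resonances), and a Neumann series inversion made available by the multiplication Lemma \ref{lemma:multiplication}. The only minor difference is that the paper additionally records the invertibility of $\id + (\X + z)^{-1}V$ on $\mc{D}^s_+$ equipped with the graph norm, a point your explicit formula for the inverse handles implicitly.
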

\begin{proof}
By the very definition of $\gamma$, $(\X + z)^{-1}$ is uniformly bounded as a map $\mc{H}^s_+ \to \mc{H}^s_+$ for $z \in \gamma$. We then write:
\begin{equation}\label{eq:identity}
	\X + V + z = (\X + z)(\id + (\X + z)^{-1}V),
\end{equation}
as maps $\mc{D}^s_+ \to \mc{H}^s_+$, where $\mc{D}^s_+ = \{u \in \mc{H}^s_+ \mid \X u \in \mc{H}^s_+\}$. Observe that by the multiplication Lemma \ref{lemma:multiplication}, we obtain:
\[
	\|(\X + z)^{-1}V\|_{\mc{H}^s_+ \rightarrow \mc{H}^s_+} \leq C \|V\|_{C^\beta} < 1/2,
\]
for $\eps > 0$ small enough. Similarly, we may assume that $\|(\X + z)^{-1}V\|_{\mc{D}^s_+ \rightarrow \mc{D}^s_+} < 1/2$ for $\varepsilon > 0$ sufficiently small (here $\mc{D}^s_+$ is equipped with the graph norm $\|u\|^2_{\mc{D}^s_+} = \|u\|^2_{\mc{H}^s_+} + \|\X u\|^2_{\mc{H}^s_+}$) and hence $\id + (\X + z)^{-1}V$ is invertible on both $\mc{H}^s_+$ and $\mc{D}^s_+$, uniformly in $z \in \gamma$. This concludes the proof by inverting \eqref{eq:identity}.
\end{proof}

By the previous lemma, for each $V$ with $\|V\|_{C^\beta} < \varepsilon$ sufficiently small we can define the following finite rank spectral projector:
\[
	\Pi_V := \dfrac{1}{2 \pi i} \int_{\gamma} (z+\X+V)^{-1}\, \dd z,
\]
onto the resonant spaces contained inside $\gamma$.

\begin{lemma}
\label{lemma:projector-v}
The following map is smooth:
\[
	C^\beta(\M,\End(\E)) \cap \left\{\|V\|_{C^\beta} < \eps\right\} \ni V \mapsto \Pi_V \in \mc{L}(\mc{H}^s_+,\mc{H}^s_+).
\]
Moreover, if we consider the sum of all resonances of $\X + V$ enclosed by $\gamma$, that is,
\[
	\lambda_V := \Tr((\X + V)\Pi_V),
\]
then the following map is smooth:
\[
	C^\beta(\M,\End(\E)) \cap \left\{\|V\|_{C^\beta} < \eps\right\} \ni V \mapsto \lambda_V \in \C.
\]
\end{lemma}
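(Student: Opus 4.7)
The plan is to Neumann-expand the resolvent, use Lemma~\ref{lemma:multiplication} to absorb the low-regularity factor $V$, and integrate over the compact contour $\gamma$. Factoring $\X + V + z = (\X + z)(\id + (\X + z)^{-1}V)$ and inverting gives, for $\|V\|_{C^\beta}$ small and $z \in \gamma$,
\[
(\X + V + z)^{-1} = \sum_{k \geq 0} (-1)^k \bigl[(\X + z)^{-1}V\bigr]^k (\X + z)^{-1} \;\in\; \mc{L}(\mc{H}^s_+, \mc{H}^s_+).
\]
By Lemma~\ref{lemma:multiplication}, multiplication by $V$ is bounded on $\mc{H}^s_+$ with norm $\lesssim \|V\|_{C^\beta}$, and $(z + \X)^{-1}$ is uniformly bounded for $z \in \gamma$, so the series converges absolutely in operator norm, uniformly in $z \in \gamma$. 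Each summand is a continuous homogeneous polynomial of degree $k$ in $V$, and the same estimate controls all of its derivatives (which are polynomials of lower degree); hence $V \mapsto (\X + V + z)^{-1}$ is smooth as a map into $\mc{L}(\mc{H}^s_+, \mc{H}^s_+)$, uniformly in $z$. Integrating over $\gamma$ yields smoothness of $V \mapsto \Pi_V$.

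For the smoothness of $\lambda_V$, the subtlety is that $\ran(\Pi_V)$ moves with $V$, so I transfer the trace to a fixed finite-dimensional model. Let $r := \rk(\Pi_0)$, fix a basis $e_1, \dots, e_r$ of $\ran(\Pi_0)$, and choose functionals $\ell_1, \dots, \ell_r \in (\mc{H}^s_+)^*$ with $\ell_i(e_j) = \delta_{ij}$ (extending from $\ran(\Pi_0)$ by Hahn--Banach). Define the $r \times r$ matrices
\[
B(V)_{ij} := \ell_i(\Pi_V e_j), \qquad C(V)_{ij} := \ell_i\bigl((\X + V)\Pi_V e_j\bigr),
\]
both of which are smooth in $V$ by the first paragraph. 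Since $B(0) = I_r$, the matrix $B(V)$ is invertible for $V$ small, and $\{f_j(V) := \Pi_V e_j\}_{j=1}^r$ is then a basis of $\ran(\Pi_V)$. Since $(\X + V)\Pi_V$ preserves $\ran(\Pi_V)$, we can write $(\X + V)f_j(V) = \sum_k A(V)_{kj} f_k(V)$ and apply $\ell_i$ to obtain $C(V) = B(V)A(V)$. Therefore
\[
\lambda_V = \Tr\bigl((\X + V)\Pi_V\bigr) = \Tr A(V) = \Tr\bigl(B(V)^{-1} C(V)\bigr),
\]
which is smooth in $V$.

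The main obstacle is the first step, namely making sense of multiplication by the rough potential $V \in C^\beta$ on the anisotropic space $\mc{H}^s_+$; this is exactly what Lemma~\ref{lemma:multiplication} supplies, with the condition $\beta > 2s$ being the one imposing regularity. Once this is in hand, both smoothness statements reduce to a Neumann expansion of the resolvent plus a standard change-of-basis argument on a fixed finite-dimensional subspace, carefully chosen to trivialise the variation of $\ran(\Pi_V)$.
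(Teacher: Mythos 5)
Your proposal is correct and follows essentially the same route as the paper: smoothness of $V \mapsto (\X+V+z)^{-1}$ uniformly on the contour via the multiplication lemma and the uniform resolvent bound (the paper computes the derivatives with the resolvent identity, you sum the Neumann series — the same estimates), and the trace is handled exactly as in the paper by expressing $(\X+V)\Pi_V$ in the moving basis $\Pi_V e_j$ built from a basis of $\ran\Pi_0$. The only step you leave implicit is that $\rk \Pi_V = \rk \Pi_0$ for small $V$ (needed so that $\{\Pi_V e_j\}$ spans all of $\ran \Pi_V$ rather than a proper subspace); this local constancy of the rank is cited explicitly in the paper and follows from norm-continuity of $V \mapsto \Pi_V$, since projections at operator-norm distance less than $1$ have equal rank.
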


\begin{proof}
Let us prove that the first map is $C^1$. Fix $V \in C^\beta(\M,\End(\E))$ such that $\|V\|_{C^\beta} < \eps$ and consider $V + sW$, where $W \in C^\beta(\M,\End(\E))$ and $s \in \R$ is small enough. Then:
\[
	\partial_s \Pi_{V+sW}|_{s=0} = -\dfrac{1}{2 \pi i} \int_{\gamma} (z+\X+V)^{-1} W (z+\X+V)^{-1} \dd z,
\]
and combining Lemma \ref{lemma:perturbed-resolvent} together with the multiplication Lemma \ref{lemma:multiplication}, we obtain:
\[
	\|(z+\X+V)^{-1} W (z+\X+V)^{-1}\|_{\mc{H}^s_+ \rightarrow \mc{H}^s_+} \leq C \|W\|_{C^\beta},
\]
for some uniform constant $C > 0$, i.e. independent of $z \in \gamma$ and $V$ with $\|V\|_{C^\beta} < \eps$. Hence $\|\partial_s \Pi_{V+sW}|_{s=0}\|_{\mc{H}^s_+ \rightarrow \mc{H}^s_+} \leq C \|W\|_{C^\beta}$, proving the $C^1$ claim. Arguing similarly, we obtain that for each $k \in \mathbb{Z}_{\geq 2}$, there exists another uniform $C > 0$ such that $\|\partial_s^k \Pi_{V+sW}|_{s=0}\|_{\mc{H}^s_+ \rightarrow \mc{H}^s_+} \leq C \|W\|_{C^\beta}^k$, which shows that the map is in fact smooth.

The smoothness of the trace map then follows: let $(\e_i)_{i = 1}^k \subset \mc{H}^s_+$ be a basis of resonant states at $V = 0$ (i.e. a basis $\ran(\Pi_{V = 0})$). Note that $(\Pi_V\e_i)_{i = 1}^k$ form a basis of $\ran \Pi_V$ by the fact that the rank of $\Pi_V$ is locally constant (see \cite[Lemma 6.2]{Cekic-Paternain-20}). Since $\ran((\X + V) \Pi_V) \subset \ran \Pi_V$ it is possible to express the required trace using the basis $(\Pi_V\e_i)_{i = 1}^k$ which is smooth by the first part, which completes the proof.
\end{proof}
%%%
%%%
%%%

\section{Stability estimate in representation theory}

\label{section:representation-theory}

Recall that a monoid $\mathbf{G}$ is a set endowed with a neutral element $\mathbf{1}_{\mathbf{G}}$ together with an associative product but, unlike groups, it does not have inverses. If $\rho : \mathbf{G} \to \mathrm{Aut}(\E)$ is a representation of $\mathbf{G}$ in a finite-dimensional vector space $\E$, its \emph{character} is defined as $\chi_\rho(\bullet) := \Tr(\rho(\bullet))$. The well-known character theorem asserts that two (finite-dimensional) unitary representations of a monoid with same character are isomorphic:

\begin{theorem}
Let $\rho_0 : \G \to \mathrm{U}(\E_0)$ and $\rho : \G \to \mathrm{U}(\E)$ be two finite-dimensional unitary representations of the monoid $\mathbf{G}$ such that $\chi_\rho = \chi_{\rho_0}$. Then there exists $\mathbf{P} \in \mathrm{U}(\E, \E_0)$ such that:
\[
\rho_0(g) = \mathbf{P} \rho(g) \mathbf{P}^*, \qquad \forall g \in \mathbf{G},
\]
that is, the representations are unitarily equivalent.
\end{theorem}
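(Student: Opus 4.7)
The plan is to reduce the statement to the classical character theorem for compact groups. Form the direct sum representation $\rho \oplus \rho_0 : \mathbf{G} \to \mathrm{U}(\E \oplus \E_0)$ and let $H$ denote the closure of its image inside the compact Lie group $\mathrm{U}(\E \oplus \E_0)$. Clearly $H$ is a closed sub-semigroup containing the identity. The first key step is to argue that $H$ is in fact a compact subgroup: given $x \in H$, by compactness extract a strictly increasing subsequence with $x^{n_k} \to y$, whence $x^{n_{k+1} - n_k} = (x^{n_k})^{-1} x^{n_{k+1}} \to \mathbf{1}$ inside the ambient group. Setting $m_k := n_{k+1} - n_k - 1 \geq 0$, we obtain $x^{m_k} \in H$ (using that $x^0 = \mathbf{1} \in H$) and $x^{m_k} \to x^{-1}$, so that $x^{-1} \in H$ by closedness.

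Once $H$ is known to be a compact group, let $\pi : H \to \mathrm{U}(\E)$ and $\pi_0 : H \to \mathrm{U}(\E_0)$ denote the two coordinate projections, which are continuous finite-dimensional unitary representations of $H$. For $g \in \mathbf{G}$, taking $h = (\rho(g), \rho_0(g)) \in H$ gives
\[
\chi_\pi(h) = \chi_\rho(g) = \chi_{\rho_0}(g) = \chi_{\pi_0}(h).
\]
Continuity of characters together with density of $(\rho \oplus \rho_0)(\mathbf{G})$ in $H$ then yield $\chi_\pi = \chi_{\pi_0}$ throughout $H$.

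Invoking the classical character theorem for compact groups (via Peter-Weyl theory and Haar-integrated orthogonality relations) produces a unitary intertwiner $\mathbf{P} \in \mathrm{U}(\E, \E_0)$ satisfying $\mathbf{P} \pi(h) = \pi_0(h) \mathbf{P}$ for every $h \in H$. Specialising this identity to $h = (\rho(g), \rho_0(g))$ recovers $\mathbf{P} \rho(g) \mathbf{P}^* = \rho_0(g)$ for all $g \in \mathbf{G}$, which is the desired conclusion.

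The main obstacle, and the only point at which the absence of inverses in $\mathbf{G}$ must be finessed, is the verification that the closed sub-semigroup $H$ is a genuine compact group; this argument exploits the topology of the ambient compact Lie group $\mathrm{U}(\E \oplus \E_0)$ in an essential way. With this in hand, the remaining steps are a direct transfer of the classical group-theoretic character theorem, where the density of $(\rho \oplus \rho_0)(\mathbf{G})$ inside $H$ ensures that an intertwiner for the extended representation automatically intertwines the original monoid representations.
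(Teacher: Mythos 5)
Your argument is correct, but it takes a genuinely different route from the paper, which does not reprove this classical statement at all: it simply invokes the algebraic character theorem (cited from Lang), i.e.\ the semisimplicity of finite-dimensional unitary representations of the monoid algebra $\C[\mathbf{G}]$ together with the fact that characters determine semisimple modules, and it is precisely this algebraic picture (Burnside's theorem for $\rho(\C[\mathbf{G}])$, near-isomorphisms of the image algebras) that Section~\ref{section:representation-theory} then quantifies to get Lemma~\ref{lemma:stability-representation}. You instead pass to the topological closure $H$ of $(\rho\oplus\rho_0)(\mathbf{G})$ inside $\mathrm{U}(\E\oplus\E_0)$ (note that $H$ sits inside the closed subgroup of block-diagonal unitaries, so the two coordinate projections are indeed well-defined continuous representations), prove by the standard recurrence-of-powers argument that a closed sub-semigroup of a compact group containing the identity is a group, and then apply Peter--Weyl character theory to the compact group $H$; the steps you give (including the unitarisation of the intertwiner and the density argument for equality of characters on $H$) are all sound. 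What your approach buys is a clean reduction of the monoid statement to the familiar compact-group case, with the lack of inverses handled once and for all by the closure trick; what the paper's algebraic route buys is effectivity: the monoid-algebra/Burnside formulation survives perturbation and yields the quantitative stability estimate with explicit constants depending only on $\rho_0$, whereas the compactness and closure arguments in your proof are intrinsically non-quantitative and would not adapt to the approximate setting that is the actual point of Section~\ref{section:representation-theory}.
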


We refer to \cite[Corollary 3.8]{Lang-02} for instance. In this section, we provide a stability estimate version of the previous theorem.

\subsection{Representations with nearly-equal characters}

From now on, $\mathbf{G}$ is a free monoid. It will be convenient to work with the \emph{monoid algebra} $\C[\mathbf{G}]$ generated $\mathbf{G}$ over $\C$. We will write $\E$ for a complex vector space equipped with an inner product. If $\rho : \C[\mathbf{G}] \rightarrow \mathrm{U}(\E)$ is a finite-dimensional unitary representation, then $\E$ splits as the orthogonal sum (up to unitary isomorphism):
\[
\E = \oplus_{i=1}^K V_i^{n_i},
\]
where $n_i \in \mathbb{Z}_{> 0}$. Each $V_i$ is $\mathbf{G}$-invariant and irreducible, where for $i \neq j$, the induced representations $\rho|_{V_i} : \C[\mathbf{G}] \rightarrow \End(V_i)$ and $\rho|_{V_j}$ are \emph{not} isomorphic and $V_i^{n_i} = V_i \times \dotsb \times V_i$ denotes $n_i$ copies of the same representation.

Define $\Delta_{n_i} : \End(V_i) \rightarrow \End(V_i^{n_i})$ by $\Delta_{n_i}(u) := u \oplus u \oplus \dotsb \oplus u$, where the sum is performed $n_i$-times. Burnside's theorem (see \cite[Corollary 3.4]{Lang-02}) characterizes the image $\rho(\C[\mathbf{G}]) \subset \End(\E)$ as:
\begin{equation}
\label{equation:burnside}
\rho(\C[\mathbf{G}]) = \oplus_{i=1}^K \Delta_{n_i} \End(V_i),
\end{equation}
In particular, it follows from \eqref{equation:burnside} that $\rho(\C[\mathbf{G}])$ is stable by the adjoint operator ${}^*$. We let $\chi_\rho(\bullet) := \Tr(\rho(\bullet))$ be the character of the representation $\rho$.

Fix an arbitrary unitary representation $\rho_{0} : \mathbf{G} \rightarrow \mathrm{U}(\E_0)$ and $g_1,\dotso,g_{N_0} \in \G$ such that 
\[\spann\big(\rho_0(g_1), \dotso, \rho_0(g_{N_0})\big) = \rho_0(\C[\G]),\] 
where $N_0 := \dim \rho_0(\C[\G])$. We will say that a subset $\G_0 \subset \G$ is \emph{sufficiently large} if 
\begin{equation}\label{eq:sufficiently-large}
	g_i, g_i g_j, g_i g_j g_k \in \mathbf{G}_0, \quad \forall i, j, k \in \{1, \dotso, N_0\}.
\end{equation}
Note that this depends both on the representation $\rho_0$ and the chosen basis of $\rho_0(\C[\G])$. It will be convenient to introduce 
\begin{equation}\label{eq:prime-notation}
	\mathbf{G}'_0 := \left\{ g \in \mathbf{G}_0 \mid g_i g \in \mathbf{G}_0, \, \forall i = 1, \dotso, N\right\}.
\end{equation}
Note that by the assumption \eqref{eq:sufficiently-large}, we have that $g_i, g_{i} g_j \in \mathbf{G}_0'$ for all $i, j \in \{1, \dotso, N_0\}$. 

We will prove that the following estimate holds:

\begin{lemma}
\label{lemma:stability-representation}
Let $\rho_0 : \G \to \mathrm{U}(\E_0)$ be irreducible, and let $\rho : \G \to \mathrm{U}(\E)$ be another finite-dimensional unitary representation of the monoid $\mathbf{G}$ such that $\dim \E = \dim \E_0$. There exist $\eps_0, C > 0$, depending only on $\rho_0$, such that that the following holds: if $\mathbf{G}_0 \subset \G$ is sufficiently large (see \eqref{eq:sufficiently-large}) and there exists $\eps \in [0, \eps_0)$ such that
\begin{equation}
\label{equation:trace}
	\|\chi_{\rho_0}-\chi_{\rho}\|_{L^\infty(\mathbf{G}_0)} \leq \eps,
\end{equation}
then there exists $\mathbf{P} \in \mathrm{U}(\E, \E_0)$ such that for all $g \in \mathbf{G}_{0}'$:
\[
	\|\rho_0(g)-\mathbf{P} \rho(g) \mathbf{P}^* \| \leq C \eps.
\]
\end{lemma}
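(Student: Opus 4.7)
Set $E_i := \rho_0(g_i)$ and $A_i := \rho(g_i)$ for $i = 1, \dotsc, N_0$. Since $\rho_0$ is irreducible, Burnside's density theorem gives $\rho_0(\C[\G]) = \End(\E_0)$, so $(E_i)$ is a basis of $\End(\E_0)$ and the Gram matrix $G_{ij} := \Tr(E_i E_j) = \chi_{\rho_0}(g_i g_j)$ (with respect to the non-degenerate symmetric bilinear form $(X,Y) \mapsto \Tr(XY)$) is invertible. The hypothesis $g_i g_j \in \G_0$ together with \eqref{equation:trace} gives $|G_{ij} - H_{ij}| \leq \eps$, where $H_{ij} := \Tr(A_i A_j) = \chi_\rho(g_i g_j)$. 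Hence, for $\eps < \eps_0$ small enough (with $\eps_0$ depending only on $\|G^{-1}\|$, hence only on $\rho_0$), $H$ is invertible, $(A_i)$ is a basis of $\End(\E)$ (since $\dim \End(\E) = N_0$), and $\rho$ is irreducible. Let $T : \End(\E) \to \End(\E_0)$ be the linear isomorphism defined by $T(A_i) := E_i$.

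The first key claim is that $\|T(\rho(g)) - \rho_0(g)\| \leq C \eps$ for every $g \in \G_0'$. Expand $\rho_0(g) = \sum_i c_g^i E_i$ and $\rho(g) = \sum_i d_g^i A_i$; then the coordinate vectors solve the linear systems $Gc_g = \alpha_g^0$ and $Hd_g = \alpha_g$, with $(\alpha_g^0)_j := \chi_{\rho_0}(g_j g)$, $(\alpha_g)_j := \chi_\rho(g_j g)$ (obtained by pairing with $E_j$, $A_j$ and using trace-cyclicity). Since $g \in \G_0'$ forces $g_j g \in \G_0$, we get $\|\alpha_g^0 - \alpha_g\|_\infty \leq \eps$; combined with $\|G^{-1} - H^{-1}\| = O(\eps)$ and the uniform bound $\|\alpha_g^0\|_\infty \leq \dim \E_0$, this yields $\|c_g - d_g\|_\infty = O(\eps)$ and the claim. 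Applied to $g = g_i g_j$ (which lies in $\G_0'$ by the sufficiency condition \eqref{eq:sufficiently-large}), this gives $\|T(A_i A_j) - E_i E_j\| = O(\eps)$. Extending by linearity, and using that the expansion coefficients of any $X \in \End(\E)$ in the basis $(A_i)$ are uniformly bounded by $C\|X\|$, we deduce approximate multiplicativity of $T$:
\[
\|T(XY) - T(X)T(Y)\| \leq C\eps \|X\|\|Y\|, \qquad X, Y \in \End(\E).
\]

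The main step --- and what I expect to be the technical core of the argument --- is a perturbative version of Skolem--Noether: an $\eps$-approximate algebra isomorphism $T : \End(\E) \to \End(\E_0)$ between full matrix algebras of the same dimension, with $\|T\|, \|T^{-1}\| = O(1)$, lies $O(\eps)$-close to an inner isomorphism. More precisely, there exists an invertible $\mathbf{Q} \in \Hom(\E, \E_0)$ with $\|T(X) - \mathbf{Q} X \mathbf{Q}^{-1}\| \leq C\eps \|X\|$. One way to establish this is to observe that $T$ sends a fixed system of matrix units in $\End(\E)$ to an $O(\eps)$-approximate system of matrix units in $\End(\E_0)$; a Neumann series (or implicit function theorem) argument corrects the latter to an exact system of matrix units, from which $\mathbf{Q}$ is read off via the diagonal rank-one projections.

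Finally, we upgrade $\mathbf{Q}$ to a unitary. Since $T(\rho(g)) \approx \rho_0(g)$ is unitary for $g \in \G_0'$, the operators $\mathbf{Q}\rho(g)\mathbf{Q}^{-1}$ are approximately unitary; comparing the inverse (equal to the adjoint, by unitarity of $\rho(g)$) with the adjoint of $\mathbf{Q}\rho(g)\mathbf{Q}^{-1}$ and rearranging yields that $\mathbf{Q}^*\mathbf{Q}$ approximately commutes with each $\rho(g_i)^*$ (note $g_i \in \G_0'$). Since $(\rho(g_i)^*)_i$ spans $\End(\E)$, a quantitative Schur-type lemma forces $\mathbf{Q}^*\mathbf{Q} = \lambda\,\mathrm{Id}_\E + O(\eps)$ for some $\lambda > 0$. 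Polar-decomposing $\mathbf{Q} = \mathbf{P}(\mathbf{Q}^*\mathbf{Q})^{1/2}$ with $\mathbf{P} \in \mathrm{U}(\E, \E_0)$, and using that $(\mathbf{Q}^*\mathbf{Q})^{\pm 1/2}$ is an $O(\eps)$ perturbation of a scalar, we obtain $\|T(X) - \mathbf{P} X \mathbf{P}^*\| \leq C\eps \|X\|$. Specialising to $X = \rho(g)$ for $g \in \G_0'$ gives the desired estimate $\|\rho_0(g) - \mathbf{P}\rho(g)\mathbf{P}^*\| \leq C\eps$, completing the proof.
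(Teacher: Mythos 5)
Your proposal is correct in outline and shares the paper's overall skeleton (invertibility of the character Gram matrices, a linear comparison map between the images of the two representations, an approximate-homomorphism estimate on $\mathbf{G}_0'$, and a quantitative Skolem--Noether step finished by polar decomposition), but the execution differs in two genuine ways. First, your coefficient comparison is cleaner than the paper's: you obtain the coordinates of $\rho(g)$ directly as $d_g = H^{-1}\alpha_g$ and use the a priori bound $|\chi_\rho(g_jg)|\le \dim\E$, which bypasses the bootstrap inequality the paper uses in Lemma \ref{lemma:presque} to bound $\|D(g)\|$. Second, for the core step the paper (Lemmas \ref{lemma:commute}, \ref{lemma:norm}, \ref{lemma:stability-final-lemma}, following \v{S}emrl) first shows the comparison map nearly preserves adjoints and is nearly isometric, then builds $\mathbf{Q}x := A(x\otimes\omega^\flat)z$ from a single rank-one element and shows $\mathbf{Q}^*\mathbf{Q}$ is nearly scalar directly; you instead perturb the image of a system of matrix units to an exact system, read off $\mathbf{Q}$ from it, and recover near-scalarity of $\mathbf{Q}^*\mathbf{Q}$ from unitarity of the two representations via approximate commutation and a quantitative Schur lemma -- a route that avoids the paper's near-$*$-preservation lemma altogether. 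Both approaches work; the paper's buys an explicit, self-contained construction with all constants visible, while yours outsources the heavy lifting to a standard (but nontrivial) perturbation fact about approximate matrix units in a matrix algebra, which you only sketch.

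Two details you should supply to make the matrix-unit route airtight: (i) fullness of the approximate units, i.e. $T(\id_\E)=\id_{\E_0}+\mc{O}(\eps)$, which is needed so that the corrected idempotents are $r$ mutually equivalent idempotents summing to the identity of an $r$-dimensional space, hence each of rank one and $\mathbf{Q}$ invertible; this follows in one line from approximate multiplicativity, since $T(\id_\E)E_j = T(\id_\E\cdot A_j)+\mc{O}(\eps) = E_j+\mc{O}(\eps)$ and the $E_j$ span $\End(\E_0)$, but it is not automatic from your stated ingredients because $\mathbf{1}_{\G}$ need not lie in $\mathbf{G}_0$ (cf. Remark \ref{remark:necessary}). (ii) Uniform bounds $\|\mathbf{Q}\|,\|\mathbf{Q}^{-1}\|=\mc{O}(1)$ must be extracted from the corrected matrix units (e.g. via $w_i:=e'_{i1}w_1$ and the identities $c_j w_1 = e'_{1j}\sum_i c_i w_i$), since the approximate-commutation and Schur arguments, as well as the final polar-decomposition estimate, carry constants proportional to these norms. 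With these points filled in, your argument gives the same conclusion with the same $\eps$-dependence.
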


The notation $\rho_0$ is used to insist on the fact that this representation is fixed and that we vary $\rho$ close to $\rho_0$. Here $\|\bullet\|$ denotes the operator norm of a linear map, and in what follows we will sometimes use the Frobenius norm $\|\bullet\|_{\mathrm{Fr}}^2 = \Tr(\bullet^* \cdot \bullet)$. According to Burnside's Theorem (see \eqref{equation:burnside}), the irreducibility assumption implies that $N_0 = (\dim \E_0)^2$.

\begin{remark}\rm \label{remark:necessary}
	We will see below in Lemma \ref{lemma:A-injective} that for $\eps_0 > 0$ small enough (depending on $\rho_0$), $\dim \rho(\mathbb{C}[\mathbf{G}]) = \dim \rho_0(\mathbb{C}[\mathbf{G}])$ which is a crucial step in the argument. Let us briefly comment on the necessity of the assumptions that $\rho_0$ is irreducible and $\dim \E = \dim \E_0$. If we dropped the former assumption (but still keep $\dim \E = \dim \E_0$) it is easy to find examples such that for arbitrarily small $\eps > 0$ there are $\rho$ irreducible and hence $\dim \rho(\mathbb{C}[\mathbf{G}]) > \dim \rho_0(\mathbb{C}[\mathbf{G}])$. To see this, for instance take $\mathbf{G}$ to be generated by two elements and $\rho_0$ to be defined by two $2\times 2$ commuting unitary matrices on these; then $\rho_0$ splits into two irreducible components, but there are sufficiently small perturbations for which these matrices do not commute and which is irreducible. Similarly, there are examples where $\chi_{\rho_0} = \chi_{\rho}$ on $\mathbf{G}_0$ but $\dim \E > \dim \E_0$: for instance, take $\rho_0$ as before, $\mathbf{G}_0$ to be the generating set, and define the representation $\rho$ on $\mathbf{G}_0$ as $\rho := \rho_0 \oplus \begin{pmatrix}
	0 & a\\
	b & 0
	\end{pmatrix}$, where $a, b \in \mathbb{S}^1$ are fixed. Then clearly $\dim \E = \dim \E_0 + 2 = 4$ and $\dim \rho(\mathbb{C}[\mathbf{G}]) > \dim \rho_0(\mathbb{C}[\mathbf{G}])$. In summary, the assumptions that $\rho_0$ is irreducible and $\dim \E = \dim \E_0$ seem to be natural ones to ensure that $\dim \rho(\mathbb{C}[\mathbf{G}]) = \dim \rho_0(\mathbb{C}[\mathbf{G}])$.
	
	Finally, observe that \emph{if} we had taken $\mathbf{1}_{\mathbf{G}}$ inside the generating set $\{g_1, \dotsc, g_{N_0}\}$, then the assumption \eqref{equation:trace} would guarantee (for $\eps < 1$) that $\dim \E = \dim \E_0$. However, in the application we have in mind we cannot make this assumption and hence we work in this more general setting, from which $\dim \E = \dim \E_0$ \emph{cannot be derived} as a consequence (see the previous paragraph for a counterexample).
\end{remark}

\subsection{Proof of the stability estimate}
In this section we prove Lemma \ref{lemma:stability-representation} and to this end we assume its hypotheses. We form the matrices $M_{0}$ and $M$ whose entries are given by $(M_{0})_{i,j=1}^{N_0} := \chi_{\rho_{0}}(g_i g_j)$ and $(M)_{i,j=1}^{N_0} := \chi_{\rho}(g_i g_j)$. We start with a few auxiliary statements.

\begin{lemma}\label{lemma:M-invertible}
The matrix $M_0$ is invertible. Moreover, if $\eps_0 > 0$ is chosen small enough (depending only on $\rho_0$), $M$ is also invertible.
\end{lemma}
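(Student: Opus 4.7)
The plan is to observe that $M_0$ is precisely the Gram matrix of the family $(\rho_0(g_i))_{i=1}^{N_0}$ with respect to the complex bilinear pairing $B(A,C):=\Tr(AC)$ on $\End(\E_0)$, and that this family is in fact a basis. First, since $\rho_0$ is irreducible, Burnside's theorem together with \eqref{equation:burnside} gives $\rho_0(\mathbb{C}[\mathbf{G}])=\End(\E_0)$, so $N_0=(\dim\E_0)^2$ and the chosen $\rho_0(g_i)$ are a basis of $\End(\E_0)$. Second, I will verify non-degeneracy of $B$ on the matrix algebra $M_n(\mathbb{C})$ directly on the elementary matrices: the identity $\Tr(E_{ij}E_{kl})=\delta_{jk}\delta_{il}$ exhibits the Gram matrix of this standard basis as a permutation matrix, in particular invertible. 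The Gram matrix of a basis with respect to a non-degenerate bilinear form is always invertible, hence $M_0$ is invertible.

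For the perturbative statement about $M$, I will use the sufficiently large assumption \eqref{eq:sufficiently-large}, which guarantees $g_i g_j\in\mathbf{G}_0$ for all $i,j\in\{1,\dots,N_0\}$, so that hypothesis \eqref{equation:trace} yields the entrywise bound $|M_{ij}-(M_0)_{ij}|\leq\eps$. It follows that $\|M-M_0\|\leq C(\rho_0)\eps$ in any fixed matrix norm, with a constant depending only on the fixed dimension $N_0=(\dim\E_0)^2$. Since invertibility is open in the space of matrices and $\|M_0^{-1}\|$ depends only on $\rho_0$, a standard Neumann series argument then produces invertibility of $M$ as soon as $\eps<\eps_0$, for some threshold $\eps_0$ depending only on $\|M_0^{-1}\|$ and $N_0$, hence only on $\rho_0$.

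The only mildly subtle point — and it is not really an obstacle — is that the relevant pairing is the \emph{complex bilinear} form $\Tr(AC)$ rather than the positive-definite Hermitian form $\Tr(AC^*)$; consequently non-degeneracy does not follow from positivity and must be established directly, as I outlined above. I expect the downstream role of this lemma in the proof of Lemma \ref{lemma:stability-representation} to be algebraic: inverting $M_0$ (and perturbatively $M$) will allow one to recover $\rho(g)$ for $g\in\mathbf{G}'_0$ from the ``measurements" $\chi_\rho(g_ig_j)$ supplied by \eqref{equation:trace}, and hence to set up the linear system defining the intertwiner $\mathbf{P}$.
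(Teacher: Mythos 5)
Your proposal is correct. The perturbative half is exactly the paper's argument: entrywise closeness $|(M_0)_{ij}-M_{ij}|\leq\eps$ (using $g_ig_j\in\mathbf{G}_0$ from \eqref{eq:sufficiently-large}) plus openness of invertibility, with the threshold depending only on $\rho_0$. For the invertibility of $M_0$, both you and the paper in effect observe that $M_0$ is the Gram matrix of the linearly independent family $(\rho_0(g_i))_i$ with respect to the bilinear pairing $(A,C)\mapsto\Tr(AC)$, and the only real content is the non-degeneracy of this pairing on the image algebra; where you differ is in how that non-degeneracy is obtained. You invoke irreducibility of $\rho_0$ so that, by Burnside, $\rho_0(\C[\mathbf{G}])=\End(\E_0)$, and then check non-degeneracy of the trace form on the full matrix algebra via the elementary matrices. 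The paper instead takes a kernel vector $X$ of $M_0$, forms $Y=\sum_j x_j\rho_0(g_j)$, deduces $\Tr(ZY)=0$ for all $Z\in\rho_0(\C[\mathbf{G}])$, and then uses only that this algebra is stable under adjoints (which follows from \eqref{equation:burnside} for \emph{any} finite-dimensional unitary representation) to take $Z=Y^*$ and conclude $\Tr(Y^*Y)=0$, hence $Y=0$ and $X=0$. Your argument is perfectly legitimate under the hypotheses of Lemma \ref{lemma:stability-representation}, where $\rho_0$ is irreducible; what the paper's adjoint trick buys is that the invertibility of $M_0$ (and hence injectivity of $A$) does not use irreducibility at all, which is exactly what supports the remark after Lemma \ref{lemma:A-injective} that $A$ is injective even for reducible $\rho_0$. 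Your handling of the bilinear-versus-Hermitian subtlety is accurate and the elementary-matrix verification closes it correctly.
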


\begin{proof}
If $M_0$ is invertible, it is clear that $M$ is also invertible for $\eps_0$ small enough since $|(M_0)_{ij}-M_{ij}| < \eps < \eps_0$. The invertibility of $M_0$ follows from the Burnside theorem stated in \eqref{equation:burnside}. Indeed, assume $X = (x_1,\dotso,x_{N_0})^\top \in \mathbb{C}^{N_0}$ is such that $M_0X = 0$. Then for all $i = 1, \dotso, N_0$:
\begin{equation}
\label{equation:annule}
\sum_{j=1}^{N_0} \Tr\big(x_j \rho_0(g_ig_j)\big) = \Tr\big(\rho_0(g_i) \cdot Y\big) = 0,
\end{equation}
where $Y := \sum_{j=1}^{N_0} x_j \rho_0(g_j)$. By assumption, the $\rho_0(g_i)$'s span $\rho_0(\C[\mathbf{G}])$ (for $i = 1, \dotso, N_0$) and thus \eqref{equation:annule} holds by replacing $\rho_0(g_i)$ by an arbitrary $Z \in \rho_0(\C[\mathbf{G}])$. In particular, since the latter is stable by taking the adjoint, we have that $Y^* \in \rho_0(\C[\mathbf{G}])$ and thus $\Tr(Y^*Y) = 0$, that is, $Y=0$ and $X=0$.
\end{proof}

Define the linear map $A : \rho_0(\C[\mathbf{G}]) \rightarrow \rho(\C[\mathbf{G}])$ by setting 
\[A(\rho_0(g_i)) := \rho(g_i), \quad i = 1, \dotso, N_0,\] 
and extending linearly. Set $N := \dim\left(\rho(\C[\mathbf{G}]) \right)$.

\begin{lemma}\label{lemma:A-injective}
The map $A$ is an isomorphism, $N=N_0$, and $\rho$ is irreducible.
\end{lemma}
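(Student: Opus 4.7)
The plan is to deduce all three claims at once by combining a lower bound $N \geq N_0$ (from injectivity of $A$) with an upper bound $N \leq N_0$ (from Burnside's theorem \eqref{equation:burnside}). The main ingredient is the invertibility of $M$ from Lemma \ref{lemma:M-invertible}, together with the hypothesis $\dim \mc{E} = \dim \mc{E}_0$.

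First I would show $A$ is injective. Suppose $\sum_{i=1}^{N_0} c_i \rho_0(g_i) \in \ker A$, meaning $\sum_i c_i \rho(g_i) = 0$. Multiplying on the left by $\rho(g_j)$ and taking traces yields, for each $j \in \{1,\dotsc,N_0\}$,
\[
\sum_{i=1}^{N_0} c_i \chi_\rho(g_j g_i) = 0,
\]
i.e.\ $Mc = 0$ with $c = (c_1,\dotsc,c_{N_0})^\top$. By Lemma \ref{lemma:M-invertible}, $M$ is invertible provided $\eps_0$ is small enough, so $c = 0$. Since $\{\rho_0(g_1),\dotsc,\rho_0(g_{N_0})\}$ is a basis of $\rho_0(\C[\mathbf{G}])$, this shows $\ker A = 0$, and therefore $N \geq N_0 = (\dim \mc{E}_0)^2$ (the last equality uses the irreducibility of $\rho_0$ via \eqref{equation:burnside}).

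Next, Burnside's theorem applied to $\rho$ gives an isotypical decomposition $\mc{E} = \oplus_{i=1}^K V_i^{n_i}$, with $V_i$ pairwise inequivalent irreducibles and $n_i \geq 1$. Writing $d_i := \dim V_i$, we have $N = \sum_{i=1}^K d_i^2$ and $\dim \mc{E} = \sum_{i=1}^K n_i d_i$. Since all quantities are nonnegative,
\[
N = \sum_{i=1}^K d_i^2 \leq \sum_{i=1}^K n_i^2 d_i^2 \leq \Big(\sum_{i=1}^K n_i d_i\Big)^2 = (\dim \mc{E})^2 = (\dim \mc{E}_0)^2 = N_0.
\]
Combined with the lower bound from injectivity, this forces $N = N_0$, so $A$ is an injection between spaces of equal dimension, hence an isomorphism. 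Moreover, equality throughout the chain above forces $K = 1$ and $n_1 = 1$, i.e.\ $\rho$ is irreducible.

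The whole argument is a short piece of linear algebra, and I do not foresee any serious obstacle; the only subtlety is to choose $\eps_0$ small enough for Lemma \ref{lemma:M-invertible} to apply, which only depends on $\rho_0$.
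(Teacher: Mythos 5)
Your proposal is correct and follows essentially the same route as the paper: injectivity of $A$ from the invertibility of $M$ (Lemma \ref{lemma:M-invertible}), the hypothesis $\dim \E = \dim \E_0$ to force $N = N_0$, and Burnside's theorem for irreducibility. The only cosmetic difference is that you obtain the upper bound $N \leq N_0$ via the isotypical decomposition of $\rho$ and an equality analysis, whereas the paper gets it more directly from $\rho(\C[\mathbf{G}]) \subseteq \End(\E)$, so $N \leq (\dim \E)^2 = (\dim\E_0)^2 = N_0$.
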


\begin{proof}
Injectivity of $A$ follows directly from invertibility of $M$ proved in Lemma \ref{lemma:M-invertible}; to show $A$ is an isomorphism, it suffices to prove that $N = N_0$. For the latter, we use that $\dim \E = \dim \E_0$, so $\dim \rho(\mathbb{C}[\mathbf{G}]) \leq (\dim \E_0)^2$, and so using injectivity of $A$ we get $N = N_0$. The irreducibility of $\rho$ then follows from Burnside's theorem.
\end{proof}

\begin{remark}\rm
Without assuming irreducibility of $\rho_0$ or equality of dimensions $\dim \E = \dim \E_0$, it still holds that $A$ is injective. However, in general there is no reason for $A$ to be an isomorphism since the dimension of $\rho(\C[\mathbf{G}])$ tends to jump when perturbing the representation $\rho_0$ (see Remark \ref{remark:necessary} above). 
\end{remark}

In order to simplify notation, we will sometimes write $\mc{O}(\eps)$ for a quantity bounded by $\leq C\eps$, where $C > 0$ is some uniform constant depending only on $\rho_0$ (and not on $\eps,\rho$). Write $d_{0, j}, d_j:  \C[\mathbf{G}] \to \mathbb{C}$ for the linear maps defined by
\[\rho_0(\alpha) = \sum_{j=1}^N d_{0, j}(\alpha) \rho_0(g_j), \quad \rho(\alpha) = \sum_{j=1}^N d_{j}(\alpha) \rho(g_j), \qquad \alpha \in \C[\mathbf{G}].\] 
We have:

\begin{lemma}
\label{lemma:presque}
There exists a constant $C > 0$ (depending only on $\rho_0$) such that for all $g \in \mathbf{G}'_0$:
\begin{equation}\label{eq:A-ineq}
	\|A(\rho_0(g)) - \rho(g)\| \leq C \eps.
\end{equation}
\end{lemma}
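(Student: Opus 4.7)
The idea is to expand $\rho_0(g)$ and $\rho(g)$ in the bases $\{\rho_0(g_j)\}_{j=1}^{N_0}$ of $\rho_0(\C[\mathbf{G}])$ and $\{\rho(g_j)\}_{j=1}^{N_0}$ of $\rho(\C[\mathbf{G}])$ (the latter is a basis thanks to Lemma \ref{lemma:A-injective}) and to show that the two coefficient vectors are $\mc{O}(\eps)$-close. Writing
\[
\rho_0(g) = \sum_{j=1}^{N_0} d_{0,j}(g)\, \rho_0(g_j), \qquad \rho(g) = \sum_{j=1}^{N_0} d_j(g)\, \rho(g_j),
\]
the very definition of $A$ gives $A(\rho_0(g)) = \sum_j d_{0,j}(g)\,\rho(g_j)$, hence
\[
A(\rho_0(g)) - \rho(g) = \sum_{j=1}^{N_0} \big(d_{0,j}(g) - d_j(g)\big)\,\rho(g_j),
\]
and since the $\rho(g_j)$ are unitary the bound \eqref{eq:A-ineq} reduces to the pointwise estimate $|d_{0,j}(g) - d_j(g)| \leq C\eps$.

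To control the coefficients, I would derive linear systems for $\vec{d_0}(g), \vec{d}(g) \in \C^{N_0}$ by multiplying the above expansions by $\rho_0(g_i)$, respectively $\rho(g_i)$, taking traces, and using that $\rho_0, \rho$ are representations (so $\rho_0(g_i)\rho_0(g_j) = \rho_0(g_ig_j)$, and similarly for $\rho$). This yields
\[
M_0\, \vec{d_0}(g) = \vec{v_0}(g), \qquad M\, \vec{d}(g) = \vec{v}(g),
\]
where $\vec{v_0}(g)_i := \chi_{\rho_0}(g_i g)$ and $\vec{v}(g)_i := \chi_{\rho}(g_i g)$, with $M_0, M$ the matrices already introduced before Lemma \ref{lemma:M-invertible}.

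The key point is then how the hypotheses feed into these linear systems. For $g \in \mathbf{G}_0'$, the elements $g_i g$ lie in $\mathbf{G}_0$ by the definition \eqref{eq:prime-notation}, while the elements $g_i g_j$ lie in $\mathbf{G}_0$ by the sufficiently-large condition \eqref{eq:sufficiently-large}. Combined with the character hypothesis \eqref{equation:trace}, this gives the entrywise bounds $|(M_0 - M)_{ij}| \leq \eps$ and $|(\vec{v_0}(g) - \vec{v}(g))_i| \leq \eps$. Since $M_0$ is invertible with $\|M_0^{-1}\|$ depending only on $\rho_0$ (Lemma \ref{lemma:M-invertible}), a Neumann-series argument shows $\|M^{-1}\| \leq 2 \|M_0^{-1}\|$ for $\eps_0$ small enough, and the resolvent identity
\[
\vec{d_0}(g) - \vec{d}(g) = M_0^{-1}\big(\vec{v_0}(g) - \vec{v}(g)\big) + M_0^{-1}(M - M_0)\, M^{-1} \vec{v}(g)
\]
together with the uniform bound $|\vec{v}(g)_i| \leq \dim \E$ (trace of a unitary) delivers $|d_{0,j}(g) - d_j(g)| \leq C\eps$, concluding the proof.

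I do not foresee a real obstacle here; the lemma is essentially bookkeeping around the observation that the two conditions baked into the definitions of $\mathbf{G}_0$ and $\mathbf{G}_0'$ are precisely what is needed to make every entry of both the Gram-type matrix and the right-hand side vector of the defining linear system $\eps$-close in passing from $\rho_0$ to $\rho$.
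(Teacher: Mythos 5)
Your proof is correct and follows essentially the same route as the paper: both expand $\rho_0(g)$ and $\rho(g)$ in the bases $\{\rho_0(g_j)\}$, $\{\rho(g_j)\}$, relate the coefficient vectors to the character data through the matrices $M_0$ and $M$, and use that $g_ig_j, g_ig \in \mathbf{G}_0$ together with \eqref{equation:trace} and the invertibility of $M_0$ (Lemma \ref{lemma:M-invertible}). The only cosmetic difference is that you invert $M$ via a Neumann series and use the a priori bound $|\chi_\rho(g_ig)| \leq \dim\E$ on the right-hand side, whereas the paper avoids $M^{-1}$ by first establishing the uniform bound $\|D(g)\| \leq B$ through an absorption argument and then inverting only $M_0$; both yield a constant depending only on $\rho_0$.
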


In particular, plugging in $g = \mathbf{1}_{\mathbf{G}}$ into \eqref{eq:A-ineq} we get $A(\mathbf{1}_{\E_1}) = \mathbf{1}_{\E_2} + \mc{O}(\eps)$. Moreover, if $\alpha := \sum_{i = 1}^k c_i h_i$ with $c_i \in \C$, $h_i \in \mathbf{G}'_0$ and $k \in \mathbb{Z}_{> 0}$, we obtain:
\[
	\|A(\rho_0(\alpha)) - \rho(\alpha)\| \leq \left(\max_{i \in \{1, \dotso, k\}} |c_i|\right) \cdot Ck\varepsilon.
\]

\begin{proof}
Let $g \in \mathbf{G}_0'$ and define $D_{(0)}(g) := \big(d_{(0), 1}(g), \dotso, d_{(0), N}(g)\big)^\top \in \mathbb{C}^N$. (By putting the index inside brackets we mean that we may both include it or not include it.) Observe that for $i=1, \dotso,N$, we have:
\begin{equation}
\label{equation:comput}
\begin{split}
%\Tr(\rho_0(g_ig)) &  = \sum_{j = 1}^N d_{0, j}(g) \Tr(\rho_0(g_i g_j)) = \left[M_0 D_0(g)\right]_i \\
%& = \Tr(\rho(g_ig)) + \mc{O}(\eps)   = \left[M D(g)\right]_i + \mc{O}(\eps)  \\
%& = \left[M_0 D(g)\right]_i + \mc{O}\big(\eps+ \|M - M_0\| \|D(g)\|\big),
\mc{O}(\varepsilon) = \Tr(\rho_0(g_ig)) - \Tr(\rho_2(g_ig)) &  = \sum_{j = 1}^N \Big(d_{0, j}(g) \Tr(\rho_0(g_i g_j)) - d_{j}(g) \Tr(\rho(g_i g_j))\Big)\\ 
&= \left[M_0 D_0(g)\right]_i - \left[M D (g)\right]_i\\ 
&= \left[M_0 (D_0(g) - D(g))\right]_i + \mc{O}\big(\|M - M_0\| \|D(g)\|\big),
\end{split}
\end{equation}
where $\|D(g)\|^2 := \sum_{j=1}^N |d_j(g)|^2$, and in the first equality we used \eqref{equation:trace} together with the fact that $g \in \mathbf{G}_0' \subset \mathbf{G}_0$. Next, from \eqref{equation:trace} we get $\|M - M_0\| \leq N \eps$. We claim that there exists a constant $B > 0$ depending only on $\rho_0$, such that:
\begin{equation}
\label{equation:bound-uniform}
	\|D(g)\|^2  \leq B, \quad g \in \mathbf{G}_0'.
\end{equation}
Indeed, we have $\rho_0(g) = \sum_{j} d_{0, j}(g) \rho_0(g_j)$ and $\|\rho_0(g)\|_{\Fr}^2 = \rk(\E_0)$ is constant. Hence, by equivalence of norms in $\E_0$, there exists a constant $B_0 > 0$ (depending only on $\rho_0$) such that $\|D_0(g)\|^2 = \sum_j |d_{0, j}(g)|^2 \leq B_0$ for all $g \in \mathbf{G}$. Therefore, using \eqref{equation:comput}, the invertibility of $M_0$, and \eqref{equation:bound-uniform}, we obtain:
\[
C_1 \|D(g)\| \leq \|M_0 D(g)\| \leq \|M_0 D_0 (g)\| + C_2\eps \left(1+ \|D(g)\|\right) \leq C_3 + \eps C_4 \|D(g)\|
\]
for some constants $C_1, C_2, C_3, C_4 >0$ depending only on $\rho_0$. Taking $\eps < \eps_0$ and $\eps_0 < \frac{C_1}{C_4}$, we get \eqref{equation:bound-uniform}. Again using \eqref{equation:comput}, we thus get $\|M_0(D_0(g) - D(g))\| = \mc{O}(\varepsilon)$, which by invertibility of $M_0$ implies $\|D_0(g) - D(g)\| = \mc{O}(\varepsilon)$. Finally this implies that
\[\|A \rho_0(g) - \rho(g)\| = \left\|\sum_j (d_{0, j}(g) - d_j(g)) \rho(g_j)\right\| \leq \sqrt{N} \|D_0(g) - D(g)\| = \mc{O}(\varepsilon),\]
where in the first estimate we used triangle inequality and that $\|\rho(g_j)\| = 1$ for each $j$, as well as the Cauchy-Schwarz inequality. This completes the proof.
\end{proof}

The previous lemma yields:
\begin{lemma}
\label{lemma:mult}
There exists a constant $C > 0$, depending only on $\rho_1$, such that for all $u,v \in \rho_0(\C[\mathbf{G}])$,
\[
\|A(uv)-A(u)A(v)\| \leq C \eps \|u\| \|v\|
\]
\end{lemma}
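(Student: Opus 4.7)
The plan is to reduce the estimate to an application of Lemma \ref{lemma:presque} by expanding $u$ and $v$ in the spanning family $\rho_0(g_1), \dotsc, \rho_0(g_{N_0})$, and then controlling the coefficients via finite-dimensional equivalence of norms.

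First, I would write $u = \sum_{i=1}^{N_0} \alpha_i \rho_0(g_i)$ and $v = \sum_{j=1}^{N_0} \beta_j \rho_0(g_j)$ with $\alpha_i, \beta_j \in \mathbb{C}$ (these decompositions exist and are unique since $\rho_0(g_1), \dotsc, \rho_0(g_{N_0})$ form a basis of $\rho_0(\C[\mathbf{G}])$, as $N_0 = \dim \rho_0(\C[\mathbf{G}])$). By linearity of $A$, one has $A(u)A(v) = \sum_{i,j} \alpha_i \beta_j \rho(g_i)\rho(g_j) = \sum_{i,j} \alpha_i \beta_j \rho(g_ig_j)$, while $uv = \sum_{i,j} \alpha_i \beta_j \rho_0(g_ig_j)$ and so $A(uv) = \sum_{i,j} \alpha_i \beta_j A(\rho_0(g_ig_j))$.

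Next, I would invoke Lemma \ref{lemma:presque} applied to each $g_ig_j$. Here one uses that $\G_0$ is sufficiently large: by \eqref{eq:sufficiently-large} we have $g_ig_j \in \G_0$ and $g_k g_ig_j \in \G_0$ for every $k$, whence $g_ig_j \in \G_0'$ by the defining condition \eqref{eq:prime-notation}. Consequently, $\|A(\rho_0(g_ig_j)) - \rho(g_ig_j)\| \leq C_1 \eps$ for a constant $C_1 > 0$ depending only on $\rho_0$. Summing and using the triangle inequality yields
\[
\|A(uv) - A(u)A(v)\| \leq C_1 \eps \sum_{i,j=1}^{N_0} |\alpha_i||\beta_j| \leq C_1 N_0\, \eps \left(\sum_i |\alpha_i|^2\right)^{1/2}\left(\sum_j |\beta_j|^2\right)^{1/2},
\]
by the Cauchy--Schwarz inequality.

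Finally, I would conclude using equivalence of norms on the finite-dimensional space $\rho_0(\C[\G])$: since the map $(\alpha_i) \mapsto \sum_i \alpha_i \rho_0(g_i)$ is a linear isomorphism from $\C^{N_0}$ onto $\rho_0(\C[\G])$, there exists a constant $C_2 > 0$ (depending only on $\rho_0$) such that $\bigl(\sum_i |\alpha_i|^2\bigr)^{1/2} \leq C_2 \|u\|$ and similarly for $v$. Combining these bounds gives the desired estimate with $C := C_1 C_2^2 N_0$, which depends only on $\rho_0$. There is no substantive obstacle here; the argument is mechanical once Lemma \ref{lemma:presque} is in hand, with the only subtlety being to verify membership $g_ig_j \in \G_0'$ so that Lemma \ref{lemma:presque} is applicable.
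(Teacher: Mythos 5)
Your proposal is correct and follows essentially the same route as the paper: expand $u,v$ in the basis $\rho_0(g_1),\dotsc,\rho_0(g_{N_0})$, apply Lemma \ref{lemma:presque} to each product $g_ig_j$ (noting $g_ig_j\in\mathbf{G}_0'$ because $\mathbf{G}_0$ is sufficiently large), and conclude by summing with Cauchy--Schwarz and equivalence of norms on the finite-dimensional space $\rho_0(\C[\mathbf{G}])$. This is exactly the paper's argument, written out in slightly more detail.
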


\begin{proof}
Assume without of generality that $u$ and $v$ have unit operator norm. Write $u = \sum_{i} u_i \rho_0(g_i)$ and $v = \sum_i v_i \rho_0(g_i)$; by equivalence of norms then $\sum_i u_i^2$ and $\sum_i v_i^2$ are bounded by a constant $C_1 > 0$ which depends only on $\rho_0$. Then:
\[\|A(uv) - A(u)A(v)\| \leq \left\|\sum_{i, j} u_i v_j (A(\rho_0(g_i g_j)) - \rho(g_i g_j))\right\| \leq N C_1 C \varepsilon,\]
where in the last inequality we used the arithmetic-geometric mean inequality and Lemma \ref{lemma:presque} (note that $g_i g_j \in \mathbf{G}_0'$ since $\mathbf{G}_0$ is sufficiently large). This proves the claim.
\end{proof}

Next, we show that $A$ nearly commutes with the adjoint (with a uniform bound on the remainder).

\begin{lemma}
\label{lemma:commute}
There exists a constant $C > 0$, depending only on $\rho_0$, such that for all $u \in  \rho_0(\C[\mathbf{G}])$,
\[
\|A(u^*)-(Au)^*\| \leq C \eps \|u\|.
\]
\end{lemma}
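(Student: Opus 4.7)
The plan is to reduce the statement to checking it on the generators $\rho_0(g_i)$ by linearity, and then exploit unitarity plus the near-multiplicativity of $A$ (Lemma \ref{lemma:mult}).

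First I would show the key estimate at the generators: $\|A(\rho_0(g_i)^*) - \rho(g_i)^*\| \leq C\eps$ for each $i = 1, \dotsc, N_0$. Note that $\rho_0(g_i)^* \in \rho_0(\C[\mathbf{G}])$ because $\rho_0(\C[\mathbf{G}])$ is stable under adjoints (a consequence of \eqref{equation:burnside}), so $A$ is defined on it. Now the unitarity $\rho_0(g_i) \rho_0(g_i)^* = \id_{\E_0}$, combined with Lemma \ref{lemma:mult} (applied to the pair $u = \rho_0(g_i)$, $v = \rho_0(g_i)^*$, both of operator norm $1$) and the definition $A(\rho_0(g_i)) = \rho(g_i)$, yields
\[
A(\id_{\E_0}) = A\big(\rho_0(g_i)\rho_0(g_i)^*\big) = \rho(g_i)\, A(\rho_0(g_i)^*) + \mc{O}(\eps).
\]
Using the identity $A(\id_{\E_0}) = \id_{\E} + \mc{O}(\eps)$ (the remark following Lemma \ref{lemma:presque}), and multiplying on the left by $\rho(g_i)^*$, which has operator norm $1$ and satisfies $\rho(g_i)^*\rho(g_i) = \id_{\E}$ by unitarity, I obtain
\[
A(\rho_0(g_i)^*) = \rho(g_i)^* + \mc{O}(\eps).
\]

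To conclude, take an arbitrary $u \in \rho_0(\C[\mathbf{G}])$ and expand $u = \sum_{i=1}^{N_0} u_i\, \rho_0(g_i)$ in the basis $\{\rho_0(g_i)\}$. Then $u^* = \sum_i \overline{u_i}\, \rho_0(g_i)^*$, so by $\C$-linearity of $A$ (noting $A$ is linear and the conjugate-linear $*$ is applied first to the coefficients),
\[
A(u^*) = \sum_i \overline{u_i}\, A(\rho_0(g_i)^*), \qquad (Au)^* = \sum_i \overline{u_i}\, \rho(g_i)^*.
\]
Subtracting and applying the generator estimate gives $\|A(u^*) - (Au)^*\| \leq C \eps \sum_i |u_i|$. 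Since $\rho_0(\C[\mathbf{G}])$ is finite-dimensional and the $\rho_0(g_i)$ form a basis, all norms are equivalent and there is a constant $C_0 > 0$ (depending only on $\rho_0$) such that $\sum_i |u_i| \leq C_0 \|u\|$. This gives the desired inequality with a constant depending only on $\rho_0$.

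There is no real obstacle here beyond the bookkeeping of constants: the crucial inputs are the near-multiplicativity supplied by Lemma \ref{lemma:mult}, the identification $A(\id_{\E_0}) \approx \id_{\E}$ noted after Lemma \ref{lemma:presque}, unitarity of each $\rho_0(g_i)$ and $\rho(g_i)$, and the fact that $\rho_0(\C[\mathbf{G}])$ is closed under taking adjoints (from \eqref{equation:burnside}). Each of these contributes a constant depending only on $\rho_0$, so the final estimate is uniform in $\rho$ for $\eps < \eps_0$ small enough.
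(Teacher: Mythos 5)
Your proposal is correct and follows essentially the same route as the paper: reduce to the generators by linearity and equivalence of norms, and at each generator use near-multiplicativity of $A$ together with $A(\mathbf{1}_{\E_0}) = \mathbf{1}_{\E} + \mc{O}(\eps)$ and unitarity of $\rho(g_i)$ to get $A(\rho_0(g_i)^*) = \rho(g_i)^* + \mc{O}(\eps)$. The only cosmetic difference is that you invoke Lemma \ref{lemma:mult} as a black box, whereas the paper expands $\rho_0(g_i)^* = \sum_j d_{0,ij}\rho_0(g_j)$ and applies Lemma \ref{lemma:presque} to the products $g_ig_j$ directly, which is exactly the special case of that lemma you use.
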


\begin{proof}
Write $u = \sum_{i} u_i \rho_0(g_i)$ for some $u_i \in \mathbb{C}$. As in the previous proofs, we may assume that $u$ has unit operator norm, and consequently $u_i$ are uniformly bounded depending only on $\rho_0$. By Burnside's theorem (see \eqref{equation:burnside}), we can write 
\begin{equation}\label{eq:adjoint-basis}
	\rho_0(g_i)^* = \rho_0\Big(\underbrace{\sum_{j=1}^N d_{0, ij}g_j}_{=:\alpha_i}\Big), \quad d_{0, ij} \in \mathbb{C}, \quad i \in \{1, \dotso, N\}.
\end{equation}
Therefore
\[A(u^*) - (Au)^* = \sum_i \bar{u}_i \big(\rho(\alpha_i) - \rho(g_i)^*\big),\]
and it suffices to show for each $i$ that $\rho(\alpha_i) - \rho(g_i)^* = \mc{O}(\eps)$. Again by equivalence of norms, $d_{0, ij}$ are uniformly bounded and hence
\begin{align*}
	\rho(g_i) \rho(\alpha_i) &= \rho(g_i \alpha_i) = \sum_j d_{0, ij} \rho(g_i g_j)\\
	&= \sum_j d_{0, ij} A\big(\rho_0(g_i g_j)\big) + \mc{O}(\varepsilon) = A \big(\rho_0(g_i) \rho_0(\alpha_i)\big) + \mc{O}(\varepsilon)\\
	&= A(\mathbf{1}_{\E_0}) + \mc{O}(\varepsilon) = \mathbf{1}_{\E} + \mc{O}(\varepsilon).
\end{align*}
Here in the third and sixth equalities we used Lemma \ref{lemma:presque}. Thus, multiplying by $\rho(g_i)^*$ on the left we get $\rho(\alpha_i) = \rho(g_i)^* + \mc{O}(\eps)$ (here we use that $\rho(g_i)$ is unitary), proving the claim and completing the proof of the lemma.
\end{proof}

The following lemma shows that $A$ is nearly unitary.

\begin{lemma}
\label{lemma:norm}
There exists a constant $C > 0$, depending only on $\rho_0$ such that
\[
\|Au\|_{\mathrm{Fr}}^2 \leq (1 + C\eps)\|u\|_{\mathrm{Fr}}^2,\, \forall u \in \rho_0(\mathbb{C}[\mathbf{G}]), \qquad \|A^{-1}v\|_{\mathrm{Fr}}^2 \leq (1+C\eps)\|v\|_{\mathrm{Fr}}^2,\, \forall v \in \rho(\mathbb{C}[\mathbf{G}]).
\]
\end{lemma}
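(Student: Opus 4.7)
The plan is to derive both inequalities from a single two-sided estimate
\begin{equation*}
	\|Au\|_{\Fr}^2 = \|u\|_{\Fr}^2 + \mc{O}(\eps \|u\|_{\Fr}^2), \qquad u \in \rho_0(\C[\mathbf{G}]),
\end{equation*}
where, as elsewhere in this section, $\mc{O}(\cdot)$ stands for a quantity bounded by a constant depending only on $\rho_0$ times its argument. Call this estimate $(\star)$. The upper bound on $\|A^{-1}v\|_{\Fr}^2$ follows immediately by substituting $u := A^{-1}v$ into the lower-bound half of $(\star)$: this gives $\|v\|_{\Fr}^2 \geq (1-C\eps)\|A^{-1}v\|_{\Fr}^2$, and inverting (for $\eps<\eps_0$ small enough) yields the second inequality of the lemma with constant $2C$.

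To prove $(\star)$, I would expand $\|Au\|_{\Fr}^2 = \Tr((Au)^*(Au))$ and apply the preceding lemmas of this section in turn. Lemma \ref{lemma:commute} writes $(Au)^* = A(u^*) + F$ with $\|F\| = \mc{O}(\eps\|u\|)$, and Lemma \ref{lemma:mult} writes $A(u^*)A(u) = A(u^*u) + G$ with $\|G\| = \mc{O}(\eps\|u\|^2)$. Using the crude bound $|\Tr(M)| = \mc{O}(\|M\|)$ valid on the fixed-dimensional space $\End(\E)$, together with a uniform bound $\|A(u)\| = \mc{O}(\|u\|_{\Fr})$ (obtained from $A(u) = \sum_i u_i \rho(g_i)$, the unitarity $\|\rho(g_i)\|=1$, and the coefficient-to-Frobenius norm equivalence on $\rho_0(\C[\mathbf{G}])$, whose constant depends only on $\rho_0$ as in the proof of Lemma \ref{lemma:presque}), these errors contribute $\mc{O}(\eps\|u\|_{\Fr}^2)$ after tracing, producing
\begin{equation*}
	\Tr\bigl((Au)^*(Au)\bigr) = \Tr\bigl(A(u^*u)\bigr) + \mc{O}(\eps\|u\|_{\Fr}^2).
\end{equation*}

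The remaining step is a near-trace-preservation statement for $A$: for any $w \in \rho_0(\C[\mathbf{G}])$, expanding $w = \sum_i w_i \rho_0(g_i)$ gives $A(w) = \sum_i w_i \rho(g_i)$, so that
\begin{equation*}
	\Tr(A(w)) - \Tr(w) = \sum_{i=1}^N w_i\bigl(\chi_\rho(g_i) - \chi_{\rho_0}(g_i)\bigr),
\end{equation*}
which is bounded by $\eps \sum_i|w_i| = \mc{O}(\eps\|w\|_{\Fr})$ using \eqref{equation:trace} (applicable because $g_i \in \mathbf{G}_0$ by the sufficiently-large hypothesis), Cauchy--Schwarz, and coefficient-norm equivalence. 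Applying this with $w = u^*u$ and using the standard inequality $\|u^*u\|_{\Fr} \leq \|u\|\,\|u\|_{\Fr} \leq \|u\|_{\Fr}^2$ closes the argument for $(\star)$. I do not expect any serious obstacle: the structural work is already done in Lemmas \ref{lemma:presque}--\ref{lemma:commute}, and the only real bookkeeping is verifying that every implicit constant depends solely on $\rho_0$, which is automatic from the previous statements.
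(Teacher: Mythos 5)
Your argument is correct, and all the constants it invokes (norm equivalence on $\rho_0(\C[\mathbf{G}])$, the dimension bound $|\Tr(M)|=\mc{O}(\|M\|)$, the smallness threshold needed to invert $1-C\eps$ for the $A^{-1}$ bound) indeed depend only on $\rho_0$, as required; the derivation of the second inequality from the two-sided estimate is exactly how it is meant to follow in the paper as well. The route differs from the paper's in its bookkeeping: the paper expands $u=\sum_i u_i\rho_0(g_i)$, writes $\|Au\|_{\Fr}^2=\sum_{i,j}\Tr(\rho(g_i)\rho(g_j)^*)u_i\bar u_j$, replaces $\rho(g_j)^*$ by $A(\rho_0(g_j)^*)$ via Lemma \ref{lemma:commute}, expands the adjoint in the basis through the coefficients $d_{0,jk}$ of \eqref{eq:adjoint-basis}, and then applies the character hypothesis \eqref{equation:trace} at the \emph{products} $g_ig_k\in\mathbf{G}_0$; it never invokes Lemma \ref{lemma:mult}. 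You instead factor the computation through Lemma \ref{lemma:mult} to pass from $A(u^*)A(u)$ to $A(u^*u)$, and then use \eqref{equation:trace} only at the generators $g_i$ via the near-trace-preservation identity $\Tr(A(w))-\Tr(w)=\sum_i w_i(\chi_\rho(g_i)-\chi_{\rho_0}(g_i))$ applied to $w=u^*u$ (legitimate, since $\rho_0(\C[\mathbf{G}])$ is a $*$-closed algebra by \eqref{equation:burnside}). The two proofs use the same underlying information — since Lemma \ref{lemma:mult} itself rests on Lemma \ref{lemma:presque} and hence on \eqref{equation:trace} at products — but yours is more modular and makes the "$A$ is an approximate trace-preserving $*$-homomorphism" structure explicit, at the cost of an extra appeal to Lemma \ref{lemma:mult}; the paper's coordinate computation is marginally more self-contained.
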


\begin{proof}
Write $u = \sum_i u_i \rho_0(g_i) \in \rho_0(\C[\mathbf{G}])$. Then we have:
\begin{align*}
\|Au\|^2_{\Fr} & = \sum_{i, j} \Tr(\rho(g_i)\rho(g_j)^*)u_i \bar{u}_j  = \sum_{i, j} \Tr(\rho(g_i) A (\rho_0(g_j)^*)) u_i \bar{u}_j + \mc{O}(\varepsilon) \|u\|_{\Fr}^2\\
& = \sum_{i, j, k} d_{0, jk} \Tr(\rho(g_i g_k))u_i \bar{u}_j + \mc{O}(\eps)\|u\|_{\Fr}^2\\ 
&= \sum_{i, j, k}  d_{0, jk} \Tr(\rho_0(g_i) \rho_0(g_k))u_i \bar{u}_j + \mc{O}(\varepsilon) \|u\|_{\Fr}^2 = \|u\|^2_{\Fr}(1+\mc{O}(\eps)).
\end{align*}
Here in the second equality we used Lemma \ref{lemma:commute} (and the facts that $\rho(g_i)$ is unitary as well as the equivalence of norms in $\rho_0(\mathbb{C}[\mathbf{G}])$), in the third equality we used the notation of \eqref{eq:adjoint-basis}, in the fourth we used \eqref{equation:trace} and the fact that $d_{0, jk}$ are uniformly bounded, while in the fifth we used \eqref{eq:adjoint-basis} again. This completes the proof.
\end{proof}

We next show how to go from the nearly unitary isomorphism $A$ on the level of algebras, to a nearly unitary equivalence of representations.

\begin{lemma}\label{lemma:stability-final-lemma}
There exist a constant $C >0$ (depending only on $\rho_0$) and $\mathbf{P} \in \mathrm{U}(\E, \E_0)$ such that for all $g \in \mathbf{G}'_{0}$:
\[
	\|\rho_0(g) - \mathbf{P} \rho(g) \mathbf{P}^* \| \leq C \eps.
\]
\end{lemma}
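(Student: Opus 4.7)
The plan is to use the fact that, by Burnside's theorem and Lemma \ref{lemma:A-injective}, the irreducibility of $\rho$ and $\rho_0$ together with $\dim \E = \dim \E_0 =: n$ forces $\rho_0(\C[\mathbf{G}]) = \End(\E_0)$ and $\rho(\C[\mathbf{G}]) = \End(\E)$. Thus $A: \End(\E_0) \to \End(\E)$ is a linear isomorphism which, by Lemmas \ref{lemma:mult} and \ref{lemma:commute}, is $\eps$-close to being a unital $*$-algebra isomorphism of full matrix algebras. The rigidity of exact $*$-isomorphisms of $M_n(\C)$ (they are all inner, $u \mapsto QuQ^*$) should approximately persist, and the main task is to extract the near-unitary $Q$ explicitly. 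Setting $\mathbf{P} := Q^*$ then yields $\mathbf{P} \rho(g) \mathbf{P}^* = \mathbf{P} A(\rho_0(g)) \mathbf{P}^* + O(\eps) = \rho_0(g) + O(\eps)$ for $g \in \mathbf{G}_0'$, using Lemma \ref{lemma:presque} for the first equality and $A \approx Q(\bullet) Q^*$ for the second.

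The construction of $Q$ goes as follows. Fix an orthonormal basis $e_1,\dots,e_n$ of $\E_0$ and consider the matrix units $E_{ij} := e_i \otimes e_j^*$. Set $F_{ij} := A(E_{ij})$. The relations $E_{ii}^2 = E_{ii}$, $E_{ii}^* = E_{ii}$, $E_{ii}E_{jj} = 0$ for $i \neq j$, and $\sum_i E_{ii} = \mathbf{1}_{\E_0}$ are transported by $A$ to the same relations on the $F_{ii}$ up to $O(\eps)$ (by Lemmas \ref{lemma:mult}, \ref{lemma:commute}, and the remark after Lemma \ref{lemma:presque}). A standard spectral perturbation argument then replaces each $F_{ii}$ by a genuine self-adjoint projector $\widetilde{F}_{ii}$; the approximate orthogonality and completeness conditions, together with the fact that all $n$ projectors must sit inside a space of total dimension $n$, force each $\widetilde{F}_{ii}$ to have rank exactly one. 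Pick any unit vector $f_1 \in \ran(\widetilde{F}_{11})$ and define $f_i := F_{i1} f_1 \in \E$ for $i = 1,\dots,n$.

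An elementary computation using only the near-multiplicativity $F_{ij}F_{kl} = F_{il}\delta_{jk} + O(\eps)$ and $F_{ij}^* = F_{ji} + O(\eps)$ shows that $\langle f_i, f_j \rangle = \langle f_1, F_{1i} F_{j1} f_1 \rangle + O(\eps) = \delta_{ij} + O(\eps)$, so the $(f_i)$ form an approximately orthonormal basis of $\E$. Letting $R: \E_0 \to \E$ be the linear map $e_i \mapsto f_i$, the Gram matrix $R^*R$ is $\mathbf{1}_{\E_0} + O(\eps)$, so the polar factor $Q := R(R^*R)^{-1/2}$ is a unitary with $\|Q - R\| = O(\eps)$. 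Then for a matrix unit $u = E_{ji}$ one verifies $A(u) f_k = F_{ji}F_{k1} f_1 = \delta_{ik} f_j + O(\eps)$, which matches $QuQ^* \tilde f_k$ up to $O(\eps)$ (where $\tilde f_k := Qe_k$); expanding any $u \in \End(\E_0)$ of operator norm $\leq 1$ in this finite basis and summing the (uniformly bounded in $n$) contributions gives $\|A(u) - QuQ^*\| = O(\eps)$.

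The main obstacle, and the only non-formal step, is the rank-one extraction in step two: we must turn the inequalities $\|F_{ii}^2 - F_{ii}\|, \|F_{ii}^* - F_{ii}\| = O(\eps)$ and $\|\sum_i F_{ii} - \mathbf{1}_{\E}\| = O(\eps)$ into the existence of genuine rank-one orthogonal projectors close to the $F_{ii}$. This is a standard fact about near-projections (functional calculus plus a Riesz-projector contour integral, or a direct spectral argument), but it is the one point where we use that $n$ is fixed and $\eps_0$ depends on $\rho_0$; for $\eps_0 > 0$ small enough in terms of $n$, the eigenvalues of the self-adjoint parts of the $F_{ii}$ cluster $O(\eps)$-close to $0$ and $1$, and a counting argument using $\sum_i F_{ii} \approx \mathbf{1}_{\E}$ pins down the rank. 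Once this is done, all remaining steps are straightforward perturbative computations producing the desired constant $C$ depending only on $\rho_0$.
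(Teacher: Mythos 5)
Your proposal is essentially correct, but it follows a genuinely different route from the paper. The paper's proof (inspired, as it remarks, by \v{S}emrl's approach to Noether--Skolem) never constructs a system of matrix units: it fixes a \emph{single} rank-one projector $\omega\otimes\omega^\flat$, uses Lemma \ref{lemma:norm} to produce a unit vector $z\in\E$ with $|\langle z, A(\omega\otimes\omega^\flat)z\rangle| > r^{-1}+\mc{O}(\eps)$, defines $\mathbf{Q}x := A(x\otimes\omega^\flat)z$, shows via Lemmas \ref{lemma:mult} and \ref{lemma:commute} that $\mathbf{Q}^*\mathbf{Q}$ is an $\mc{O}(\eps)$-perturbation of a scalar multiple of $\mathbf{1}_{\E_0}$ and that $\mathbf{Q}(ux)=A(u)\mathbf{Q}x+\mc{O}(\eps\|u\|_{\Fr}\|x\|)$, and then unitarises by the polar decomposition --- exactly the device you use at the end. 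Your version instead transports the whole system of matrix units $E_{ij}$, extracts genuine rank-one projectors near the $F_{ii}$, and builds the approximately orthonormal frame $f_i=F_{i1}f_1$; this yields a very explicit ``$A$ is approximately inner'' statement $A\approx Q(\bullet)Q^*$, at the cost of a spectral-perturbation step that the paper's single-vector trick sidesteps. Both arguments invoke Lemma \ref{lemma:presque} in the same way to pass from the algebra map $A$ to the representations on $\mathbf{G}_0'$, and both give constants depending only on $\rho_0$.

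One step is not justified as you state it: the claim that approximate orthogonality and completeness, together with dimension counting, ``force each $\widetilde{F}_{ii}$ to have rank exactly one.'' Those conditions only give $\sum_i \operatorname{rank}\widetilde{F}_{ii}=n$; they do not exclude an imbalance such as $\widetilde{F}_{11}$ of rank two and $\widetilde{F}_{22}=0$ (for $n=2$, the exact configuration $F_{11}=\mathbf{1}_{\E}$, $F_{22}=0$ satisfies every relation you list). What rules this out is an input you never cite, namely Lemma \ref{lemma:norm}: the two-sided Frobenius bounds give $\|F_{ii}\|_{\Fr}^2 = \|A(E_{ii})\|_{\Fr}^2 = 1+\mc{O}(\eps)$, so the self-adjoint part of $F_{ii}$ has exactly one eigenvalue near $1$, which pins the rank of $\widetilde F_{ii}$ at one (alternatively, the relations $F_{1i}F_{i1}=F_{11}+\mc{O}(\eps)$ and $F_{i1}F_{1i}=F_{ii}+\mc{O}(\eps)$, together with the equality of the spectral radii of $F_{1i}F_{i1}$ and $F_{i1}F_{1i}$, force all the ranks to coincide). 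With this one-line repair the construction goes through: the near-orthonormality of the $f_i$, the polar factor $Q=R(R^*R)^{-1/2}$, the verification on matrix units and its linear extension, and the final step $\mathbf{P}:=Q^*$ combined with Lemma \ref{lemma:presque} are all correct, with constants depending only on $\rho_0$ as required.
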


\begin{proof}
Assume that we have constructed a $\mathbf{P} \in \mathrm{U}(\E, \E_0)$ such that for all $i=1, \dotso,N$:
\begin{equation}
\label{equation:todo}
\|A(\rho_0(g_i)) - \mathbf{P}^* \rho_0(g_i) \mathbf{P}\| \leq C \eps.
\end{equation}
We claim that the result then follows. Indeed, we have, for $g \in \mathbf{G}_0'$:
\begin{align*}
	\rho(g) &= A(\rho_0(g)) + \mc{O}(\eps)  = A \left(\sum_{j=1}^N d_{0, j}(g) \rho_0(g_j) \right) +\mc{O}(\eps)\\
	&= \mathbf{P}^* \sum_{j=1}^N d_{0, j}(g)  \rho_0(g_i) \mathbf{P} + \mc{O}(\eps) = \mathbf{P}^* \rho_0(g) \mathbf{P}  + \mc{O}(\eps),
\end{align*}
where in the first equality we used Lemma \ref{lemma:presque} and in the third equality we used \eqref{equation:todo} and the fact that $|d_{0, j}(g)|$ are uniformly bounded (see \eqref{equation:bound-uniform}).

Let us now prove \eqref{equation:todo}; we will write $r := \dim \E$. Pick a unit vector $\omega \in \E_0$ and denote by $\omega^\flat \in (\E_0)^*$ the linear form defined by $\omega^\flat(\bullet) := \langle \bullet, \omega \rangle_{\E_0}$. We have $\|\omega \otimes \omega^\flat\|_{\Fr} = 1$ and so we get (recall that $\rho_0(\mathbb{C}[\mathbf{G}]) = \End(\E_0)$ in this case by irreducibility of $\rho_0$ and \eqref{equation:burnside}):
\[
	\|A(\omega\otimes\omega^\flat)\|_{\Fr}^2 = 1 +\mc{O}(\eps) \leq r \|A(\omega \otimes \omega^\flat)\|^2,
\]
where we used Lemma \ref{lemma:norm} in the first equality and the elementary estimate $\|\bullet\|_{\Fr} \leq \sqrt{r} \|\bullet\|$ in the second one. Hence, we can find a unit vector $z \in \E$ such that 
\begin{equation}\label{eq:A-lower-bound}
	\|A(\omega \otimes \omega^\flat)z\|^2 > r^{-1} + \mc{O}(\eps).
\end{equation} 
From \eqref{eq:A-lower-bound} we can also derive the following estimate: 
\begin{equation*}
	\langle{z, A(\omega \otimes \omega^\flat)z}\rangle_{\E} = \langle{z, \left[A(\omega \otimes \omega^\flat)\right]^2 z}\rangle_{\E} + \mc{O}(\varepsilon) = \|A(\omega \otimes \omega^\flat)z\|^2_{\E} + \mc{O}(\varepsilon),
\end{equation*}
where in the first equality we used Lemma \ref{lemma:mult} (together with the fact that $\omega \otimes \omega^\flat$ is a projector), in the second one we used Lemma \ref{lemma:commute}, while in the last one we applied \eqref{eq:A-lower-bound}. Note that $\langle{z, A(\omega \otimes \omega^\flat)z}\rangle_{\E}$ might have non-zero angular part, which is $\mc{O}(\varepsilon)$ close to $1$ by the previous estimate and so we get 
\begin{equation}\label{eq:A-lower-bound-2}
	|\langle{z, A(\omega \otimes \omega^\flat)z}\rangle_{\E}| >  r^{-1} + \mc{O}(\eps).
\end{equation}
Define $\mathbf{Q} : \E_0 \to \E$ by 
\[\mathbf{Q}x := A(x \otimes \omega^\flat)z \in \E.\]
Next, we show that $\mathbf{Q}$ is nearly unitary (up to constant), that is, we claim that
\begin{equation}\label{eq:Q-nearly-unitary} 
	\|\mathbf{Q}^*\mathbf{Q} - \langle{z, A(\omega \otimes \omega^\flat) z}\rangle_{\E_0} \mathbf{1}_{\E_0}\| = \mc{O}(\varepsilon).
\end{equation}
(Later we will divide $\mathbf{Q}$ by the constant factor to obtain something nearly unitary.) In particular, it follows that $\|\mathbf{Q}\|_{\E_0 \to \E} = \mc{O}(1)$ is uniformly bounded. To see this, take arbitrary $x, y \in \E_0$. Then
\begin{align*}
	\langle{\mathbf{Q}x, \mathbf{Q}y}\rangle_{\E} &= \langle{A(x \otimes \omega^\flat) z, A(y \otimes \omega^\flat)z}\rangle_{\E} = \left\langle{z, \left[A(x \otimes \omega^\flat)\right]^* A(y \otimes \omega^\flat) z}\right\rangle_{\E_0}\\
	&= \langle{z, A(\omega \otimes x^\flat) A(y \otimes \omega^\flat) z}\rangle_{\E_0} + \mc{O}(\varepsilon) \|x\|_{\E_0} \|y\|_{\E_0}\\
	&= \langle{x, y}\rangle_{\E_0} \langle{z, A(\omega \otimes \omega^\flat) z}\rangle_{\E_0} + \mc{O}(\varepsilon) \|x\|_{\E_0} \|y\|_{\E_0},
\end{align*}
where in the second line we used Lemma \ref{lemma:commute} (and that the adjoint of $x \otimes \omega^\flat$ is $\omega \otimes x^\flat$) and in the final line we used Lemma \ref{lemma:mult} (as well as $\omega \otimes x^{\flat} \circ y \otimes \omega^\flat = \langle{y, x}\rangle_{\E_0} \omega \otimes \omega^\flat$). The claim follows by taking the maximum over all unit $y = x \in \E_0$.

For $u \in \rho_0(\C[\mathbf{G}])$ and $x \in \E_0$, using Lemma \ref{lemma:mult}, we get:
\begin{equation}\label{eq:Q-maps}
	\mathbf{Q}(ux)  = A(ux \otimes \omega^\flat)z = A(u)A(x \otimes \omega^\flat)z + \mc{O}(\eps \|u\|_{\Fr} \|x\|_{\E_0}) = A(u) \mathbf{Q}x +  \mc{O}(\eps \|u\|_{\Fr} \|x\|_{\E_0}).
\end{equation}

It follows from \eqref{eq:Q-nearly-unitary} and \eqref{eq:A-lower-bound-2} that for $\varepsilon \in [0, \varepsilon_0)$ small enough (depending only on $\rho_0$), we have that $\mathbf{Q}: \E_0 \to \E$ is an isomorphism. At this stage, we \emph{re-define} 
\[
	\mathbf{Q} := \frac{\mathbf{Q}}{\sqrt{|\langle{z, A(\omega \otimes \omega^\flat) z}\rangle_{\E_0}}|},\]
where this is well-defined thanks to \eqref{eq:A-lower-bound-2} and satisfies thanks to \eqref{eq:Q-maps} and \eqref{eq:Q-nearly-unitary} that:
\begin{equation}\label{eq:Q-re-define}
	\|\mathbf{Q}^*\mathbf{Q} - \mathbf{1}_{\E_0}\| = \mc{O}(\varepsilon), \quad \mathbf{Q}(ux) = A(u) \mathbf{Q}x +  \mc{O}(\eps) \|u\|_{\Fr} \|x\|_{\E_0},\quad  \forall x \in \E_0, u \in \rho_0(\mathbb{C}[\mathbf{G}]).
\end{equation}
Define $\mathbf{P} \in \operatorname{U}(\E, \E_0)$ setting its inverse to be the unitarisation of $\mathbf{Q}$, that is, $\mathbf{P}^{-1} := \mathbf{Q}(\mathbf{Q}^* \mathbf{Q})^{-1/2}$. Then for each $i = 1, \dotso, N$
\begin{align*}
	\|\mathbf{P} A(\rho_0(g_i)) \mathbf{P}^* - \rho(g_i)\| &= \|(\mathbf{Q}^* \mathbf{Q})^{\frac{1}{2}} \mathbf{Q}^{-1} A(\rho_0(g_i)) \mathbf{Q} (\mathbf{Q}^*\mathbf{Q})^{-\frac{1}{2}} - \rho(g_i)\|\\
	&= \|(\mathbf{Q}^* \mathbf{Q})^{\frac{1}{2}} \rho_0(g_i) (\mathbf{Q}^*\mathbf{Q})^{-\frac{1}{2}} - \rho(g_i)\| + \mc{O}(\varepsilon) = \mc{O}(\varepsilon),
\end{align*}
where in the second and third equalities we used the second and the first identities in \eqref{eq:Q-re-define}, respectively. This proves \eqref{equation:todo} and completes the proof.
\end{proof}

\begin{remark}\rm
	In the previous proof we were inspired by the approach of \v{S}emrl (see \cite{Semrl-06}) to the Noether-Skolem theorem.
\end{remark}

We now simply record that the above lemmas prove the claimed stability of representations result.

\begin{proof}[Proof of Lemma \ref{lemma:stability-representation}]
	The proof follows immediately from Lemma \ref{lemma:stability-final-lemma}, which in turn depends on Lemmas \ref{lemma:M-invertible}, \ref{lemma:A-injective}, \ref{lemma:presque}, \ref{lemma:mult}, \ref{lemma:commute}, and \ref{lemma:norm}.
\end{proof}

\section{Approximate non-Abelian Liv\v{s}ic cocycle Theorem}\label{sec:approximate-livsic}

In this section, we prove the approximate non-Abelian Liv\v{s}ic cocycle Theorem \ref{theorem:approximate-livsic}. Throughout the section, $\M$ is a smooth closed manifold equipped with a transitive Anosov flow $(\varphi_t)_{t \in \R}$, generated by a smooth vector field $X$. We divide the proof into four steps.

\subsection{Existence of a good orbit}

\label{sssection:good}

We introduce the notion of \emph{transversal separation}:

\begin{definition}
Let $\delta > 0$. We say that a subset $S \subset \M$ is $\delta$-\emph{transversally separated} if for all $x \in S$, we have:
\[
	\cup_{y \in W^s_\delta(x)} W^u_\delta(y)  \cap S = \left\{ x \right\}.
\]
\end{definition}

The following lemma is one of the key ingredients in the proof of Theorem \ref{theorem:approximate-livsic}. A similar statement was proved in \cite[Lemma 3.4]{Gouezel-Lefeuvre-19} for the purpose of proving a stability estimate for the $X$-ray transform. For brevity, in the proof we will often only refer to the mentioned lemma and to the Shadowing Lemma (see \cite[Theorem 6.2.4]{Fisher-Hasselblatt-19}).

\begin{lemma}\label{lemma:separation+dense}
There exist $\beta_s, \beta_d, \eps_0 > 0$ such that the following holds. Assume $\varepsilon < \varepsilon_0$. There exists a homoclinic orbit $\gamma_\eps \in \mc{H}$ of length $T(\gamma_\eps) \leq \eps^{-1/2}$ whose trunk $\mc{T}_\eps$ is $\eps^{\beta_s}$-transversally separated and $\eps^{\beta_d}$-dense in $\M$. 
\end{lemma}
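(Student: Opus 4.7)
The strategy follows that of \cite[Lemma 3.4]{Gouezel-Lefeuvre-19}. I would fix the time budget $T := \eps^{-1/2}$ and choose exponents $0 < \beta_d < \beta_s$ whose precise values are pinned down in terms of the contraction rate $\mu > 0$ from \eqref{eq:speed-of-convergence} and the dimension $n := \dim \M$. The plan is to build a pseudo-orbit in $\M$ that is $\eps^{\beta_d}$-dense, $\eps^{\beta_s}$-transversally separated, starts on $W^u_{\loc}(x_\star)$, and ends on $W^s_{\loc}(x_\star)$; the Anosov shadowing lemma will then produce a true homoclinic orbit $\gamma_\eps \in \mc{H}$ inheriting both properties (up to universal constants) and satisfying $T(\gamma_\eps) \leq T$.

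For the density part, fix an $\eps^{\beta_d}$-net $\{q_1, \dotso, q_N\} \subset \M$ of cardinality $N \lesssim \eps^{-n\beta_d}$. The transitivity of $(\varphi_t)_{t \in \R}$, equivalently the specification/connecting property for Anosov flows, provides, for any $p, p' \in \M$, an orbit segment of time $\mc{O}(|\log\eps|)$ starting within $\eps^{\beta_d}/8$ of $p$ and ending within $\eps^{\beta_d}/8$ of $p'$. I would concatenate such segments in an order visiting each $q_i$ in turn, with initial endpoint on $W^u_{\loc}(x_\star)$ and final endpoint on $W^s_{\loc}(x_\star)$, yielding a pseudo-orbit of total time $\mc{O}(N|\log\eps|)$. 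Taking $\beta_d$ small enough so that, say, $n\beta_d < 1/4$, this time is well below $T/2$, leaving room for later refinements.

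The principal obstacle is to simultaneously enforce $\eps^{\beta_s}$-transversal separation. I would strengthen the construction by requiring the net $\{q_i\}$ itself to be $\eps^{\beta_s}$-transversally separated; such nets still exist with $N \lesssim \eps^{-n\beta_d}$ points when $\beta_s > \beta_d$, by a greedy/Vitali argument and a comparison between the transversal and intrinsic metrics at these scales. Since the shadowing orbit stays within distance $\eps^{\beta_d}/2$ of the pseudo-orbit in $\M$, and since the $W^{s/u}$ directions are exponentially contracted/expanded by the flow (quantified by $\mu$), any failure of transversal separation along the trunk $\mc{T}_\eps$ would force either two distinct $q_i$'s to lie in the same stable-unstable slab (excluded by construction) or a connecting segment to accidentally cross the local slab of an earlier point of the trunk. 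Such accidental crossings can be ruled out by perturbing the specification endpoints within scales $\gtrsim \eps^{\beta_s}/2$: only $\mc{O}(N^2)$ forbidden configurations must be avoided, which the entropy budget comfortably accommodates when $\beta_d, \beta_s$ are chosen appropriately relative to $\mu$.

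Finally, once the pseudo-orbit is in place, the Anosov shadowing lemma together with the Bowen bracket \eqref{eq:bowen-bracket} applied at the endpoints yields a true orbit lying on $W^u(x_\star)$ as $t \to -\infty$ and on $W^s(x_\star)$ as $t \to +\infty$, that is, a genuine homoclinic orbit $\gamma_\eps \in \mc{H}$. Lemma \ref{lemma:fundamental-domain} then identifies the crossings $x_{u/s}(\gamma_\eps) \in \mathbf{D}_{u/s}$ and confirms $T(\gamma_\eps) \leq \eps^{-1/2}$. The truly delicate step, and the main obstacle, is the simultaneous control of density and transversal separation, which pull in opposite directions; their balance is encoded in the strict inequality $\beta_d < \beta_s$ together with the quantitative use of $\mu$.
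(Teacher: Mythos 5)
Your overall route coincides with the paper's: both reduce the construction to \cite[Lemma 3.4]{Gouezel-Lefeuvre-19} and then obtain a homoclinic rather than a periodic orbit by shadowing a concatenation that begins and ends along $\gamma_\star$. However, the step you yourself flag as the main obstacle --- the $\eps^{\beta_s}$-transversal separation of the \emph{whole trunk} --- is not actually established, and this is a genuine gap rather than a missing detail. Requiring the net $\{q_i\}$ to be $\eps^{\beta_s}$-transversally separated buys nothing: the $q_i$ can be taken $\eps^{\beta_d}/2$-separated in the ambient metric, and since $\eps^{\beta_s} \ll \eps^{\beta_d}$ this property is automatic; more importantly it concerns only finitely many points, whereas separation must hold for every pair of points of $\mc{T}_\eps$, almost all of which lie in the interiors of the connecting segments and are only constrained to pass within $\eps^{\beta_d}/8 \gg \eps^{\beta_s}$ of the net. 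Likewise, ``perturbing the specification endpoints to avoid $\mc{O}(N^2)$ forbidden configurations'' is not a finite counting problem: for each pair of (possibly equal) connecting segments the condition to be avoided is a transversal $\eps^{\beta_s}$-approach anywhere along their lengths, including self-approaches of a single segment of length $\sim|\log\eps|$; an endpoint perturbation of size $\eps^{\beta_s}$ changes the resulting connecting orbit in a way you do not control, and no quantitative argument is given that some admissible perturbation avoids all near-collisions simultaneously while preserving density and the endpoint conditions.

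The mechanism that actually produces separation in \cite{Gouezel-Lefeuvre-19} --- and which the paper simply imports wholesale --- is structural rather than perturbative: the segments $\gamma_{x_i}$ are built to return near $x_\star$ and to make marked excursions to a second reference point away from $\gamma_\star$, so that if two points of the resulting orbit were transversally closer than $\eps^{\beta_s}$, hyperbolicity would force their orbits to shadow each other for a time of order $|\log\eps|$ in both directions, and the marking then forces the two points to coincide. Without this (or an equivalent substitute) your construction does not yield the separation claim. Beyond that, the paper's proof is essentially bookkeeping on top of \cite[Lemma 3.4]{Gouezel-Lefeuvre-19}: one glues the sequence $\dotsc,\gamma_\star,\gamma_\star,\gamma_{x_1},\dotsc,\gamma_{x_N},\gamma_\star,\gamma_\star,\dotsc$ with the Shadowing Lemma to obtain $\gamma_\eps \in \mc{H}$, and then checks that the trunk, cut off at the crossings of $\mathbf{D}_{u/s}$, can only miss end pieces of length $\mc{O}(|\log\eps|)$, so that density, separation and $T(\gamma_\eps)\leq \eps^{-1/2}$ survive --- a verification your sketch also skips when you assert that Lemma \ref{lemma:fundamental-domain} ``confirms'' the length bound and that the trunk inherits the properties of the pseudo-orbit.
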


Let us remark that the exponents can be made explicit: although they are not optimal in \cite[Lemma 3.4]{Gouezel-Lefeuvre-19}, one has $\beta_d = \tfrac{1}{3 \dim(\M)}$ and $\beta_s$ depends on the Lyapunov exponents of the flow (in a less explicit fashion).

\begin{proof}
Following the proof of \cite[Lemma 3.4]{Gouezel-Lefeuvre-19} we let $p_ 1 := x_\star$ and set the periodic point $p_2 \neq p_1$ to be arbitrary. The proof is exactly the same as in \cite[Lemma 3.4]{Gouezel-Lefeuvre-19} up to the sentence ``we can glue the sequence ..." to produce points $x_i \in \M$ and orbit segments $\gamma_{x_i}$ (starting and ending within $2\eps$ of $x_\star$), where it has to be adapted in the following way: one glues instead the sequence
\[
\dotsc, \gamma_\star, \gamma_\star, \gamma_{x_1}, \gamma_{x_2}, \dotsc, \gamma_{x_N}, \gamma_\star, \gamma_\star, \dotsc
\]
using the Shadowing Lemma in order to produce a homoclinic orbit $\gamma_\varepsilon$. The orbit segments $\gamma_{x_i}$ make excursions to some fixed points $z_0$ outside a neighbourhood of $\gamma_{\star}$. The homoclinic orbit $\gamma_\eps$ is $\mc{O}(\eps)$-shadowing the previous concatenation of segments and so (taking $\varepsilon_0 > 0$ small enough) the trunk $\mc{T}_\eps$ of $\gamma_\eps$ is contained in a segment shadowing the concatenation $\gamma_{x_1}, \gamma_{x_2}, \dotsc, \gamma_{x_N}$. More precisely, since $\mathbf{D}_{u/s}$ are at a positive finite distance from $x_\star$ the trunk $\mc{T}_\eps$ could only possibly miss small segments of $\gamma_{x_1}$ or $\gamma_{x_N}$ of size $\mc{O}(|\log \eps|)$ corresponding, in the construction and notation of \cite{Gouezel-Lefeuvre-19}, to orbit pieces $\gamma'_\pm$ associated to the points $x_1$ or $x_N$. It is straightforward to check that the same arguments as in \cite[Lemma 3.4]{Gouezel-Lefeuvre-19} apply to show that $\mc{T}_\eps$ is $\eps^{\beta_s}$-separated, $\eps^{\beta_d}$-dense, and of length $\leq \eps^{-1/2}$. 
\end{proof}

\subsection{Application of representation theory}
We assume the notation of Theorem \ref{theorem:approximate-livsic}. The length function of Definition \ref{definition:length} can be extended naturally to $\G$ by setting:
\[
T(\gamma_1^{k_1} \dotsm \gamma_n^{k_n}) := k_1 T(\gamma_1) + \dotsb + k_n T(\gamma_n).
\]
Let $g_1, \dotsc, g_{N_0} \in \G$ such that $\mathrm{span}\left\{\rho_0(g_i) ~|~ i=1, \dotsc, N_0 \right\} = \rho_0(\C[\G])$ and define $N_0 = \dim(\rho_0(\C[G]))$. Define $\mc{A}_\eps \subset \G$ as:
\[
\mc{A}_\eps := \left\{ \gamma_1^{k_1} \dotsm \gamma_n^{k_n} \in \G \mid n \leq |\log(\eps)|, \, \forall i = 1, \dotsc, n, \, T(\gamma_i) \leq \eps^{-1/2},\, k_i \leq |\log(\eps)|\right\}.
\]
We will also assume that $\eps < \eps_0$, where $\eps_0$ is chosen small enough (depending on $b_0$) so that for $i=1, \dotsc ,N_1$, $g_i \in \mc{A}_\eps$. Also, write $r := \rk(\E)$ and $r_0 := \rk(\E_0)$.

\begin{lemma}\label{lemma:from-trace-to-parry}
Under the assumptions of Theorem \ref{theorem:approximate-livsic} (we do not need irreducibility of $\rho_0$ at this stage), for each $\tau_0 < \frac{1}{2}$ there exists $\eps_0 > 0$ such that the following holds for all $0 < \varepsilon < \varepsilon_0$
\begin{equation}\label{eq:eps/6}
	|\chi_{\rho_0}(g)-\chi_{\rho}(g)| \leq \eps^{\tau_0}, \quad \forall g \in \mc{A}_{\eps/6},
\end{equation}
and also $\rk(\E) = \rk(\E_0)$.
\end{lemma}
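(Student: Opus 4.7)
The plan is to reduce the character closeness to a comparison of Wilson loops of closed Anosov orbits built via shadowing, and then invoke the hypothesis \eqref{equation:wilson-small}. For $g = \gamma_1^{k_1} \cdots \gamma_n^{k_n} \in \mc{A}_{\eps/6}$, I would first use the approximation \eqref{eq:approximate-parry} with a winding parameter $m$ of order $|\log\eps|$ to replace each $\rho(\gamma_i)$ by the explicit product $\rho(\gamma_i; m, m) = C(x_\star, T_\star)^{-m}A(\gamma_i; m, m)C(x_\star, T_\star)^{-m}$, where $A(\gamma_i;m,m)$ is a parallel transport along a flow segment of length $T(\gamma_i) + 2m T_\star$ with small corrections near $x_\star$. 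The uniform boundedness of $b$ by $(K_0, R_0)$ combined with Lemma \ref{lemma:uniform-bound-curvature} controls $\|F_\nabla\|_{C^0}$, so each substitution error is of order $e^{-mT_\star\mu}$, and since $g$ has at most $|\log\eps|^2$ factors the cumulative error is negligible.

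Second, by the Anosov Shadowing Lemma I would find a true closed orbit $\gamma_{\mathrm{cl}}$ of the flow shadowing the pseudo-orbit obtained by concatenating the extended trunks $[x_u(\gamma_i; m), x_s(\gamma_i; m)]$ in the order prescribed by $g$, with small transit jumps near $x_\star$ between consecutive segments. The period then satisfies $\ell(\gamma_{\mathrm{cl}}) \leq C|\log\eps|^2\eps^{-1/2}$. Using H\"older continuity of parallel transport under $C^0$-perturbations of the orbit (with constant controlled by the uniform bounds on $\X$), the Wilson loop $\W(b, \gamma_{\mathrm{cl}})$ agrees with the trace of the natural product of holonomies along the concatenation up to error of order $\ell(\gamma_{\mathrm{cl}})\cdot e^{-mT_\star\mu}$. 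By cyclicity of trace and careful reconciliation of the factors $C(x_\star, T_\star)^{-m}$ appearing in $\rho(\gamma_i;m,m)$ with the windings along $\gamma_{\mathrm{cl}}$ (possibly using the hypothesis on the orbits $\gamma_\star^k$ to absorb a residual discrepancy between $C(x_\star,T_\star)$ and $C_0(x_\star,T_\star)$), one identifies $\W(b, \gamma_{\mathrm{cl}})$ with $\chi_\rho(g)$ up to controlled errors, and likewise for $b_0$.

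Applying \eqref{equation:wilson-small} then gives $|\W(b_0, \gamma_{\mathrm{cl}}) - \W(b, \gamma_{\mathrm{cl}})| \leq (\eps/6)\ell(\gamma_{\mathrm{cl}}) \leq C\eps^{1/2}|\log\eps|^2 \leq \eps^{\tau_0}$ for any $\tau_0 < 1/2$ and $\eps$ sufficiently small, yielding the main inequality. For the rank equality, one applies the inequality to $g = \mathbf{1}_{\G} \in \mc{A}_{\eps/6}$: the traces reduce to $\rk(\E)$ and $\rk(\E_0)$ respectively, and since these are integers differing by less than $1$ they must coincide. The main obstacle is the bookkeeping in the second step, namely realigning the factors of $C(x_\star, T_\star)^{-m}$ built into Parry's representation with those that naturally appear along the shadowing orbit; the factor of $1/6$ in $\mc{A}_{\eps/6}$ provides the slack needed for this reconciliation, and the exponent $\tau_0 < 1/2$ in the conclusion reflects the length bound $T(\gamma_i) \leq \eps^{-1/2}$ from the definition of $\mc{A}_\eps$ up to logarithmic losses.
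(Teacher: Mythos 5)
Your argument for the character estimate follows the same route as the paper (approximate Parry's representation by parallel transport along a long segment winding around $\gamma_\star$, shadow by a genuine closed orbit, compare via the Ambrose--Singer bound from Lemma \ref{lemma:uniform-bound-curvature}, then apply \eqref{equation:wilson-small}), but it misses one point that the paper has to handle explicitly: the hypothesis \eqref{equation:wilson-small} is stated only for \emph{primitive} closed orbits $\gamma \in \mc{G}^\sharp$. With your symmetric winding parameter $m$ on both sides, the shadowing orbit need not be primitive --- e.g.\ for $g = \gamma^k$ the concatenated pseudo-orbit is a $k$-fold repetition of one block, so the shadowed closed orbit is (an iterate of) a shorter orbit, and the hypothesis cannot be applied to it. The paper forces primitivity by an asymmetric choice of windings satisfying \eqref{eq:shadow-primitive-condition}, so that one excursion around $\gamma_\star$ is longer than the rest of the orbit; some such device is needed in your construction as well. (Your choice $m \sim |\log\eps|$ also only gives a substitution error $\eps^{cT_\star\mu}$ per factor, so the constant in front of $|\log\eps|$ must be taken large depending on $T_\star\mu$, or one simply takes $m \sim \eps^{-1/2}$ as in the paper; this is minor.)

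The rank equality is a genuine gap. You ``apply the inequality to $g = \mathbf{1}_{\G}$'', but your proof of \eqref{eq:eps/6} only covers nonempty words: the empty word corresponds to no orbit, there is nothing to shadow, and the Wilson loop data compares traces of holonomies along closed orbits, never traces of identity maps --- so the estimate at $\mathbf{1}_{\G}$ is exactly what is \emph{not} delivered by the shadowing argument, and invoking it is circular. Nor can one deduce equality of ranks abstractly from character closeness on nonempty words (compare Remark \ref{remark:necessary}, where characters agree on a generating set while the dimensions differ). The paper instead proves $\rk(\E) = \rk(\E_0)$ by a separate pigeonhole argument: for a fixed $\gamma \in \mc{A}_{\eps/6}$, since $\rho(\gamma)$ and $\rho_0(\gamma)$ are unitary there is a common exponent $j \leq N^{r_0+r} \leq |\log\eps|$ with $\|\rho(\gamma)^j - \id\|, \|\rho_0(\gamma)^j - \id\| \leq 3\pi/N$, so that $\gamma^j$ still lies in $\mc{A}_{\eps/6}$ and the first part applied to $\gamma^j$ gives $|r - r_0| < 1$. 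You need this (or some substitute) to close the second claim.
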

Note that we ask the character condition in \eqref{eq:eps/6} to hold for $g \in \mc{A}_{\eps/6}$ (and not, say, for $g \in \mc{A}_{\eps}$). We do this for two reasons: firstly, to apply Lemma \ref{lemma:stability-representation}, we need some space to deal with the fact that its conclusion holds on a slightly smaller set $\mathbf{G}_0'$, and secondly, because at some point below (see \eqref{eq:new-homoclinic}) we might need to slightly leave the set $\mc{A}_{\eps} \subset \mc{A}_{\eps/2}$ and we need some extra space for this as well.

We can thus apply Lemma \ref{lemma:stability-representation}, which provides a unitary isomorphism $\mathbf{P}_\star \in \mathrm{U}(\E,\E_0)(x_\star)$ over the point $x_\star$ and a constant $C > 0$ (depending only on $b_0$) such that: 
\begin{equation}
\label{equation:equality-approx}
	\|\rho_0(g) - \mathbf{P}_\star \rho(g)\mathbf{P}^{-1}_\star\| \leq C \eps^{\tau_0}, \quad \forall g \in \mc{A}_{\eps/2}.
\end{equation}
Here we used the inclusion $\mc{A}_{\eps/2} \subset \mc{A}_{\eps/6}'$, where we recall the prime notation was introduced in \eqref{eq:prime-notation} (we use here that $\log 3 > 1$).

\begin{proof}
We will follow the proof of \cite[Proposition 3.19]{Cekic-Lefeuvre-22}. Let us first deal with a single homoclinic orbit $\gamma \in \mc{H}$. We will use the notation from \S \ref{sssec:parry-monoid-anosov} and \S \ref{ssec:parry-rep}. The points $x_{s/u}(\gamma; N)$ are $\mc{O}(e^{- N T_\star \mu })$-close to $x_\star$, hence $\mc{O}(e^{- n T_\star \mu })$-close to $x_\star$ for $N \geq n$; therefore the segment $[x_{u}(\gamma; N), x_s(\gamma; n)]$ is shadowed by a genuine periodic point $y$ of period 
\begin{equation}\label{eq:shadow-period}
	T_{y} = T(\gamma) + (n + N) T_\star + \mc{O}(e^{-n T_\star \mu}).
\end{equation}
The argument from \cite[Proposition 3.19]{Cekic-Lefeuvre-22} ensures that as soon as $N$ is large enough so that 
\begin{equation}\label{eq:shadow-primitive-condition}
	NT_\star > T(\gamma) + 2nT_\star +  \mc{O}(e^{-nT_\star \mu}) \geq (N - 1)T_\star,
\end{equation}
the closed orbit $\gamma$ is primitive (more precisely, under the condition \eqref{eq:shadow-primitive-condition} the segment of length $(N - n) T_\star$ on the left is larger than its complement). Next, by the shadowing property and by the Ambrose-Singer formula \cite[Lemma 3.14, Items (1) and (2)]{Cekic-Lefeuvre-22} we have
\begin{equation}\label{eq:shadow-parry}
	\rho(\gamma; N, n) = C_{y \rightarrow x_*} C(y, T_{y}) C_{x_* \rightarrow y} + \mc{O}(e^{-n T_\star \mu}).
\end{equation}
Note that here we implicitly used Lemma \ref{lemma:uniform-bound-curvature} to guarantee the uniformity of the remainder. Combining the estimates above we conclude that:
\begin{align}\label{eq:shadow-chain}
\begin{split}
\Tr(\rho(\gamma)) & = \Tr(\rho(\gamma; N, n)) + \mc{O}(e^{-nT_\star \mu}) \\
& = \Tr(C(y, T_y)) + \mc{O}(e^{-nT_\star \mu})\\
& = \Tr(C_0(y, T_y) + \mc{O}(e^{-nT_\star \mu}) + \mc{O}(\eps T_{y})\\
& = \Tr(\rho_0(\gamma; N, n)) + \mc{O}(e^{-nT_\star \mu}) + \mc{O}(\eps T_{y})\\ 
&= \Tr(\rho_0(\gamma)) + \mc{O}(e^{-nT_\star \mu}) + \mc{O}(\eps T_{y}).
\end{split}
\end{align}
Here we used \eqref{eq:approximate-parry} in the first line, \eqref{eq:shadow-parry} in the second one, while in the third line we used the assumption of Theorem \ref{theorem:approximate-livsic}, in the fourth we used the version of \eqref{eq:shadow-parry} for $C_0$ and $\X_0$, while in the fifth line we used \eqref{eq:approximate-parry} again. Taking $n = \lceil{\varepsilon^{-1/2}}\rceil$ in \eqref{eq:shadow-chain}, we conclude that the remainder is of size $\mc{O}(\varepsilon^{1/2})$ (here we use that $T(\gamma) \leq \sqrt{6}\varepsilon^{-1/2}$), which proves the claim in this case (note that here the excess of powers of $\varepsilon$ is used to swallow the constant in front).

For the general case of a concatenation $g \in \mc{A}_{\varepsilon/6}$ of several $\gamma \in \mc{H}$ we argue analogously as in \eqref{eq:shadow-chain} and omit the argument, which produces the estimate
\[|\chi_{\rho}(g) - \chi_{\rho}(g)| = \mc{O}(|\log \varepsilon|^2 \varepsilon^{1/2}).\]
This completes the proof of the first claim.

For the second claim, fix an arbitrary $\gamma \in \mc{A}_{\eps/6}$. We would like to use a quantitative version of the fact that $\rho(\gamma)^{k_n} \to \id_{\E(x_\star)}$ as $n \to \infty$ for some sequence $k_n \to \infty$. Using that $\rho(\gamma)$ and $\rho_0(\gamma)$ are unitary, hence diagonalisable, using the pigeon-hole principle we get that for each $N \in \mathbb{Z}_{>0}$ with $N > N_0$ (where $N_0$ is a uniform constant) there exists $j = j(N) \in \{1, \dotsc, N^{r_0 + r}\}$ such that
\begin{equation}\label{eq:pigeonhole}
	\|\rho(\gamma)^j - \id_{\E(x_\star)}\| \leq \frac{3\pi}{N}, \quad \|\rho_0(\gamma)^j - \id_{\E_0(x_\star)}\| \leq \frac{3\pi}{N}.
\end{equation}
If we assume that $|\log(\varepsilon)| > N^{r_0 + r}$ (valid for $\eps_0 > 0$ small enough), then
\[
	|r - r_0| \leq \frac{3\pi (r_0 + R_0)}{N} + |\Tr (\rho(\gamma)^j) - \Tr(\rho_0(\gamma)^j)| \leq \frac{3\pi (r_0 + R_0)}{N} + \varepsilon^{\tau_0},
\]
where in the first estimate we used triangle inequality and \eqref{eq:pigeonhole}, as well as that $r \leq R_0$, and in the second estimate we used the first part of the lemma. It follows that as soon as $\varepsilon$ is so small that $\varepsilon^{\tau_0} < \frac{1}{2}$, $N$ is so large that $N > 6\pi (r_0 + R_0)$, and in turn $\varepsilon$ small enough so that $|\log(\varepsilon)| > N^{r_0 + R_0}$, we get that $|r - r_0| < 1$ and so $r \equiv r_0$, completing the proof.
\end{proof}

Having defined $\mathbf{P}_\star$ in \eqref{equation:equality-approx}, for $\gamma \in \mc{H}$ and $x \in \gamma$ we set:
\begin{equation}\label{eq:p_-def}
p_-(x) := \mathbf{H}^c_{x_u(\gamma) \to x}\mathbf{H}^u_{x_\star \to x_u(\gamma)} \mathbf{P}_\star, \quad 
p_+(x) := \mathbf{H}^c_{x_s(\gamma) \to x}\mathbf{H}^s_{x_\star \to x_s(\gamma)} \mathbf{P}_\star,
\end{equation}
where $\mathbf{H}^{s/u/c}$ denotes the stable/unstable/central holonomy of the linear unitary extension $(\Hom(\E, \E_0), \Hom(h, h_0), \X^{\Hom})$ defined in \eqref{eq:hom-operator}; its Parry's representation $\rho^{\Hom}$ then acts in the following way
\[
	\rho^{\Hom}(g)p = \rho_0(g)p\rho^{-1}(g), \quad p \in \mathrm{Hom}(\E, \E_0)(x_\star), \quad g \in \mathbf{G}.
\]
This is because the parallel transport $\mathbf{P}(\bullet, \bullet)$ of $\X^{\Hom}$ along the flowlines acts as 
\[
	\mathbf{P}(x, t) p(x) = P_{0}(x, t) p (x) P(x, -t) \in \Hom(\E, \E_0)(\varphi_t x),
\] 
where $p \in \Hom(\E, \E_0)(x)$; here $P_{0}(\bullet, \bullet)$ and $P(\bullet, \bullet)$ denote parallel transports of $\X_0$ and $\X$ along the flowlines, respectively (the notation is similar to \S \ref{ssec:parry-rep}). In what follows, for all $x, y \in \M$ sufficiently close we will denote by $\mathbf{P}_{x \to y}$ the parallel transport with respect to the homomorphism connection on $\Hom(\E, \E_0)$ (where we recall that the connections on $\E$ and $\E_0$ are the ones given by Lemma \ref{lemma:uniform-bound-curvature}) along the unit speed geodesic between $x$ and $y$. The following holds similarly to \cite[Lemma 3.23]{Cekic-Lefeuvre-22}:

\begin{lemma}
\label{lemma:small-trunk}
There exists $C > 0$ (depending only on $b_0$) such that the following holds. For all $\gamma \in \mc{H}$ such that $T(\gamma) \leq \sqrt{2} \eps^{-1/2}$, for all $x \in \gamma$, $\|p_-(x)-p_+(x)\| \leq C \eps^{\tau_0}$.
\end{lemma}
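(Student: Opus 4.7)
The plan is to translate the representation-theoretic near-equivalence \eqref{equation:equality-approx} into a pointwise estimate along the trunk of $\gamma$. I will first reduce the comparison at an arbitrary $x \in \gamma$ to a comparison at the endpoint $x_s(\gamma)$, then further reduce the latter to a bound on $\rho^{\Hom}(\gamma)\mathbf{P}_\star - \mathbf{P}_\star$, and finally invoke \eqref{equation:equality-approx} after observing that $\gamma$ lies in $\mc{A}_{\varepsilon/2}$.

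For the first reduction, \eqref{eq:p_-def} immediately gives $p_+(x) = \mathbf{H}^c_{x_s(\gamma) \to x}\,p_+(x_s(\gamma))$, while the cocycle identity $\mathbf{H}^c_{x_u(\gamma) \to x} = \mathbf{H}^c_{x_s(\gamma) \to x}\,\mathbf{H}^c_{x_u(\gamma) \to x_s(\gamma)}$ for central holonomy along the flow yields $p_-(x) = \mathbf{H}^c_{x_s(\gamma) \to x}\,p_-(x_s(\gamma))$. Since $(\Hom(\E, \E_0),\Hom(h, h_0), \X^{\Hom})$ is itself a unitary extension (as both $(\E, h, \X)$ and $(\E_0, h_0, \X_0)$ are), its central holonomy is a fibrewise isometry, and hence
\[
	\|p_-(x) - p_+(x)\| = \|p_-(x_s(\gamma)) - p_+(x_s(\gamma))\|,
\]
independently of $x$.

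Next I would apply the isometry $\mathbf{H}^s_{x_s(\gamma) \to x_\star}$ to both $p_\pm(x_s(\gamma))$ and read off from \eqref{eq:parry-rep-def} applied to $\X^{\Hom}$ that $\mathbf{H}^s_{x_s(\gamma) \to x_\star}p_+(x_s(\gamma)) = \mathbf{P}_\star$ and $\mathbf{H}^s_{x_s(\gamma) \to x_\star}p_-(x_s(\gamma)) = \rho^{\Hom}(\gamma)\mathbf{P}_\star$. Since Parry's representation on $\Hom(\E, \E_0)$ acts by conjugation, $\rho^{\Hom}(\gamma)\mathbf{P}_\star = \rho_0(\gamma)\mathbf{P}_\star\rho(\gamma)^{-1}$, so right-multiplication by the unitary $\rho(\gamma)$ reduces the bound to controlling $\|\rho_0(\gamma)\mathbf{P}_\star - \mathbf{P}_\star\rho(\gamma)\|$.

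To conclude, the hypothesis $T(\gamma) \leq \sqrt{2}\,\varepsilon^{-1/2} = (\varepsilon/2)^{-1/2}$ shows that $\gamma$, viewed as the length-one word in $\mathbf{G}$ with $n = k_1 = 1$, lies in $\mc{A}_{\varepsilon/2}$; the estimate \eqref{equation:equality-approx} then supplies $\|\rho_0(\gamma) - \mathbf{P}_\star\rho(\gamma)\mathbf{P}_\star^{-1}\| \leq C\varepsilon^{\tau_0}$ with $C$ depending only on $b_0$, and right-multiplication by the isometry $\mathbf{P}_\star$ gives the claim. The only care points are the conjugation formula for $\rho^{\Hom}$ and the fact that $\mathbf{H}^{s,c,u}$ on $\Hom(\E, \E_0)$ are fibrewise isometries; both are built into the setup of \S\ref{ssec:parry-rep}, so no genuine analytic obstacle arises.
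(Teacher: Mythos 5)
Your proposal is correct and follows essentially the same route as the paper: strip off the common unitary central and stable holonomies, identify the remaining comparison as $\|\mathbf{P}_\star - \rho^{\Hom}(\gamma)\mathbf{P}_\star\| = \|\rho_0(\gamma) - \mathbf{P}_\star\rho(\gamma)\mathbf{P}_\star^{-1}\|$ via the conjugation action of $\rho^{\Hom}$, and conclude from \eqref{equation:equality-approx} after noting that $T(\gamma) \leq \sqrt{2}\,\eps^{-1/2}$ places $\gamma$ in $\mc{A}_{\eps/2}$. The only difference is presentational: you make explicit the intermediate reduction to the endpoint $x_s(\gamma)$, which the paper performs implicitly in a single chain of equalities.
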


\begin{proof}
The stable/unstable/centre holonomies are unitary, so we have by \eqref{equation:equality-approx}:
\begin{align*}
\|p_-(x)-p_+(x)\| & = \|\mathbf{P}_\star - \mathbf{H}^s_{x_s(\gamma) \to x_\star}\mathbf{H}^c_{x_u(\gamma) \to x_s(\gamma)}\mathbf{H}^u_{x_\star \to x_u(\gamma)}\mathbf{P}_\star \| \\
& = \|\mathbf{P}_\star - \rho^{\Hom}(g)\mathbf{P}_\star\| \\
& = \|\mathbf{P}_\star - \rho_0(g)\mathbf{P}_\star \rho^{-1}(g)\|  = \|\rho_0(g)-\mathbf{P}_\star\rho(g)\mathbf{P}_\star^{-1}\| = \mc{O}(\eps^{\tau_0}).
\end{align*}
\end{proof}

We now consider $\gamma_\eps \in \mc{H}$, the homoclinic orbit constructed in Lemma \ref{lemma:separation+dense} and denote by $\mc{T}_\eps$ its trunk. The following statement is similar to \cite[Lemmas 3.22 and 3.24]{Cekic-Lefeuvre-22}.

\begin{lemma}\label{lemma:uniform-bound-p_-}
Let $\alpha := \tau_0/\beta_s < 1$. There exists a constant $C > 0$ such that: 
\[\|p_-|_{\mc{T}_\eps}\|_{C^\alpha} \leq C.\]
\end{lemma}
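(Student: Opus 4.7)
The plan is to dichotomize on the distance $d := d(x,y)$ relative to the transversal separation scale $\eps^{\beta_s}$, combining the $\eps^{\beta_s}$-transversal separation of $\mc{T}_\eps$ with the $\eps^{\tau_0}$-approximation $p_- \approx p_+$ on $\gamma_\eps$ provided by Lemma \ref{lemma:small-trunk}.

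\emph{Small scales.} In the regime $d \leq c_0 \eps^{\beta_s}$ for some small absolute constant $c_0 > 0$, I would first form the Bowen bracket $z = [x,y] \in W^u_\eps(\varphi_t x) \cap W^s_\eps(y)$; the standard estimates give $|t|, d(z, \varphi_t x), d(z, y) \leq C_1 d$, so the stable and unstable components have size $< \eps^{\beta_s}$. The $\eps^{\beta_s}$-transversal separation of $\mc{T}_\eps$ from Lemma \ref{lemma:separation+dense} then forces $y = \varphi_\tau x$ with short flow time $|\tau| \lesssim d$. Since on $\gamma_\eps$ the identity $p_-(y) = \mathbf{H}^c_{x \to y}\, p_-(x)$ holds, and $\mathbf{H}^c_{x \to y}$ is generated by $\X^{\Hom}$ with $L^\infty$ norm controlled by the uniform bounds on $b_0$ and $b$ (via Lemma \ref{lemma:uniform-bound-curvature}), I obtain $\|p_-(y) - p_-(x)\| \leq \|\mathbf{H}^c_{x \to y} - \mathrm{id}\| \lesssim |\tau| \lesssim d \leq d^\alpha$ (using $d \leq 1$ and $\alpha < 1$).

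\emph{Moderate scales.} In the complementary regime $d > c_0 \eps^{\beta_s}$, the orbit-time between $x$ and $y$ along $\gamma_\eps$ can be very large, so the direct flow-transport bound fails. Still using the Bowen bracket $z = [x,y]$, I would pass from $x$ to $y$ through the chain $x \to \varphi_t x \to z \to y$: the flow step contributes $\lesssim d$; the unstable step (natural extension of $p_-$ from $\varphi_t x$ along $W^u$ to $z$) contributes $\lesssim d^\theta$, where $\theta > 0$ is the H\"older exponent of the stable/unstable holonomies; and the stable step (natural extension of $p_+$ from $y$ along $W^s$ to $z$) also contributes $\lesssim d^\theta$. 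The switch at $z$ from the $p_-$-chain to the $p_+$-chain is done using Lemma \ref{lemma:small-trunk} applied at a single orbit point: $\|p_-(\varphi_t x) - p_+(\varphi_t x)\| \lesssim \eps^{\tau_0}$. Summing, $\|p_-(y) - p_-(x)\| \lesssim d^\theta + \eps^{\tau_0}$. Using $d \geq c_0 \eps^{\beta_s}$ gives $\eps^{\tau_0} \leq C d^{\tau_0/\beta_s} = C d^\alpha$; provided $\tau_0$ is small enough that $\alpha \leq \theta$ (which is possible by the freedom in Lemma \ref{lemma:from-trace-to-parry}), also $d^\theta \leq d^\alpha$, yielding $\|p_-(y) - p_-(x)\| \leq C d^\alpha$.

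\emph{Main obstacle.} The technical heart is the bridging step at $z \notin \gamma_\eps$: Lemma \ref{lemma:small-trunk} is only available at orbit points. To finish, I would compare the unstable extension $\hat p_-(z) := \mathbf{H}^u_{\varphi_t x \to z}\, p_-(\varphi_t x)$ with the stable extension $\hat p_+(z) := \mathbf{H}^s_{y \to z}\, p_+(y)$. Their difference decomposes into (i) the transport of $p_-(\varphi_t x) - p_+(\varphi_t x)$, bounded by $\eps^{\tau_0}$, through the unitary stable/unstable holonomies; and (ii) the Ambrose--Singer holonomy around the quadrilateral traced by flow from $\varphi_t x$ to $y$, stable from $y$ to $z$, and inverse unstable from $z$ back to $\varphi_t x$. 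The quadrilateral has two transverse sides of length $\lesssim d$ and one potentially long flow side; the flow side is tamed by the uniform curvature control from Lemma \ref{lemma:uniform-bound-curvature} combined with the exponential contraction along $W^{s/u}$, so the Ambrose--Singer integral is $\lesssim d^\theta$. This quadrilateral analysis --- adapting Ambrose--Singer with dynamical foliation contraction --- is the technically most delicate piece.
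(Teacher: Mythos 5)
Your small-scale case and the overall skeleton (Bowen bracket, transversal separation to convert $\eps^{\tau_0}\leq C\,d(x,y)^{\tau_0/\beta_s}$, switching between $p_-$ and $p_+$ via Lemma \ref{lemma:small-trunk}) match the paper's strategy, but the bridging step in your moderate-scale regime --- which you correctly identify as the technical heart --- does not work as proposed. The loop to which you want to apply Ambrose--Singer has a single long flow side (from $\varphi_t x$ to $y=\varphi_T x$, with $T$ possibly of order $\eps^{-1/2}$), closed up by two short transverse hops through $z$. Ambrose--Singer combined with exponential contraction only yields smallness for loops bounded by \emph{two} forward- (or backward-) asymptotic orbit segments together with short transversals --- this is exactly how $\mathbf{H}^{s/u}$ are constructed and compared with $\mathbf{P}$ over distances $\mc{O}(d)$; along your long flow side there is no pairing of nearby trajectories, hence no spanning surface with small curvature integral. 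In fact, by the Shadowing Lemma the $\X^{\Hom}$-holonomy around your quadrilateral is approximately $u\mapsto P_0(\gamma')\,u\,P(\gamma')^{-1}$ for a genuine closed orbit $\gamma'$ of period about $T$, which is precisely the kind of quantity the theorem is trying to control and is not small for geometric reasons. Already in the Abelian example $\E_0=\E=\M\times\C$, $\X_0=X$, $\X=X+i\pi_1^*\theta$, this holonomy equals $\exp(-i\int_{\gamma'}\pi_1^*\theta)$, a Birkhoff integral over time $\sim T$, which is $\mc{O}(1)$ at best and certainly not $\mc{O}(d^\theta)$; making it small requires the dynamical hypothesis on closed orbits and, in the non-Abelian case, the representation-theoretic estimate \eqref{equation:equality-approx}, not the curvature bound of Lemma \ref{lemma:uniform-bound-curvature}.

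The paper's proof never transports the comparison across the long orbit segment: the switch from $p_-$ to $p_+$ is made at $z$ itself, using that the orbit $\gamma$ of $z$ is again a homoclinic orbit whose trunk has length at most $T(\gamma_\eps)+\mc{O}(1)\leq\sqrt{2}\,\eps^{-1/2}$, so that Lemma \ref{lemma:small-trunk} applies directly to $\gamma$ and gives $\|p_-(z)-p_+(z)\|=\mc{O}(\eps^{\tau_0})$ (this is exactly why that lemma is stated for trunks of length $\leq\sqrt{2}\,\eps^{-1/2}$ and why the character estimate was arranged on the larger set $\mc{A}_{\eps/6}$). After this, the only remaining comparisons are between stable/unstable/central holonomies and geodesic parallel transport over distances $\mc{O}(d(x,y))$, where Ambrose--Singer does apply and gives $\mc{O}(d(x,y))$; there is then no $d^\theta$ term and no constraint $\alpha\leq\theta$. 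To repair your argument, replace your quadrilateral step by this one: extend $p_\pm$ to $z$ intrinsically along the new homoclinic orbit through $z$ and invoke Lemma \ref{lemma:small-trunk} for that orbit, instead of trying to tame the long flow side by curvature and contraction.
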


\begin{proof}
Consider $x,y \in \mc{T}_\eps$ with $d(x, y) < \delta$ where $\delta > 0$ is sufficiently small, such that $y$ is in the future of $x$, and let $z := [x,y] \in W^{cu}_{\delta}(x) \cap W^{s}_{\delta}(y)$ (see \eqref{eq:bowen-bracket} for the definition of $[\bullet, \bullet]$). Define $\tau$ such that $\varphi_\tau(z) \in W^u_{\loc}(x)$ and note that $|\tau| \leq Cd(x,y)$ for some uniform constant $C > 0$. The orbit $\gamma$ of the point $z$ is also a homoclinic orbit; if $x$ and $y$ are sufficiently close on the local central leaf, then $z = y$ and $\gamma = \gamma_{\eps}$. By construction, the length of its trunk satisfies 
\begin{equation}\label{eq:new-homoclinic}
	T(\gamma) \leq T(\gamma_\eps) + \mc{O}(1) \leq \eps^{-1/2} + \mc{O}(1) \leq \sqrt{2}\varepsilon^{-1/2},
\end{equation}
and so we may apply Lemma \ref{lemma:uniform-bound-p_-} to $\gamma$. Indeed, the trunk gets smaller by the length of the segment between $x$ and $y$ plus $\mc{O}(\delta)$; the extra $\mc{O}(1)$ is to account for the possible addition of $2T_\star$ coming from the fact that the endpoints $x_s(\gamma)$ and $x_u(\gamma)$ of the trunk of $\gamma$ need to make another loop around $\gamma_\star$ to hit the fundamental domain. We have:
\begin{align}\label{eq:estimate}
\begin{split}
& \|p_-(x) - \mathbf{P}_{y \rightarrow x}p_-(y)\| \\
& \leq \|p_-(x) - \mathbf{P}_{z \rightarrow x}p_-(z)\| + \|p_-(z) - p_+(z)\| + \|\mathbf{P}_{z \rightarrow x}p_+(z) - \mathbf{P}_{y \rightarrow x}p_+(y)\|\\ 
&\hspace{325pt} + \|p_+(y) - p_-(y)\| \\
& \leq \|p_-(x) - \mathbf{P}_{z \rightarrow x}p_-(z)\| + \|\mathbf{P}_{x \rightarrow y}\mathbf{P}_{z \rightarrow x}p_+(z) - p_+(y)\| + \mc{O}(\varepsilon^{\tau_0})\\
& \leq  \|p_-(x) - \mathbf{P}_{\varphi_{\tau}(z) \rightarrow x} p_-(\varphi_{\tau}(z))\| + \|\mathbf{P}_{\varphi_{\tau}(z) \rightarrow x} p_-(\varphi_{\tau}(z)) - \mathbf{P}_{z \rightarrow x}p_-(z)\|\\ 
&\hspace{150pt}+ \|\mathbf{P}_{x \rightarrow y} \mathbf{P}_{z \rightarrow x}p_+(z) - p_+(y)\| + \mc{O}(d(x,y)^{\tau_0/\beta_s}),
\end{split}
\end{align}
where we used the triangle inequality in the first and third inequalities, Lemma \ref{lemma:small-trunk} in the second one, and also Lemma \ref{lemma:separation+dense} the third one. More precisely, in the last estimate we used that there exists $C > 0$ uniform such that $\varepsilon^{\beta_s} \leq C d(x, y)$ (otherwise we would contradict the transversal separation of $\mc{T}_\varepsilon$). Next, the first, second, and third terms of the last line in \eqref{eq:estimate} are bounded by $\mc{O}(d(x, y))$ by the Ambrose-Singer formula (see \cite[Lemma 3.14, Item (1) and (2)]{Cekic-Lefeuvre-22}); alternatively one say that the same estimate follows directly from the proof of \cite[Lemmas 3.22 and 3.24]{Cekic-Lefeuvre-22}. This completes the proof.
\end{proof}

\subsection{Extension of the conjugacy}

The third step is to extend $p_- \in C^{\infty}(\mc{T}_\eps, \Hom(\E, \E_0))$ from $\mc{T}_\eps$ to $\M$. For that, we consider a collection of transverse sections $(\Sigma_j)_{j=1}^N$ and open neighbourhoods $(U_j)_{j = 1}^N$ covering $\M$ such that
\begin{equation}\label{eq:sigma_jdef}
	\Sigma_j := \cup_{y \in W^s_{\eta_1}(x_j)} W^u_{\eta_1}(y), \quad U_j := \cup_{-\eta_2 < t < \eta_2} \varphi_t(\Sigma_j),
\end{equation}
for some $\eta_1, \eta_2 > 0$ and $x_j \in \M$. Taking $\eta_1$ and $\eta_2$ small enough depending on the metric, we may assume that $U_j \subset B(x_j, \imath/2)$, where $\imath$ is the injectivity radius of $g$. We consider a partition of unity $\sum_{j=1}^N \chi_j = \mathbf{1}$ subordinated to this cover (i.e. $\mathrm{supp}(\chi_j) \subset U_j$). In what follows, we extend $p_-$ in three steps: first to $\Sigma_j$ for each $j$, from $\Sigma_j$ to $U_j$, and finally using the partition of unity to $\M$.

\begin{lemma}\label{lemma:p_j-holder-1}
	There exists a uniform $C > 0$ such that the following holds for $\eta_1$ small enough. There exists an extension $p_j \in C^\alpha(\Sigma_j, \Hom(\E, \E_0))$ of $p_-$, i.e. such that $p_-(y) = p_j(y)$ for any $y \in \mc{T}_\eps \cap \Sigma_j$, and we have for $j = 1, \dotsc, N$:
	\begin{align*}
		\|p_j\|_{C^\alpha(\Sigma_j, \mathrm{Hom}(\E, \E_0))} \leq C\|p_-\|_{C^\alpha(\mc{T}_\eps \cap \Sigma_j, \mathrm{Hom}(\E, \E_0))}.
	\end{align*}
\end{lemma}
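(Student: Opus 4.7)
The plan is to reduce the problem to a classical Hölder extension of vector-valued functions on a metric space via a local trivialization of $\Hom(\E, \E_0)$ over $\Sigma_j$, where the trivialization is given by parallel transport along short geodesics.

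\emph{Step 1: Local trivialization.} By the construction in \eqref{eq:sigma_jdef}, $\Sigma_j \subset U_j \subset B(x_j, \imath/2)$ where $\imath$ is the injectivity radius of the background metric, so for each $z \in \Sigma_j$ there is a unique short geodesic from $x_j$ to $z$. Parallel transport along this geodesic with respect to the natural connection on $\Hom(\E, \E_0)$ induced by the connections furnished by Lemma \ref{lemma:uniform-bound-curvature} on $\E$ and $\E_0$ yields a unitary trivialization $\Psi_j: \Sigma_j \times \Hom(\E, \E_0)(x_j) \to \Hom(\E, \E_0)|_{\Sigma_j}$. Because $\Psi_j$ is built from parallel transport and because the curvature of the underlying connections is uniformly bounded by Lemma \ref{lemma:uniform-bound-curvature} (thanks to the uniform boundedness hypothesis on $b$), both $\Psi_j$ and $\Psi_j^{-1}$ have $C^1$-norms bounded by a constant independent of $\eps$ and of $b$ within the uniform class.

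\emph{Step 2: Hölder extension in the trivialization.} If $\mc{T}_\eps \cap \Sigma_j = \emptyset$, we simply set $p_j \equiv 0$. Otherwise, form $\widetilde{p}_- := \Psi_j^{-1} \circ p_-|_{\mc{T}_\eps \cap \Sigma_j}$, a function into the fixed finite-dimensional Hermitian space $\Hom(\E, \E_0)(x_j)$. The uniform $C^1$-control of $\Psi_j$ and its inverse, combined with the comparison between the intrinsic bundle Hölder norm (defined via parallel transport) and the trivialized one, yields $\|\widetilde{p}_-\|_{C^\alpha} \leq C \|p_-\|_{C^\alpha(\mc{T}_\eps \cap \Sigma_j)}$. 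Applying the classical McShane–Whitney Hölder extension theorem component-wise in the fixed vector space $\Hom(\E, \E_0)(x_j)$, we obtain $\widetilde{p}_j: \Sigma_j \to \Hom(\E, \E_0)(x_j)$ with $\|\widetilde{p}_j\|_{C^\alpha(\Sigma_j)} \leq C\|\widetilde{p}_-\|_{C^\alpha(\mc{T}_\eps \cap \Sigma_j)}$; the sup-norm part of the Hölder bound is controlled using that the diameter of $\Sigma_j$ is at most $\mc{O}(\eta_1)$, hence bounded.

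\emph{Step 3: Conclusion and main obstacle.} Setting $p_j := \Psi_j \circ \widetilde{p}_j$ produces a section of $\Hom(\E, \E_0)|_{\Sigma_j}$ that agrees with $p_-$ on $\mc{T}_\eps \cap \Sigma_j$ by construction, and the Hölder norm bound transfers back with only a uniform multiplicative loss coming from Step 1. The only real point to verify is that the implied constants are uniform both in $\eps$ and in the cocycle $b$ within the uniformly bounded class; this is precisely what Lemma \ref{lemma:uniform-bound-curvature} guarantees by providing the required uniform curvature bound for the associated unitary connections. Modulo this uniformity check, the argument is routine.
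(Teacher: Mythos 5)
Your proposal is correct and follows essentially the same route as the paper: trivialise $\Hom(\E,\E_0)$ over $\Sigma_j$ by parallel transport to the centre $x_j$ (uniformly controlled thanks to the curvature bound of Lemma \ref{lemma:uniform-bound-curvature} / the Ambrose--Singer formula), extend the resulting coefficient functions by a McShane--Whitney-type Hölder extension, and transport back. The paper merely writes the McShane extension out explicitly (the $\min$-formula for $\re(b_{jk})$, $\im(b_{jk})$) and carries out the Ambrose--Singer comparison of parallel transports by hand, which is exactly the uniformity check you defer to in Steps 1 and 3.
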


\begin{proof}
	Fix some $1\leq j \leq N$ and recall that $\rk(\E) = \rk(\E_0) = r$ by Lemma \ref{lemma:from-trace-to-parry}. On each $\Sigma_j$, we extend $p_-$ to $p_j$ by the following procedure. Let $f_{j1}, \dotso, f_{jr^2} \in \mathrm{Hom}({\E}, {\E_0})(x_j)$ be an arbitrary orthonormal basis. Then for $y \in \mc{T}_\eps \cap U_j$, we may write
\[
	\mathbf{P}_{y \to x_j} p_{-}(y) = \sum_{k = 1}^{r^2} a_{jk}(y) \cdot f_{jk}, \quad a_{jk}(y) \in \mathbb{C}.
\]
We may estimate the H\"older norm of $a_{jk}$ as follows. First we estimate the homogeneous part of the H\"older norm; for $x \neq y \in \mc{T}_\eps \cap U_j$, we have
\[
	\sum_{k = 1}^{r^2} \frac{|a_{jk}(y) - a_{jk}(x)|^2}{d(x, y)^{2\alpha}} = \frac{\|\mathbf{P}_{y \to x_j} p_-(y) - \mathbf{P}_{x \to x_j} p_-(x)\|^2}{d(x, y)^{2\alpha}} \leq \mc{O}(d(x, y)^{2(1 - \alpha)}) + \|p_-|_{\mc{T}_\eps}\|_{C^\alpha}^2,
\] 
where in the estimate we used that $\mathbf{P}_{x_j \to x} \mathbf{P}_{y \to x_j} = \mathbf{P}_{y \to x} + \mc{O}(d(x, y))$ by the Ambrose-Singer formula, and therefore since $\|p_-|_{\mc{T}_\eps}\|_{C^0} = 1$. For the non-homogeneous part of the norm observe that $\sum_k |a_{jk}(y)|^2 = \|p_{-}(y)\|^2 = 1$, so we obtain
\begin{equation}\label{eq:a_jk-bound}
	\|a_{jk}|_{\mc{T}_\eps}\|_{C^\alpha} \leq \mc{O}(d(x, y)^{1 - \alpha}) + \|p_-|_{\mc{T}_\eps}\|_{C^\alpha} + \|p_-|_{\mc{T}_{\eps}}\|_{C^0} \leq 2\|p_{-}|_{\mc{T}_\eps}\|_{C^\alpha}, \quad \forall k = 1, \dotsc, r^2,
\end{equation}
for $\eta_1$ small enough, and for $x$ and $y$ close enough (in a uniform way), using that $\|p_-|_{\mc{T}_\eps}\|_{C^0} = 1$ to absorb the term $\mc{O}(d(x, y)^{1 - \alpha})$. Next, we define $b_{jk} \in C^\alpha(\Sigma_j)$ by setting, for $x \in \Sigma_j$:
\begin{align*}
	\re(b_{jk})(x) &:= \min_{y \in \mc{T}_\eps \cap \Sigma_j} \big(\re (a_{jk}(y)) + 2 d(x, y)^\alpha \|p_-\|_{C^\alpha(\mc{T}_\eps \cap \Sigma_j, \mathrm{Hom}(\E, \E_0))}\big),\\
	\im(b_{jk})(x) &:= \min_{y \in \mc{T}_\eps \cap \Sigma_j} \big(\im (a_{jk}(y)) + 2 d(x, y)^\alpha \|p_-\|_{C^\alpha(\mc{T}_\eps \cap \Sigma_j, \mathrm{Hom}(\E, \E_0))}\big).
\end{align*}
By construction, the functions $\re(b_{jk})$ and $\im(b_{jk})$ extend respectively $\re(a_{jk})$ and $\im(a_{jk})$ in the sense that they coincide in restriction to $\mc{T}_\eps \cap \Sigma_j$. That $b_{jk}$ are H\"older regular follows from the elementary fact that $x \mapsto d(x, y)^{\alpha}$ is H\"older regular and that
\[\|\min(q_1, q_2)\|_{C^\alpha(\M)} \leq \max(\|q_1\|_{C^\alpha(\M)}, \|q_2\|_{C^\alpha(\M)}), \quad \forall q_1, q_2 \in C^\alpha(\M, \mathbb{R}).\] 
Indeed, this is applied one less times than the cardinality of the finite set $\mc{T}_\eps \cap \Sigma_j$. From this and \eqref{eq:a_jk-bound} it also follows that 
\begin{equation}\label{eq:b_jk-bound}
	\|b_{jk}\|_{C^\alpha} \leq 8\|p_-|_{\mc{T}_\eps}\|_{C^\alpha}, \quad \forall k = 1, \dotsc, r^2.
\end{equation}
Finally, define:
\begin{equation}\label{eq:def-p_j}
	p_j(x) := \mathbf{P}_{x_j \to x} \left(\sum_{k = 1}^{r^2} b_{jk}(x) f_{jk}\right), \quad x \in \Sigma_j.
\end{equation}

It remains to give a H\"older bound on $p_j$; this follows similarly to the bound for $a_{jk}$ above. Indeed, for $x \neq y \in \Sigma_j$ we have
\[\frac{\|\mathbf{P}_{y \to x} p_j(y) - p_j(x)\|}{d(x, y)^{\alpha}} = \frac{\|\sum_{k} (b_{jk}(y) - b_{jk}(x)) f_{jk}\|}{d(x, y)^{\alpha}} + \mc{O}(d(x, y)^{1 - \alpha}) \leq 9r^{2} \|p_-|_{\mc{T}_\eps}\|_{C^\alpha},\]
for $\eta_1$ small enough, where we used that $\mathbf{P}_{x \to x_j} \mathbf{P}_{y \to x} \mathbf{P}_{x_j \to x} = \id_{\Hom(\E, \E_0)(x)} + \mc{O}(d(x, y))$ in the first equality, which follows from the Ambrose-Singer formula, and we used \eqref{eq:b_jk-bound} in the inequality. This shows the required estimate and completes the proof.
\end{proof}

The following lemma extends $p_j$ constructed in the preceding lemma to $U_j$ and further to $\M$, and concludes the discussion in this section.

\begin{lemma}\label{lemma:p_j-holder-2}
	For $\eta_2 > 0$ small enough, for each $j = 1, \dotsc, N$, there exists an extension $p_j'$ of $p_j|_{\Sigma_j}$ constructed in the previous lemma, such that for some uniform $C > 0$:
	\begin{equation}
	\label{eq:p_j-holder-2}
		\|p_j'\|_{C^\alpha(U_j, \Hom(\E, \E_0))} \leq C\|p_j\|_{C^\alpha(\Sigma_j, \mathrm{Hom}(\E,\E_0))}.
	\end{equation}
	Let us set $\widetilde{p} := \sum_{j = 1}^N \chi_j p_j' \in C^{\alpha}(\M, \Hom(\E, \E_0))$. Then $\widetilde{p}|_{\mc{T}_\eps} = p_-$ and we have
	\begin{equation}
	\label{eq:p_j-holder-3}
		\|\widetilde{p}\|_{C^\alpha} = \mc{O}(1), \quad \|\X^{\Hom} \widetilde{p}\|_{C^\alpha} = \mc{O}(1), \quad \|\X^{\Hom} \widetilde{p}\|_{C^0} \leq \mc{O}(1) \varepsilon^{\beta_d \alpha}.
	\end{equation}	
	Moreover, for $\varepsilon$ small enough we may assume that $\|\widetilde{p}\|_{C^0(\M, \Hom(\E, \E_0))} \leq 2$ and that $\widetilde{p}$ is invertible at each point. In particular, $\E$ and $\E_0$ are isomorphic as vector bundles.
\end{lemma}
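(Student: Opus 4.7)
The natural strategy is to extend $p_j$ from $\Sigma_j$ to $U_j$ using the parallel transport of $\X^{\Hom}$ along the flow: for $x = \varphi_t(z)$ with $z \in \Sigma_j$ and $|t| < \eta_2$, I would set $p_j'(x) := \mathbf{P}(z,t) p_j(z)$, where $\mathbf{P}(z,t)$ denotes the parallel transport for $\X^{\Hom}$. By construction this makes $\X^{\Hom} p_j' \equiv 0$ on $U_j$. The H\"older bound \eqref{eq:p_j-holder-2} is then an elementary consequence of the H\"older regularity of $p_j$, the smoothness of the flow, the uniform boundedness of $|t| < \eta_2$, and the smoothness of the parallel transport of $\X^{\Hom}$ (which in turn is controlled by the curvatures via Lemma \ref{lemma:uniform-bound-curvature}).

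For the identity $\widetilde{p}|_{\mc{T}_\eps} = p_-$, I would first observe that $p_-$, as defined in \eqref{eq:p_-def}, is flow-invariant along $\gamma_\eps$ since the central holonomy composes along the flow; hence after extending $p_-$ by flow invariance to a small $\eta_2$-neighbourhood $\widetilde{\mc{T}}_\eps$ of $\mc{T}_\eps$, for each $z \in \Sigma_j \cap \widetilde{\mc{T}}_\eps$ we have $p_j(z) = p_-(z)$, and by construction $p_j'(x) = \mathbf{P}(z,t) p_-(z) = p_-(x)$ whenever $x \in U_j \cap \widetilde{\mc{T}}_\eps$. Consequently, for $y \in \mc{T}_\eps$,
\[
\widetilde{p}(y) = \sum_j \chi_j(y) p_j'(y) = \sum_j \chi_j(y) p_-(y) = p_-(y).
\]

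For the estimates in \eqref{eq:p_j-holder-3}, using the Leibniz rule and $\X^{\Hom} p_j' = 0$ I get $\X^{\Hom} \widetilde{p} = \sum_j (X\chi_j) p_j'$, which is $C^\alpha$ with $\mc{O}(1)$ norm (product of smooth and uniformly $C^\alpha$ factors). For the $C^0$ bound, given $x \in \M$ I would use the $\eps^{\beta_d}$-density of $\mc{T}_\eps$ to pick $y \in \mc{T}_\eps$ with $d(x,y) \leq \eps^{\beta_d}$ (assuming $\eps$ is small enough relative to the cover so that $y \in U_j$ whenever $(X\chi_j)(x) \neq 0$), and then exploit $\sum_j (X\chi_j)(x) = 0$ together with the common value $p_j'(y) = p_-(y)$:
\[
\X^{\Hom} \widetilde{p}(x) = \sum_j (X\chi_j)(x)\bigl(p_j'(x) - p_-(y)\bigr) = \sum_j (X\chi_j)(x)\bigl(p_j'(x) - p_j'(y)\bigr),
\]
so the uniform $C^\alpha$ bound on $p_j'$ gives $\|\X^{\Hom} \widetilde{p}\|_{C^0} = \mc{O}(\eps^{\beta_d \alpha})$.

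Finally, for invertibility, the same density argument shows that for every $x \in \M$ there exists $y \in \mc{T}_\eps$ with $\|\widetilde{p}(x) - p_-(y)\| = \mc{O}(\eps^{\beta_d \alpha})$; since $p_-(y)$ is unitary, $\widetilde{p}(x)$ is $\mc{O}(\eps^{\beta_d \alpha})$-close to a unitary, which gives both $\|\widetilde{p}\|_{C^0} \leq 2$ and pointwise invertibility for $\eps$ small enough, yielding $\E \cong \E_0$ as vector bundles. The main technical delicacy I expect is controlling the boundary effect at the ends of $\mc{T}_\eps$ when setting up the flow-extension $\widetilde{\mc{T}}_\eps$ so that the identification $p_j|_{\Sigma_j \cap \widetilde{\mc{T}}_\eps} = p_-$ remains valid; this requires choosing $\eta_2$ small compared to the cover and ensuring the fundamental-domain data near $x_\star$ is compatible with the flow extension, which is a routine but careful bookkeeping.
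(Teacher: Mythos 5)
Your proposal is correct and follows essentially the same route as the paper: extension by the flow parallel transport $\mathbf{P}(z,t)p_j(z)$ on $U_j$, agreement with $p_-$ on $\mc{T}_\eps$ via the flow-equivariance of $p_-$ (central holonomy), the Leibniz identity $\X^{\Hom}\widetilde{p}=\sum_j (X\chi_j)p_j'$ combined with the $\eps^{\beta_d}$-density of the trunk for the $C^0$ smallness, and closeness to the unitary $p_-$ for the bound $\|\widetilde p\|_{C^0}\leq 2$ and invertibility. The only cosmetic point is that differences such as $p_j'(x)-p_j'(y)$ live in different fibres and should be compared through a parallel transport $\mathbf{P}_{y\to x}$ (as is done elsewhere in the paper), which does not affect the estimate.
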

\begin{proof}
	We extend $p_j$ to $U_j$ by setting  
	\[
		p_j'(\varphi_t x) := \mathbf{P}(x, t) p_j(x), \quad x \in \Sigma_j, \quad |t| \leq \eta_2.
	\] 
	Using that $d(\varphi_{t_1}x, \varphi_{t_2}y) \sim d(x, y) + |t_1 - t_2|$ for $\eta_2$ small enough, $|t_i| \leq \eta_2$ for $i = 1, 2$, and $x, y \in \Sigma_j$, and using also the Ambrose-Singer formula (similarly as in the proof of Lemma \ref{lemma:p_j-holder-1}) we obtain the H\"older estimate \eqref{eq:p_j-holder-2}.
	
	Next, using \eqref{eq:p_j-holder-2} and the fact that $C^\alpha$ is an algebra, we directly get the estimate \eqref{eq:p_j-holder-3}. Since $p_j$ agrees with $p_-$ on $\mc{T}_{\eps} \cap U_j$, we have that $p_j'$ agrees with $p_-$ on $\mc{T}_\eps$ by construction. Therefore $\widetilde{p}|_{\mc{T}_\eps} = p_-$. To see the uniform derivative we recall that $\X^{\Hom} p_j' = 0$ by definition and hence
	\[\X^{\Hom} \widetilde{p} = \sum_{j = 1}^N X\chi_j \cdot p_j',\]
	and so the second estimate of \eqref{eq:p_j-holder-3} follows from \eqref{eq:p_j-holder-2}. For the final estimate of \eqref{eq:p_j-holder-3}, observe that $\X^{\Hom} \widetilde{p} = 0$ on $\mc{T}_\eps$ since $\widetilde{p} = p_-$ there. The estimate follows from the facts that the H\"older norm of $\X^{\Hom} \widetilde{p}$ is uniformly bounded and that $\mc{T}_\eps$ is $\varepsilon^{\beta_d}$-dense.
	
	Finally, a straightforward consequence of \eqref{eq:p_j-holder-3} and $\varepsilon^{\beta_d}$-density of $\mc{T}_\eps$ is that, since $p_-$ is unitary on $\mc{T}_\eps$, we may assume $\|\widetilde{p}\|_{C^0(\M, \mathrm{Hom}(\E, \E_0))} \leq 2$ for $\varepsilon$ small enough, and also that we may assume $\widetilde{p}$ is invertible at every point of $\M$. This completes the proof.
	\end{proof}
	
\subsection{Unitarisation of the conjugacy}

Let $\widetilde{p} \in C^\alpha(\M, \Hom(\E, \E_0))$ be the conjugacy constructed in Lemma \ref{lemma:p_j-holder-2}; it is invertible, and satisfies
\begin{align}
\begin{split}\label{eq:p-tilde-corollary}
	&\|\widetilde{p}\|_{C^\alpha} \leq \mc{O}(1), \quad \|\widetilde{p}\|_{C^0} \leq 2, \quad \widetilde{p}^* \widetilde{p}|_{\mc{T}_\eps} = \id_{\E},\\ 
	&\|\X^{\Hom} \widetilde{p}\|_{C^\alpha} = \mc{O}(1), \quad \|\X^{\Hom} \widetilde{p}\|_{C^0} \leq \mc{O}(1) \varepsilon^{\beta_d \alpha}.
\end{split}
\end{align}
Let us split $\widetilde{p}$ into unitary and radial parts:
\[
	\widetilde{p} = \underbrace{\widetilde{p} (\widetilde{p}^* \widetilde{p})^{-1/2}}_{p := } \underbrace{(\widetilde{p}^* \widetilde{p})^{1/2}}_{R :=},
\]
where for each $x \in \M$, $p(x)$ and $R(x)$ belong to $\operatorname{U}(\E, \E_0)(x)$ and $\End(\E)(x)$, respectively. 

\begin{lemma}\label{lemma:unitarisation}
	There exists $C > 0$ depending only on $b_0$ such that
	\begin{equation}\label{eq:p-holder-final}
		\|p\|_{C^{\alpha/4}(\M, \Hom(\E, \E_0))} \leq C, \quad \|\X^{\Hom} p\|_{C^{\alpha/4}(\M, \Hom(\E, \E_0))} \leq C \varepsilon^{\frac{\beta_d \alpha}{4}}.
	\end{equation}
\end{lemma}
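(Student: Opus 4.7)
The key idea is that $q := \widetilde{p}^*\widetilde{p}$ is uniformly close to $\id_{\E}$ and lies in a region where the functional calculus map $F(A) := A^{-1/2}$ is smooth, so the regularity of $\widetilde{p}$ and of $\X^{\Hom}\widetilde{p}$ transfers directly to $p = \widetilde{p}\, F(q)$.

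First, since $\widetilde{p} \in C^\alpha$ is uniformly bounded by \eqref{eq:p-tilde-corollary}, $q \in C^\alpha(\M, \End(\E))$ is bounded by the algebra property; equality $q = \id_{\E}$ on the $\eps^{\beta_d}$-dense trunk $\mc{T}_\eps$ together with H\"older continuity gives $\|q - \id_{\E}\|_{C^0} = \mc{O}(\eps^{\beta_d\alpha})$, and classical interpolation between $C^0$ and $C^\alpha$ yields $\|q - \id_{\E}\|_{C^{\alpha/2}} = \mc{O}(\eps^{\beta_d \alpha/2})$. For $\eps$ small enough, $q$ stays in a fixed compact neighbourhood of $\id_{\E}$ on which $F$ is smooth, so the standard H\"older composition estimate gives $\|F(q)\|_{C^\alpha} = \mc{O}(1)$. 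The first bound of \eqref{eq:p-holder-final} then follows from $p = \widetilde{p}\, F(q)$ by the algebra property of $C^{\alpha/4}$.

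For the derivative I would apply the Leibniz rule
\[
\X^{\Hom} p = (\X^{\Hom}\widetilde{p})\,F(q) + \widetilde{p}\,\X^{\End}(F(q)).
\]
The first summand is handled by interpolating the bounds $\|\X^{\Hom}\widetilde{p}\|_{C^0} = \mc{O}(\eps^{\beta_d\alpha})$ and $\|\X^{\Hom}\widetilde{p}\|_{C^\alpha} = \mc{O}(1)$, which yields $\|\X^{\Hom}\widetilde{p}\|_{C^{\alpha/4}} = \mc{O}(\eps^{3\beta_d\alpha/4}) = \mc{O}(\eps^{\beta_d\alpha/4})$. For the second summand one first derives $\X^{\End}q = (\X^{\Hom}\widetilde{p})^*\widetilde{p} + \widetilde{p}^*\X^{\Hom}\widetilde{p}$ from the Leibniz rule for the extensions (using unitarity of $\X,\X_0$ to commute adjoints with the derivations), obtaining the same kind of bounds as on $\X^{\Hom}\widetilde{p}$. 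The chain rule for the functional calculus can then be extracted from the resolvent representation $F(A) = \pi^{-1}\int_0^\infty t^{-1/2}(A+t)^{-1}\, dt$ and the Leibniz identity $\X^{\End}((q+t)^{-1}) = -(q+t)^{-1}\X^{\End}(q)(q+t)^{-1}$, giving
\[
\X^{\End}(F(q)) = -\tfrac{1}{\pi}\int_0^\infty t^{-1/2}(q+t)^{-1}\,\X^{\End}(q)\,(q+t)^{-1}\, dt;
\]
since $q$ stays close to $\id_\E$, the factor $(q+t)^{-1}$ decays like $(1+t)^{-1}$ in every relevant norm, so the integral converges and transfers the $C^0$ and $C^\alpha$ bounds of $\X^{\End}q$ to $\X^{\End}F(q)$. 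A final interpolation gives $\|\X^{\End}F(q)\|_{C^{\alpha/4}} = \mc{O}(\eps^{3\beta_d\alpha/4})$, completing the derivative estimate.

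The most delicate point I foresee is the rigorous justification of the chain rule when $q$ has only H\"older regularity; the resolvent representation above sidesteps any need for classical differentiation of $F$ and keeps all the estimates at the level of Leibniz-type manipulations on H\"older sections, which is where I would expect the bookkeeping to be most careful.
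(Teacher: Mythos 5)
Your argument is correct and follows the same overall skeleton as the paper (polar-type splitting of $\widetilde p$, smallness of the radial part from $\eps^{\beta_d}$-density of $\mc{T}_\eps$ plus interpolation, Leibniz rule for the flow derivative), but the treatment of the square-root factor is genuinely different. The paper works with $R = (\widetilde p^*\widetilde p)^{1/2}$, invokes the $\tfrac12$-H\"older continuity of the matrix square root (hence only $C^{\alpha/2}$ control of $R$), inverts $R$ by a Neumann series in $C^{\alpha/4}$, and estimates $\X^{\End}R$ by differentiating the implicit relation $R^2 = \widetilde p^*\widetilde p$ and absorbing the terms $(R-\id_{\E})\X^{\End}R$; you instead apply the functional calculus $F(q)=q^{-1/2}$ directly to $q=\widetilde p^*\widetilde p$, which is legitimate because $\|q-\id_{\E}\|_{C^0}=\mc{O}(\eps^{\beta_d\alpha})$ keeps $q$ in a fixed neighbourhood of $\id_{\E}$ where $F$ is smooth, and you differentiate along the flow through the resolvent representation $F(q)=\pi^{-1}\int_0^\infty t^{-1/2}(q+t)^{-1}\,dt$ together with $\X^{\End}\big((q+t)^{-1}\big) = -(q+t)^{-1}\X^{\End}(q)(q+t)^{-1}$. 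This buys you full $C^\alpha$ (rather than $C^{\alpha/2}$) control of the radial factor, removes the absorption step, and yields the slightly stronger exponent $\eps^{3\beta_d\alpha/4}$, at the price of justifying the integral chain rule, which your resolvent formula does cleanly since $q$ is flow-differentiable with $\X^{\End}q=(\X^{\Hom}\widetilde p)^*\widetilde p+\widetilde p^*\X^{\Hom}\widetilde p$ in $C^{\alpha/4}$. The only point you gloss over, which the paper makes explicit for $R$, is that H\"older norms of sections of $\End(\E)$ require comparing different fibres via local orthonormal frames or parallel transport and the Ambrose--Singer formula; this is routine and does not affect the validity of your argument, but it should be said when estimating $\|F(q)\|_{C^\alpha}$ and the integrand norms.
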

\begin{proof}
	Let us first study the radial part $R$ of $\widetilde{p}$. Since the square root is $1/2$-H\"older continuous on the space of positive self-adjoint matrices with a uniform H\"older constant (this follows by elementary arguments), by an argument involving a local orthonormal frame for $\E$ and the Ambrose-Singer formula we get that $R \in C^{\alpha/2}(\M, \End(\E))$ with the estimate
	%it's actually tricky! see https://math.stackexchange.com/questions/1934184/is-the-matrix-square-root-uniformly-continuous
	\begin{equation}\label{eq:r-holder}
		\|R\|_{C^{\alpha/2}} \leq \mc{O}(1) \|\widetilde{p}^* \widetilde{p}\|_{C^{\alpha}}^{\frac{1}{2}} \leq \mc{O}(1) \|\widetilde{p}\|_{C^\alpha} = \mc{O}(1),
	\end{equation}
	where in the second inequality we used the fact that $C^{\alpha}(\M, \End(\E))$ is an algebra (with continuous multiplication), and in the last estimate we used \eqref{eq:p-tilde-corollary}. Next, we observe that $R = \id_{\E}$ on $\mc{T}_\eps$ by \eqref{eq:p-tilde-corollary}. Hence, using that $\mc{T}_\eps$ is $\varepsilon^{\beta_d}$-dense and the uniform H\"older bound \eqref{eq:r-holder}, we obtain
	\[\|R - \id_{\E}\|_{C^0} \leq \mc{O}(1) \varepsilon^{\beta_d \alpha}.\]
	Interpolating between $C^0$ and $C^{\alpha/2}$, and using the two previous estimates we get
	\begin{equation}\label{eq:r-interpolation-bound}
		\|R - \id_{\E}\|_{C^{\alpha/4}} \leq \mc{O}(1) \eps^{\frac{\beta_d \alpha}{4}}.
	\end{equation}
	By the continuity of multiplication in $C^{\alpha/4}$, there is a constant $D > 0$ such that
	\begin{align}\label{eq:multiplication-algebra}
	\begin{split}
		&\|Q_1 Q_2 \|_{C^{\alpha/4}(\M, \End(\E))} \leq D \|Q_1\|_{C^{\alpha/4}(\M, \End(\E))} \|Q_2\|_{C^{\alpha/4}(\M, \End(\E))},\\ 
		&\hspace{250pt} \forall Q_1, Q_2 \in C^{\alpha/4}(\M, \End(\E)),
	\end{split}
	\end{align}
	and hence using Neumann series, i.e. that $R^{-1} = (\id_{\E} - (\id_{\E} - R))^{-1}$ we get that
	\begin{equation}\label{eq:r-1-holder}
		\|R^{-1}\|_{C^{\alpha/4}} \leq \sum_{i = 0}^\infty D^i \|\id_{\E} - R\|^i_{C^{\alpha/4}} \leq 2,
	\end{equation}
	for $\varepsilon > 0$ small enough, where in the last estimate we used \eqref{eq:r-interpolation-bound}. In particular from \eqref{eq:r-1-holder} it follows that
	\begin{equation*}
		\|p\|_{C^{\alpha/4}} = \|\widetilde{p}R^{-1}\|_{C^\alpha/4} \leq D \|\widetilde{p}\|_{C^{\alpha/4}} \|R^{-1}\|_{C^{\alpha/4}} = \mc{O}(1),
	\end{equation*}
	where in the first inequality we used \eqref{eq:multiplication-algebra}, and in the second one we used \eqref{eq:p-tilde-corollary}. This shows the first estimate in \eqref{eq:p-holder-final}.
	
	Next, we estimate $\X^{\Hom}p$, and to this end we may write (using the Leibniz rule)
	\begin{equation}\label{eq:X-hom-p}
		\X^{\Hom}p = (\X^{\Hom} \widetilde{p}) R^{-1} - \widetilde{p} R^{-1}(\X^{\End}r) R^{-1},
	\end{equation}
	where $\X^{\End}$ is the operator induced by $\X$ on the vector bundle $\End(\E) = \E \otimes \E^*$. Interpolating between the last two estimates of \eqref{eq:p-tilde-corollary} we get that
	\begin{equation}\label{eq:p-tilde-interpolation}
		\|\X \widetilde{p}\|_{C^{\alpha/4}} \leq \mc{O}(1) \varepsilon^{\frac{\beta_d \alpha}{4}}.
	\end{equation}
	Estimating the right hand side of \eqref{eq:X-hom-p} using \eqref{eq:r-1-holder}, \eqref{eq:p-tilde-corollary}, \eqref{eq:multiplication-algebra}, and \eqref{eq:p-tilde-interpolation}, to show the second estimate in \eqref{eq:p-holder-final}, it suffices to show
	\begin{equation}\label{eq:suffices}
		\|\X^{\End}R\|_{C^{\alpha/4}} \leq \mc{O}(1) \varepsilon^{\frac{\beta_d \alpha}{4}}.
	\end{equation}
	To see this, we derive the identity $R^2 = \widetilde{p}^*\widetilde{p}$ to get:
	\begin{align*}
		2\X^{\End} R &= (\X^{\Hom} \widetilde{p})\widetilde{p}^* + \widetilde{p} (\X^{\Hom} \widetilde{p})^* - (R - \id_{\E}) \X^{\End} R - \X^{\End} R ( R - \id_{\E})\\ 
		&= \mc{O}_{C^{\alpha/4}}(\varepsilon^{\frac{\beta_d \alpha}{4}}(1 + \|X^{\End}r\|_{C^{\alpha/4}})),
	\end{align*}
	where in the last estimate we used \eqref{eq:p-tilde-interpolation}, \eqref{eq:p-tilde-corollary}, and \eqref{eq:r-interpolation-bound}. The required estimate \eqref{eq:suffices} follows by taking $\varepsilon$ small enough and absorbing the second part of the right hand side to the left hand side. This completes the proof.
\end{proof}

We are in shape to prove Theorem \ref{theorem:approximate-livsic}.

\begin{proof}[Proof of Theorem \ref{theorem:approximate-livsic}]
	The result readily follows from Lemma \ref{lemma:from-trace-to-parry} (and \eqref{equation:equality-approx}) and Lemma \ref{lemma:unitarisation}, where the latter lemma also relies on definition \eqref{eq:p_-def} and Lemmas \ref{lemma:small-trunk}, \ref{lemma:uniform-bound-p_-}, \ref{lemma:p_j-holder-1}, and \ref{lemma:p_j-holder-2}. The fact that $p$ can be chosen to be smooth follows from a standard approximation result (see \cite[Lemma E.45]{Dyatlov-Zworski-19}, where the Sobolev space scale is replaced by the H\"older-Zygmund scale).
\end{proof}
%%%
%%%
%%%

\section{Stability estimates}\label{sec:main-proof}

We now prove the stability estimates stated in the introduction of the paper.

\subsection{Stability estimates for Hermitian line bundles}

We first deal with the case of line bundles. We consider an Anosov manifold $(M,g)$ and $(\mc{L}, h) \rightarrow M$, a Hermitian line bundle equipped with two unitary connections $\nabla^{\mc{L}}_{1}$ and $\nabla^{\mc{L}}_{2}$ such that $\|\nabla^{\mc{L}}_{1} - \nabla^{\mc{L}}_{2}\|_{C^N_*} \leq K$ for some $K > 0$.

\begin{proof}[Proof of Theorem \ref{theorem:stability-line-bundles-new}.]

Define $\theta := i^{-1}(\nabla^{\mc{L}}_1 - \nabla^{\mc{L}}_2) \in C^N_*(\M, \Omega^1)$, where $\theta$ is a real-valued $1$-form. Consider the unitary cocycle on $\M \times (\mathbb{C}, h_{\mathrm{std}})$ defined by
	\[
		C((x, v), t) := \exp \left( i\int_0^t \theta(\dot{\gamma}(t))\, \dd t  \right),\quad (x, v) \in \M,\quad t \in \mathbb{R}.
	\]
	where $\gamma$ is the unit speed geodesic determined by $(x, v)$. This cocycle is induced by the first order differential operator $X + i\pi_1^*\theta$, where $\pi_1^*: C^\infty(M, \Omega^1) \to C^\infty(\M)$ is defined by $\pi_1^*\eta(x, v) := \eta(x)(v)$. By definition, $X + i\pi_1^*\theta$ is uniformly bounded by $(K, 1)$. Let 
	\[\eps := \sup_{\gamma \in \mc{G}^\sharp} \frac{1}{\ell(\gamma)} |\mathbf{W}(\nabla_1^{\mc{L}}, \gamma) - \mathbf{W}(\nabla_1^{\mc{L}}, \gamma)| = \sup_{\gamma \in \mc{G}^\sharp} \frac{1}{\ell(\gamma)} |\mathbf{W}(d + i\theta, \gamma) - 1|.\] 
	 We apply Theorem \ref{theorem:approximate-livsic} to the triple $(\M \times \mathbb{C}, h_{\mathrm{std}}, X + i\pi_1^*\theta)$ to obtain some $u \in C^\infty(\M, \Ss^1)$ such that 	
	 \begin{equation}\label{eq:livshic}
		i \pi_1^*\theta = u^{-1} Xu + iR,
	\end{equation}
	where $R \in C^N_*(\M)$ is real-valued and $\|R\|_{C^\alpha} = \mathcal{O}(\varepsilon^\tau)$, for some $\alpha, \tau > 0$. Note that here we use the canonical unitary isomorphism of $\End(\mc{L})$ with $\mathbb{C}$ (using the identity section), and that the endomorphism connection induced by $\nabla_1^{\mc{L}}$ is the trivial one in this identification (as can be seen since the identity section is parallel).
	\medskip 
	
Define a smooth closed $1$-form on $\M$ by $\varphi := \frac{du}{i u}$. Since $\pi^*: H^1(M) \to H^1(\M)$ is an isomorphism on de Rham cohomology (as follows from the Gysin sequence and since $M$ is not the $2$-torus, see e.g. \cite[Corollaries 8.10 and 9.5]{Merry-Paternain-11}), there is a harmonic $1$-form $w$ and $f \in C^\infty(\M)$ such that $\varphi = \pi^*w + df$. Applying $\iota_X$ and using \eqref{eq:livshic} we obtain
	\begin{equation}\label{eq:transport}
		Xf = \pi_1^*(\theta - w) - R.
	\end{equation}
	In the case where $R \equiv 0$, we would get that $I_1(\theta - w) = 0$ is in the kernel of the $X$-ray transform, defined by
	\[
		I_1 \eta = \left(\frac{1}{\ell(\Gamma_1)} \int_{\Gamma_1} \eta,\,\, \frac{1}{\ell(\Gamma_2)} \int_{\Gamma_2} \eta,\dotsc \right), \quad \eta \in C^\infty(M, \Omega^1),
	\] 
	where $\Gamma_1, \Gamma_2, \dotsc$ is an enumeration of closed geodesics in $(M, g)$. From this it is possible to show that $\theta - w$ is exact, so $\theta$ is closed and $[\theta/2\pi] \in H^1(M,\mathbb{Z})$, which allows to conclude that $\theta = dg/(ig)$ for some $g : M \rightarrow \mathbb{S}^1$, i.e. $\nabla_1^{\mc{L}}$ and $\nabla_2^{\mc{L}}$ are gauge equivalent through $f$. 
	
	In the present case, from \eqref{eq:transport} we have $I_1(\theta - w) = \mathcal{O}_{\ell^{\infty}}(\varepsilon^\tau)$. By Hodge decomposition, write 
	\[
		\theta - w = da + b, \quad  b \in \ker \left(d^*|_{C^N_*(\M, \Omega^1)}\right), \quad a \in C^{N + 1}_*(\M),
	\]
	where $d^*$ is the adjoint of $d$. Moreover, one has $b = \pi_{\ker (d^*)}(\theta - w)$ and $da = \pi_{\ran(d)}(\theta - w)$, where $\pi_{\ker (d^*)}$ and $\pi_{\ran(d)}$ are pseudodifferential operators of order $0$ (see e.g. \cite[eq. (2.13)]{Cekic-Lefeuvre-22}), so we get
	\begin{align*}
		\|b\|_{C^\alpha} \leq \mc{O}(1)\|\theta - w\|_{C^\alpha} \leq \mc{O}(1) (1 + \|w\|_{C^\alpha})
	\end{align*}
	as well as 
	\begin{equation}\label{eq:ab-bounded}
				\|b\|_{C^N_*} \leq \mc{O}(1) (1 + \|w\|_{C^N_*}), \quad \|a\|_{C^{N+1}_*} \leq \mc{O}(1) \|da\|_{C^N_*} \leq \mc{O}(1) (1 + \|w\|_{C^N_*}),
	\end{equation}
	where in the second estimate we also used that $d$ is an elliptic operator (when acting on functions). We claim that $w = \mc{O}_{C^\infty}(1)$ is bounded, which would show 
	\begin{equation}\label{eq:b-uniform}
		\|b\|_{C^\alpha} = \mc{O}(1).
	\end{equation} 
	Since the space of harmonic $1$-forms $\mathscr{H}^1(M)$ is finite dimensional, the injectivity of $I_1$ on $\ker (d^*)$ for Anosov manifolds (see \cite{Dairbekov-Sharafutdinov-03}) implies for each $L > 0$ there is a $C = C(M, g, L) > 0$ such that
	\begin{equation}\label{eq:x-ray-stable}
		\|h\|_{C^L_*} \leq C \|I_1h\|_{\ell^\infty}, \quad h \in \mathscr{H}^1(M).
	\end{equation}
	Now \eqref{eq:transport} implies $I_1w = \mc{O}_{\ell^\infty}(1)$ (using $\|\theta\|_{C^N_*} = \mc{O}(1)$ and $\|R\|_{C^\alpha} = \mc{O}(\eps^\tau)$), which by \eqref{eq:x-ray-stable} implies $w = \mc{O}_{C^\infty}(1)$ and proves the claim.

	Moreover, we have $I_1(da) = 0$ so $I_1(b) = I_1(\theta - w) = \mathcal{O}_{\ell^\infty}(\varepsilon^\tau)$. Hence, using the stability estimate for $I_1$ proved in \cite[Theorem 1.5]{Gouezel-Lefeuvre-19}, there is a $p = p(\alpha) \in (0, 1)$ such that:
	\begin{equation}\label{eq:b-small}
		\|b\|_{C^{\alpha/2}} \leq \mc{O}(1) \|I_1(b)\|^{p}_{\ell^\infty} \|b\|^{1 - p}_{C^{\alpha}} = \mc{O}(\varepsilon^{p \tau}),
	\end{equation}
	where in the last estimate we used \eqref{eq:b-uniform}. This implies that 
	\begin{equation}\label{eq:derived-estimate-end-result}
		\theta - w = da + \mathcal{O}_{C^{\alpha/2}}(\varepsilon^{p \tau}).
	\end{equation}

Write $\eta := w + da$. Then we have, for every closed geodesic $\gamma \subset M$:
	\begin{equation}\label{eq:smallholonomy}
		\int_\gamma \eta = \mc{O}(\eps^{p\tau})\ell(\gamma) + \int_\gamma \theta = \mc{O}(\eps^{p\tau})\ell(\gamma) + \int_{(\gamma, \dot{\gamma})} \frac{Xu}{iu}(\gamma, \dot{\gamma}) \in 2\pi \mathbb{Z} + \mc{O}(\eps^{p\tau})\ell(\gamma),
	\end{equation}
	where in the first equality we used \eqref{eq:derived-estimate-end-result}, in the second we used \eqref{eq:livshic} as well as $R = \mc{O}_{C^\alpha}(\eps^\tau)$, and finally we used that the integral of the logarithmic derivative $u^{-1} Xu$ belongs to $2\pi i \mathbb{Z}$.
	
Fix a basis of geodesic loops $(\gamma_i)_{i = 1}^{b_1(M)} \subset H_1(M; \mathbb{R})$ and a basis of real-valued harmonic $1$-forms $(h_j)_{j = 1}^{b_1(M)} \subset \mathscr{H}^1(M)$ such that 
\[\int_{\gamma_i} h_j = \delta_{ij},\quad i, j = 1, \dotsc, b_1(M),\]
where $\delta_{ij}$ denotes the Kronecker delta. By \eqref{eq:smallholonomy}, we have 
\[\int_{\gamma_i} \eta = 2 k_i \pi + r_i,\quad k_i \in \Z, \quad r_i = \mc{O}(\eps^{p \tau}), \quad i = 1, \dotsc, b_1(M).\] 
Define $\eta_r := \sum_{j=1}^{b_1(M)} r_j h_j = \mc{O}_{C^\infty}(\eps^{p\tau})$ so that 
	\begin{equation}\label{eq:eta-minus-remainder}
		\int_{\gamma_i} (\eta - \eta_r) = 2 k_i \pi, \quad i = 1, \dotsc, b_1(M).
	\end{equation} 
Fix a basepoint $x_0 \in M$ and define the gauge
\[G(x) := \exp \left(i \int_{\rho_x} (\eta - \eta_r) \right) \in \mathbb{S}^1,\quad x \in M,\] 
where $\rho_x$ is any path starting at $x_0$ and joining $x$; that $G$ is well-defined independently of the choice of $\rho_x$ follows from \eqref{eq:eta-minus-remainder}. Interpolating between \eqref{eq:b-small} and \eqref{eq:ab-bounded}, we conclude that $\|b\|_{C^{N - \eta}_*} = \mc{O}(\varepsilon^{\widetilde{\tau}})$ for some $\widetilde{\tau} \in (0, p\tau)$. Therefore, using also $G^{-1}dG = i(\eta - \eta_r)$ and $\eta_r = \mc{O}_{C^\infty}(\varepsilon^{p \tau})$, we see that
	\begin{equation}\label{eq:gain}
		\nabla_2^{\mc{L}} - \nabla_1^{\mc{L}} = i \theta = i\eta + ib = G^{-1}dG + (i \eta_r + ib) = G^{-1}dG + \mc{O}_{C^{N - \eta}_*}(\eps^{\widetilde{\tau}}).
	\end{equation}
	We note that the $\tau > 0$ claimed to exist in the statement of the theorem is actually $\widetilde{\tau}$ (which is strictly smaller than the $\tau$ from the beginning of the proof), completing the proof.
\end{proof}

\subsection{Stability estimates for higher rank vector bundles}

We now prove Theorem \ref{theorem:main}. We fix $\nabla^{\mc{E}} \in \mathcal{A}_{\E}^N$, a unitary connection on $\E$ satisfying the assumptions: \textbf{(A)} the connection $\nabla^{\E}$ is \emph{opaque}, namely $\ker (\pi^*\nabla^{\End(\E)}_X) = \C \id_{\E}$, and \textbf{(B)} the generalized X-ray transform on twisted $1$-forms with values in $\End(\E)$ is solenoidally injective. Note that the condition \textbf{(A)} is equivalent to the irreducibility of Parry's representation (see \cite[Proposition 3.5]{Cekic-Lefeuvre-22}) induced by the unitary cocycle $(\pi^*\E, \pi^*h, (\pi^*\nabla^{\E})_X)$, where $\pi^*h$ is the  pullback Hermitian metric on $\pi^*\E$.

\begin{proof}[Proof of Theorem \ref{theorem:main}]
Let 
\[\varepsilon := \sup_{\gamma \in \mc{G}^\sharp} \ell(\gamma)^{-1} \left|\W(a_1, \gamma)-\W(a_2, \gamma)\right|\]
and note that in what follows $\eps$ can be taken arbitrarily small, i.e. of the size $\mc{O}(\delta^{\frac{1}{\tau}})$, at the cost of also taking small $\delta$; in other words, it suffices to prove the stability estimate for $\eps$ small enough.

We will apply the approximate non-Abelian Liv\v{s}ic Theorem \ref{theorem:approximate-livsic}. By definition, the assumption that we work in an $\delta$-neighbourhood of $a_0$ guarantees that we may pick representatives $\nabla^{\E} + A_i$ of $a_i$ such that $\|A_i\|_{C_*^N} < \delta$ for $i = 1 , 2$. By continuity (see \cite[Section 5]{Cekic-Lefeuvre-22a}), we may then assume that $\nabla^{\E} + A_i$ are opaque for $i = 1, 2$, if $\delta > 0$ is taken small enough (depending only on $\nabla^{\E}$). Moreover, by taking $N \geq 1$ we may assume that $(\pi^*\E, \pi^*h, \X + A_i(X))$ for $i = 1, 2$ are uniformly bounded by $(K, \rk(\E))$, for some $K > 0$ and for $\delta > 0$ small enough (depending only on $(\E, h, \nabla^{\E})$). Therefore we get $p_{A_1, A_2} \in C^\infty(\M, \mathrm{U}(\E))$ such that:
\begin{equation}\label{eq:corollary-approx-livsic}
	\|\X_{A_1, A_2} p_{A_1, A_2}\|_{C^\alpha} = \mc{O}(\eps^\tau), \quad \|p_{A_1, A_2}\|_{C^\alpha} = \mc{O}(1),
\end{equation}
where $\X_{A_1, A_2} = \X^{\Hom}$ is the first order differential operator on $\Hom(\E, \E) = \End(\E)$ induced by $\X_1 := (\pi^*\nabla^{\E})_X + A_1(X)$ and $\X_2 := (\pi^*\nabla^{\E})_X + A_2(X)$. More precisely, we have
\[\X_{A_1, A_2}(\bullet) = \X^{\End}(\bullet) + A_2(X) \bullet - \bullet A_1(X),\]
where we write $\X^{\End} := (\pi^*\nabla^{\End})_X$. Here $A_i(X) (x, v) := \pi^*A_i(X)(x, v) = A_i(x)(v)$, for $i = 1, 2$.

We fix the exponent $s > 0$ of the anisotropic Sobolev space $\mc{H}^s_\pm$ such that $\alpha > 2s$. Let $\gamma$ be a small oriented contour around zero, such that it encloses no other resonances of $\X^{\End}$ except the zero resonance. By Lemmas \ref{lemma:perturbed-resolvent} and \ref{lemma:projector-v}, the spectral projector 
\[\Pi_{A_1, A_2}^+ := \frac{1}{2\pi i} \oint_\gamma (z + \X_{A_1, A_2})^{-1}\, dz\]
is well-defined for $\delta > 0$ small enough, and depends smoothly in $\mc{L}(\mc{H}^s_+, \mc{H}^s_+)$ on $(A_1, A_2)$ in the $(C^\alpha)^2$ norm. Moreover, it is the spectral projector to the unique resonance $\lambda_{A_1, A_2} \leq 0$ of $\X_{A_1, A_2}$ enclosed by $\gamma$ (here $\lambda_{A_1, A_2} \leq 0$ follows from \cite[Lemma 4.2]{Cekic-Lefeuvre-22}). 

For $N \gg 1$ we can consider the map 
\[\left[C_*^N(M,T^*M \otimes \mathrm{End}_{\mathrm{sk}}(\E))\right]^2 \ni (A_1, A_2) \mapsto p_{A_1, A_2} \in C^\alpha(\M, \operatorname{U}(\E)).\] 
It is not clear that this map is continuous with respect to $(A_1, A_2)$, as the constructions in Section \ref{sec:approximate-livsic} are quite indirect. Nevertheless, the following claim holds:

\begin{lemma}
\label{lemma:control}
There exist $C > 1$ and $\delta > 0$ (depending only on $(\E, h, \nabla^{\E})$) such that if $\|A\|_{C^N} < \delta$, then:
\begin{align}
\label{equation:control-p}
\frac{1}{C} &\leq \|p_{A_1, A_2}\|_{\mc{H}_+^s} \leq C, \\
\label{equation:control-pi-p}
\frac{1}{C} &\leq \|\Pi_{A_1, A_2}^+ p_{A_1, A_2}\|_{\mc{H}_+^s} \leq C.
\end{align}
\end{lemma}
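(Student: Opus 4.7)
I will treat the two upper bounds, the lower bound in \eqref{equation:control-p}, and the lower bound in \eqref{equation:control-pi-p} separately; the last is the main obstacle, since the map $(A_1, A_2) \mapsto p_{A_1, A_2}$ produced by Theorem \ref{theorem:approximate-livsic} is not known to depend continuously on its arguments, precluding naive perturbation arguments.

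\emph{Upper bounds.} Since the exponent $s$ of the anisotropic Sobolev space is chosen so that $\alpha > 2s > s$, one has continuous embeddings $C^\alpha(\M, \End(\E)) \hookrightarrow H^s(\M, \End(\E)) \hookrightarrow \mc{H}^s_+$ on the compact manifold $\M$ (the second inclusion holds because the order function satisfies $m \leq 1$, so $\Op(e^{-sG})$ is of order at most $-s$ near $E_s^*$ and at most $s$ near $E_u^*$, and $H^s \subset H^{-s}$). Combined with the bound $\|p_{A_1, A_2}\|_{C^\alpha} = \mc{O}(1)$ from \eqref{eq:corollary-approx-livsic}, this gives the upper bound in \eqref{equation:control-p}, while the uniform operator bound $\|\Pi_{A_1, A_2}^+\|_{\mc{H}^s_+ \to \mc{H}^s_+} = \mc{O}(1)$ from Lemma \ref{lemma:projector-v} then delivers the upper bound in \eqref{equation:control-pi-p}.

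\emph{Lower bound in \eqref{equation:control-p}.} Here the plan is to exploit the $L^2$-duality $(\mc{H}^s_+)^* \cong \mc{H}^s_-$ recalled in \S\ref{ssec:P-R-resonances}. Pointwise unitarity $p_{A_1, A_2}^* p_{A_1, A_2} \equiv \id_{\E}$ gives $|p_{A_1, A_2}|^2_{\End(\E)} = \rk(\E)$ pointwise, whence $\|p_{A_1, A_2}\|^2_{L^2} = \rk(\E)\vol(\M)$ is bounded below by a uniform constant. The same embedding chain yields $\|p_{A_1, A_2}\|_{\mc{H}^s_-} \lesssim \|p_{A_1, A_2}\|_{C^\alpha} = \mc{O}(1)$, so that by duality
\[
\|p_{A_1, A_2}\|_{\mc{H}^s_+} \geq \frac{|\langle p_{A_1, A_2}, p_{A_1, A_2}\rangle_{L^2}|}{\|p_{A_1, A_2}\|_{\mc{H}^s_-}} \geq \frac{1}{C}.
\]

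\emph{Lower bound in \eqref{equation:control-pi-p} --- the main obstacle.} The plan is to show that $q := (\id - \Pi_{A_1, A_2}^+) p_{A_1, A_2}$ is small of order $\mc{O}(\varepsilon^\tau)$ in $\mc{H}^s_+$, after which a triangle inequality combined with the previous lower bound gives the claim for $\varepsilon$ small enough. Since $\Pi_{A_1, A_2}^+$ is a spectral projector for $\X_{A_1, A_2}$, it commutes with $\X_{A_1, A_2}$ and one has $\X_{A_1, A_2} q = (\id - \Pi_{A_1, A_2}^+)\eta$ with $\eta := \X_{A_1, A_2} p_{A_1, A_2}$ satisfying $\|\eta\|_{C^\alpha} = \mc{O}(\varepsilon^\tau)$ by \eqref{eq:corollary-approx-livsic}. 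The composition $(\id - \Pi_{A_1, A_2}^+)(\X_{A_1, A_2} + z)^{-1}$ extends holomorphically in $z$ inside $\gamma$, since the only potential pole at $z = -\lambda_{A_1, A_2}$ is killed by $\id - \Pi_{A_1, A_2}^+$. Cauchy's formula then gives
\[
(\id - \Pi_{A_1, A_2}^+)(\X_{A_1, A_2})^{-1} = \frac{1}{2\pi i}\oint_\gamma \frac{1}{z}(\id - \Pi_{A_1, A_2}^+)(\X_{A_1, A_2} + z)^{-1}\, \dd z,
\]
and the right-hand side is uniformly bounded on $\mc{H}^s_+$ by Lemma \ref{lemma:perturbed-resolvent} together with the uniform bound on $\Pi_{A_1, A_2}^+$. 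Applying this pseudoinverse to the identity $\X_{A_1, A_2} q = (\id - \Pi_{A_1, A_2}^+)\eta$ yields $\|q\|_{\mc{H}^s_+} = \mc{O}(\varepsilon^\tau)$, which for $\varepsilon < \varepsilon_0$ sufficiently small completes the proof.
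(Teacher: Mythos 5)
Your proposal is correct, but it proves the lemma by a genuinely different route than the paper. For the lower bound in \eqref{equation:control-p}, the paper introduces the modified operator $\widetilde{\X}_{A_1,A_2}$ (absorbing $h_{A_1,A_2} := p_{A_1,A_2}^{-1}\X_{A_1,A_2}p_{A_1,A_2}$) so that $p_{A_1,A_2}$ becomes an exact resonant state, then uses smooth dependence of spectral projectors (Lemma \ref{lemma:projector-v}), one-dimensionality of the resonant space, pointwise unitarity and the bilinear estimate of Lemma \ref{lemma:multiplication2} to pin down the scalar relating $p_{A_1,A_2}$ to $\widetilde{\Pi}^+_{A_1,A_2}\id_{\E}$; for \eqref{equation:control-pi-p} it then compares $\Pi^+_{A_1,A_2}$ with $\widetilde{\Pi}^+_{A_1,A_2}$ via the resolvent identity. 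You instead obtain the lower bound in \eqref{equation:control-p} directly from $\|p_{A_1,A_2}\|_{L^2}^2 = \rk(\E)\vol(\M)$ and the continuous extension of the $L^2$-pairing to $\mc{H}^s_+\times\mc{H}^s_-$, and the lower bound in \eqref{equation:control-pi-p} from a reduced-resolvent argument: $(\id-\Pi^+_{A_1,A_2})(\X_{A_1,A_2}+z)^{-1}$ extends holomorphically through the enclosed pole (all Laurent coefficients, including possible nilpotent parts, have range in $\ran\Pi^+_{A_1,A_2}$), its value $S(0)$ is uniformly bounded by the Cauchy formula together with Lemma \ref{lemma:perturbed-resolvent}, and $S(z)(\X_{A_1,A_2}+z)=\id-\Pi^+_{A_1,A_2}$ on the domain gives, letting $z\to 0$, $(\id-\Pi^+_{A_1,A_2})p_{A_1,A_2}=S(0)\,\X_{A_1,A_2}p_{A_1,A_2}=\mc{O}_{\mc{H}^s_+}(\eps^\tau)$ by \eqref{eq:corollary-approx-livsic}. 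Your argument is shorter and more elementary: it avoids $\widetilde{\X}_{A_1,A_2}$, the comparison of projectors, Lemma \ref{lemma:multiplication2}, and even the one-dimensionality of the resonant space (hence the opacity input at this step), relying only on Lemma \ref{lemma:perturbed-resolvent}, the duality pairing, and the standard embeddings $C^\alpha\hookrightarrow\mc{H}^s_\pm$; the paper's route yields in addition the quantitative closeness $\|\Pi^+_{A_1,A_2}-\widetilde{\Pi}^+_{A_1,A_2}\|=\mc{O}(\eps^\tau)$ and the identification of $p_{A_1,A_2}$ with a multiple of $\widetilde{\Pi}^+_{A_1,A_2}\id_{\E}$, but none of that extra structure is needed for the remainder of the proof of Theorem \ref{theorem:main}. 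Two routine points you should spell out when writing this up: the limit $z\to0$ in $S(z)\X_{A_1,A_2}q=(\id-\Pi^+_{A_1,A_2})q-zS(z)q$ (using $q\in\mc{D}^s_+$ and holomorphy of $S$ at $0$), and the fact that the constant from the $\mc{H}^s_+\times\mc{H}^s_-$ pairing enters the duality bound; neither affects uniformity in $(A_1,A_2)$.
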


Before proving the lemma, let us show how it implies the main result. We know that:
\[
	 \mc{O}_{\mc{H}_+}(\eps^{\tau}) = -\Pi_{A_1, A_2}^+ \X_{A_1, A_2} p_{A_1, A_2} = -\X_{A_1, A_2} \Pi_{A_1, A_2}^+ p_{A_1, A_2} = \lambda_{A_1, A_2} \Pi_{A_1, A_2}^+ p_{A_1, A_2},
\]
where in the first equality we used \eqref{eq:corollary-approx-livsic}, as well as the fact that $\|\Pi_{A_1, A_2}^+\|_{\mc{H}^s_+ \to \mc{H}^s_+} = \mc{O}(1)$ is uniformly bounded, and in the third equality we used the fact that $\X_{A_1, A_2}$ and $\Pi_{A_1, A_2}^+$ commute. Therefore, using the lower bound \eqref{equation:control-pi-p}, we obtain that
\begin{equation}\label{eq:small-resonance}
	|\lambda_{A_1, A_2}| = \mc{O}(\eps^{\tau}).
\end{equation}

We let $\phi(A_1, A_2)$ be the unique connection (in the vicinity of $\nabla^{\E}$ in $C^N_*$) such that $\phi(A_1, A_2) - (\nabla^{\E} + A_1) \in \ker (\nabla^{\E} + A_1)^*$, i.e. $\phi(A_1, A_2)$ is in the Coulomb gauge with respect to $\nabla^{\E} + A_1$ (here the superscript denotes the adjoint with respect to natural metrics). That $\phi(A_1, A_2)$ depends smoothly on $(A_1, A_2)$ follows from Lemma \cite[Lemma 4.1]{Cekic-Lefeuvre-22}. Then \cite[Lemma 4.5]{Cekic-Lefeuvre-22} applies (here we use the assumption {\bf (B)}) and gives:
\begin{equation}\label{eq:H-1/2-estimate}
	\|\phi(A_1, A_2) - (\nabla^{\E} + A_1)\|^2_{H^{-1/2}} \leq \mc{O}(1) |\lambda_{A_1, A_2}| = \mc{O}(\eps^{\tau}),
\end{equation}
where we used \eqref{eq:small-resonance} in the last equality. By assumption, we know that $\|\phi(A_1, A_2) - (\nabla^{\E} + A_1)\|_{C^N_*} = \mc{O}(1)$ and thus by interpolation with the estimate \eqref{eq:H-1/2-estimate}, we deduce that $\|\phi(A_1, A_2) - (\nabla^{\E} + A_1)\|_{C^{N - \eta}_*} = \mc{O}(\eps^{\widetilde{\tau}})$, for some other exponent $\widetilde{\tau} \in (0, \tau/2)$ depending on $\eta$. This completes the proof, granted Lemma \ref{lemma:control}.

\begin{proof}[Proof of Lemma \ref{lemma:control}]
	The upper bound of \eqref{equation:control-p} follows directly from \eqref{eq:corollary-approx-livsic} and the embedding $C^\alpha \xhookrightarrow{} \mc{H}^s_+$. Let us prove the lower bound. Define $h_{A_1, A_2} := p_{A_1, A_2}^{-1} \X_{A_1, A_2} p_{A_1, A_2}$. Then $h$ is skew-Hermitian since we may write $h_{A_1, A_2} = p_{A_1, A_2}^*\X_2 - \X_1$ (by using the Leibniz rule) and using the facts that $\X_1$ and $\X_2$ are unitary extensions, and that $p_{A_1, A_2}$ is also unitary. By \eqref{eq:corollary-approx-livsic} it follows that
\begin{equation}\label{eq:corollary-approx-livsic-h}
	\|h_{A_1, A_2}\|_{C^\alpha} = \mc{O}(\varepsilon^{\tau}).
\end{equation}
Hence $p_{A_1, A_2}$ satisfies the equation:
\[
	\widetilde{\X}_{A_1, A_2} p_{A_1, A_2} = 0, \quad \widetilde{\X}_{A_1, A_2}(\bullet) := \X^{\End}(\bullet) + A_2(X) \bullet - \bullet (A_1(X) + h_{A_1, A_2}).
\]
Thus $p_{A_1, A_2}$ is a resonant state at the zero resonance for the perturbed operator $\widetilde{\X}_{A_1, A_2}$ acting on $C^\infty(\M, \pi^*\End(\E))$; the triple $(\End(\E), \End(h), \widetilde{\X}_{A_1, A_2})$ is a linear unitary extension (here $\End(h)$ is the natural inner product on $\End(\E)$ induced by $h$). Using \eqref{eq:corollary-approx-livsic-h} and Lemma \ref{lemma:projector-v}, we conclude that for $\varepsilon > 0$ and $\delta > 0$ small enough the spectral projectors
\[
	\widetilde{\Pi}^{\pm}_{A_1, A_2} = \frac{1}{2\pi i} \oint_{\gamma} (z \pm \widetilde{\X}_{A_1, A_2})^{-1}\, dz
\]
are well-defined, and project onto the one dimensional resonant space $\mathbb{C} p_{A_1, A_2}$ of the unique resonance at $z = 0$ of $\pm \widetilde{\X}_{A_1, A_2}$ enclosed by $\gamma$. Since $(\widetilde{\X}_{A_1, A_2})^* = -\widetilde{\X}_{A_1, A_2}$, we have the relation
\begin{equation*}%\label{eq:projectors-symmetry}
	\left(\widetilde{\Pi}_{A_1, A_2}^+\right)^* = \widetilde{\Pi}_{A_1, A_2}^+ = \widetilde{\Pi}_{A_1, A_2}^-.
\end{equation*}
Let us write $u_{A_1, A_2} := \widetilde{\Pi}_{A_1, A_2}^+ \id_{\E}$. By Taylor expansion, using Lemma \ref{lemma:projector-v} we have
\begin{align}\label{eq:taylor-expansion}
\begin{split}
	&u_{A_1, A_2} := \widetilde{\Pi}_{A_1, A_2}^+ \id_{\E} = \id_{\E} + \mc{O}_{\mc{H}^s_\pm}(\|A_1\|_{C^\alpha} + \|A_2\|_{C^\alpha} + \|h_{A_1, A_2}\|_{C^\alpha})\\ 
	&\hspace{250pt} = \id_{\E} + \mc{O}_{\mc{H}^s_\pm}(\delta) + \mc{O}_{\mc{H}^s_\pm}(\eps^\tau),
\end{split}
\end{align}
where we used \eqref{eq:corollary-approx-livsic-h} in the last equality. Note here that we are not expanding in $(A_1, A_2)$, but in the background variables 
\[\left[C^\alpha(\M, \End(\E))\right]^2 \ni (S_1, S_2) \mapsto \frac{1}{2\pi i} \oint_\gamma (z + \X(S_1, S_2))^{-1}\, dz \in \mc{L}(\mc{H}^s_\pm, \mc{H}^s_\pm),\]
where $\X(S_1, S_2)(\bullet) := \X^{\End}(\bullet) + S_2 \bullet - \bullet S_1$. Write $f_{A_1, A_2} := u_{A_1, A_2} - \id_{\E}$ and $p_{A_1, A_2} = c_{A_1, A_2} u_{A_1, A_2}$, where $c_{A_1, A_2} \in \mathbb{C}$. Since $p_{A_1, A_2}$ is unitary, we have
\begin{equation}\label{eq:p-unitary-identity}
	\id_{\E} = p_{A_1, A_2}^* p_{A_1, A_2} = |c_{A_1, A_2}|^2 (\id_{\E} + \underbrace{f_{A_1, A_2} + f_{A_1, A_2}^* + f_{A_1, A_2}^* f_{A_1, A_2}}_{r :=}).
\end{equation}
Using \eqref{eq:taylor-expansion} and Lemma \ref{lemma:multiplication2}, for $L := \frac{n}{2} + \alpha$ we get that 
\begin{equation}\label{eq:remainder-small}
	\|r\|_{H^{-L}} = \mc{O}(\delta) + \mc{O}(\varepsilon^\tau) + (\mc{O}(\delta) + \mc{O}(\varepsilon^\tau))^2 \leq \frac{1}{2}\|\id_{\E}\|_{H^{-L}},
\end{equation}
for $\delta > 0$ and $\varepsilon > 0$ small enough (depending only on $(\E, h, \nabla^{\E})$). Here we used the fact that $\mc{H}^s_\pm$ can be chosen such that they are invariant under taking adjoints, see \S \ref{ssec:P-R-resonances}. By \eqref{eq:remainder-small} and \eqref{eq:p-unitary-identity}, we get that
\begin{equation*}%\label{eq:c-bounded}
	\frac{1}{2} \leq \frac{\|\id_{\E}\|_{H^{-L}}}{\|\id_{\E}\|_{H^{-L}} + \|r\|_{H^{-L}}} \leq |c_{A_1, A_2}|^2 \leq \frac{\|\id_{\E}\|_{H^{-L}}}{\|\id_{\E}\|_{H^{-L}} - \|r\|_{H^{-L}}} \leq 2.
\end{equation*}
Therefore, we obtain
\begin{equation}\label{eq:p-lower-bound}
	\|p_{A_1, A_2}\|_{\mc{H}^s_+} = |c_{A_1, A_2}| \|u_{A_1, A_2}\|_{\mc{H}^s_+} \geq \frac{1}{\sqrt{2}} (\|\id_{\E}\|_{\mc{H}^s_+} - \mc{O}(\delta) - \mc{O}(\varepsilon^\tau)) \geq \frac{1}{2} \|\id_{\E}\|_{\mc{H}^s_+},
\end{equation}
for $\varepsilon > 0$ and $\delta > 0$ small enough, where in the first inequality we used \eqref{eq:taylor-expansion}. This completes the proof of \eqref{equation:control-p}.

To show the bound \eqref{equation:control-pi-p}, we observe first that we may write using the resolvent identity
\[\Pi_{A_1, A_2}^+ - \widetilde{\Pi}_{A_1, A_2}^+ = -\frac{1}{2\pi i} \oint_\gamma (\X_{A_1, A_2} + z)^{-1} M_{h_{A_1, A_2}} (\widetilde{\X}_{A_1, A_2} + z)^{-1}\, dz,\]
where $M_{h_{A_1, A_2}}$ denotes multiplication on the right by $h_{A_1, A_2}$. By Lemmas \ref{lemma:perturbed-resolvent} and \ref{lemma:multiplication} we get (note that the resolvents make sense as above for $\varepsilon > 0$ and $\delta > 0$ small enough)
\begin{equation}\label{eq:projectors-control}
	\|\Pi_{A_1, A_2}^+ - \widetilde{\Pi}_{A_1, A_2}^+\|_{\mc{H}^s_+ \to \mc{H}^s_+} \leq \mc{O}(1) \|h_{A_1, A_2}\|_{C^\alpha} = \mc{O}(\eps^\tau),
\end{equation}
where in the last estimate we used \eqref{eq:corollary-approx-livsic-h}. We observe the identity
\[\Pi^+_{A_1, A_2} p_{A_1, A_2} = (\Pi^+_{A_1, A_2} - \widetilde{\Pi}_{A_1, A_2}^+) p_{A_1, A_2} + p_{A_1, A_2}.\]
We therefore obtain the bound
\begin{align*}
	&\|\Pi^+_{A_1, A_2} p_{A_1, A_2}\|_{\mc{H}^s_+} \geq \|p_{A_1, A_2}\|_{\mc{H}^s_+} - \|(\Pi^+_{A_1, A_2} - \widetilde{\Pi}_{A_1, A_2}^+) p_{A_1, A_2}\|_{\mc{H}^s_+}\\ 
	&\hspace{230pt}\geq \frac{1}{2}\|\id_{\E}\|_{\mc{H}^s_+} - \mc{O}(\varepsilon^\tau) \geq \frac{1}{4}\|\id_{\E}\|_{\mc{H}^s_+},
\end{align*}
for $\varepsilon > 0$ small enough, where in the second inequality we used \eqref{eq:p-lower-bound}, \eqref{equation:control-p}, and \eqref{eq:projectors-control}. We similarly get
\[\|\Pi^+_{A_1, A_2} p_{A_1, A_2}\|_{\mc{H}^s_+} \leq \|p_{A_1, A_2}\|_{\mc{H}^s_+} + \|(\Pi^+_{A_1, A_2} - \widetilde{\Pi}_{A_1, A_2}^+) p_{A_1, A_2}\|_{\mc{H}^s_+} = \mc{O}(1).\]
By taking $\varepsilon > 0$ small enough, the last two estimates complete the proof of the lemma.
\end{proof}
This completes the proof of the theorem.
\end{proof}

\subsection{Global stability estimates}

Finally, we prove Corollary \ref{corollary}.

\begin{proof}[Proof of Corollary \ref{corollary}]
	Let $\tau > 0$ be the exponent coming from Theorem \ref{theorem:main}. We argue by contradiction: assume there are sequences $(\mathfrak{a}_{1, i})_{i = 1}^\infty, (\mathfrak{a}_{2, i})_{i = 1}^\infty \subset \mathbb{A}_{\E}^N \setminus \mc{U}$ (which is a further subset of $\mathbb{A}_{\E}^{N - \eta} \setminus \mc{U}$), with $d_{C_{*}^N}(\mathfrak{a}_{j, i}, \mathfrak{a}_0) \leq K$ for all $i \in \mathbb{Z}_{\geq 1}$ and $j = 1, 2$, such that
	\begin{equation}\label{eq:contradiction}
		d_{C^{N - \eta}_*}(\mathfrak{a}_{1, i}, \mathfrak{a}_{2, i}) \geq i \left( \sup_{\gamma \in \mc{G}^\sharp} \ell(\gamma)^{-1} \left|\W(\mathfrak{a}_{1, i}, \gamma) - \W(\mathfrak{a}_{2, i}, \gamma)\right| \right)^{\tau}, \quad i \in \mathbb{Z}_{\geq 1}.
	\end{equation}
	Since the inclusion $C^N_* \subset C^{N - \eta}_*$ is compact, by extracting a subsequence we may assume $\mathfrak{a}_{j, i} \to \mathfrak{a}_j$ in $\mathbb{A}_{\E}^{N - \eta}$, for $j = 1, 2$, as $i \to \infty$. By \eqref{eq:contradiction} and the boundedness of its left hand side (by assumption), and continuity of the Wilson loop operator, we get $\mathbf{W}(\mathfrak{a}_1, \gamma) = \mathbf{W}(\mathfrak{a}_2, \gamma)$ for all $\gamma \in \mc{G}^\sharp$. By \cite[Theorem 4.5]{Cekic-Lefeuvre-22b}, this implies $\mathfrak{a}_1 = \mathfrak{a}_2 =: \mathfrak{a}$ (note that in \cite{Cekic-Lefeuvre-22b} we worked with smooth connections, but the proof carries over to the case where $N \gg 1$). Note that $\mathfrak{a} \not \in \mc{U}$ by continuity, so the stability estimates of Theorem \ref{theorem:main} are valid near $\mathfrak{a}$ in $C_*^N$. This contradicts \eqref{eq:contradiction} and completes the proof.
\end{proof}

\bibliographystyle{alpha}
\bibliography{Biblio}
\end{document}